\newcommand{\ZZ}{\mathbb{Z}}
\def\CC{\mathbb{C}}
\newcommand{\QQ}{\mathcal{Q}}
\newcommand{\A}{\mathcal{A}}
\newcommand{\ee}{\varepsilon}
\newcommand{\im}{\textrm{Im}}
\newcommand{\Ker}{\textrm{Ker}}
\DeclareMathOperator{\Rep}{Rep}
\newcommand{\degg}{\leq_{\deg}}
\DeclareMathOperator{\GL}{GL}
\DeclareMathOperator{\G}{G}
\DeclareMathOperator{\OO}{O}
\DeclareMathOperator{\SP}{SP}
\DeclareMathOperator{\ind}{ind}
\DeclareMathOperator{\Aut}{Aut}
\DeclareMathOperator{\Quot}{Quot}
\DeclareMathOperator{\Hom}{Hom}
\DeclareMathOperator{\Lie}{Lie}
\DeclareMathOperator{\End}{End}
\DeclareMathOperator{\Ext}{Ext}
\DeclareMathOperator{\Stab}{Stab}
\DeclareMathOperator{\coker}{coker}
\DeclareMathOperator{\rad}{rad}
\newcommand{\dimv}{{\bf dim}\,}
\newcommand{\form}{\langle-,-\rangle}
\theoremstyle{plain}
\numberwithin{equation}{section}
\newtheorem{thm}{Theorem}[section]
\newtheorem*{mainthm}{Theorem 7.1}
\newtheorem{prop}[thm]{Proposition}
\newtheorem{lem}[thm]{Lemma}
\newtheorem{cor}[thm]{Corollary}
\theoremstyle{remark}
\newtheorem{rem}[thm]{Remark}
\newtheorem{definition}[thm]{Definition}
\newtheorem{example}[thm]{Example}
\newtheorem{question}{Question}[section]
\begin{document}

\title[Degenerations for symmetric quivers]{On degenerations and extensions\\ of symplectic and orthogonal\\ quiver representations}
\author{Magdalena Boos, Giovanni Cerulli Irelli}
\address{Magdalena Boos:\newline Ruhr University Bochum, Faculty of Mathematics,  44780 Bochum, (Germany).} \email{magdalena.boos-math@rub.de}
\address{Giovanni Cerulli Irelli:\newline
Department S.B.A.I.. Sapienza-Universit\`a di Roma. Via Scarpa 10, 00164, Roma (Italy)}
\email{giovanni.cerulliirelli@uniroma1.it}
\begin{abstract}
We discuss degenerations of symplectic and orthogonal representations of symmetric quivers and algebras with self-dualities. As in the non-symmetric case, we define a partial ordering, that we call symmetric Ext-order which gives a sufficient criterion for a symmetric degeneration. Then a detailed discussion of type A  quivers and their (symmetric) representation theory via Auslander-Reiten theory leads to our main theorem which 
states that the symmetric degeneration order of a symmetric quiver of finite type
is induced by the "usual" degeneration order between representations of the underlying
quiver.
\end{abstract}
\maketitle
\tableofcontents

\newpage
\section{Introduction}\label{Sec:Intro}

\noindent Let $X$ be a complex algebraic variety acted upon by a group $\G$. Suppose that the pair $(\G,X)$ satisfies the following: 
\begin{enumerate}[label=(\arabic*)]
\item both  $\G$ and $X$ are equipped with involutions $\rho:\G\rightarrow \G$, $g\mapsto g^\rho$ and $\Delta: X\rightarrow X$, $x\mapsto \Delta x$ such that $\Delta(g\cdot \Delta x)=g^\rho\cdot x$. Denote by $\G^\rho\subset \G$ and $X^\Delta\subset X$ the sets of fixed points;
\item the group $\G$ is a subgroup of the group of invertible elements $E^\times$ of a finite-dimensional associative algebra $E$ over $\CC$;
\item the anti-involution of $\G$ given by $g\mapsto g^\ast:=(g^\rho)^{-1}$ extends to a $\CC$--linear anti-involution $f\mapsto f^\ast$ on the algebra $E$;
\item for every fixed point $x\in X^\Delta$, its stabilizer $H=\textrm{Stab}_{\G}(x)$ is the group of invertible elements of its linear span $\textrm{Span}_\CC(H)\subset E$.
\end{enumerate}
Then Magyar, Weyman and Zelevinsky \cite[Section~2.1]{MWZ} (generalizing a result of Richardson \cite{R} and one of Derksen and Weyman \cite[Theorem~2.6]{DW}) prove that for every fixed point $x\in X^\Delta$
\begin{equation}\label{Eq:MWZThm}
\G x\cap X^\Delta=\G^\rho x.
\end{equation}
Equation~\eqref{Eq:MWZThm} means that the orbit of a fixed point by the ``small'' group $\G^\rho$ is  the intersection of its orbit by the ``big'' group $G$ with the set of fixed points. It is then natural to ask if the same happens for the Zariski orbit closures:
\begin{question}\label{General Question}
Is it true that $\overline{\G x}\cap X^\Delta=\overline{\G^\rho x}$, for every $x\in X^\Delta$ ?
\end{question}
\begin{example}\label{Ex:ClassicalIntro}
Let $X\subset \textrm{Mat}_n(\CC)$ be the variety of $n\times n$ nilpotent complex matrices and $\G=\GL_n$ acting on $X$ by conjugation. Let $\Delta$ and $\rho$ be the involutions on $X$ and $\G$ such that $\G^\rho=SP_n$ is the symplectic group (or $\G^\rho=\OO_n$ is the orthogonal group) and $X^\Delta\subset\mathfrak{sp}_n=\Lie(\SP_n)$ (or $X^\Delta\subset\mathfrak{o}_n=\Lie(\OO_n)$) is the nilpotent cone. It is straightforward to check that the hypotheses (1)--(4) are satisfied for the pair $(X,\G)$.
By classical work of Freudenthal, Gerstenhaber and Hesselink (compare with \cite[Proposition~2.1]{KP}) it is known that the answer to Question~\ref{General Question} is positive in this case. 
\end{example}
\noindent
A pair $(X,\G)$ as above can be found in the context of representation theory of  finite dimensional algebras endowed with a self-duality (see Section~\ref{Sec:RelationWithMWZ}) and we give another proof of Magyar, Weyman and Zelevinsky's result  in this specific context in Section~\ref{Sec:IsoClasses}. We are aware of  an example arising in this context where the answer to Question~\ref{General Question} is negative \cite{BC}. In this paper we provide a positive answer to Question~\ref{General Question} in case of \emph{symmetric quiver algebras of finite representation type}.
\\[1ex]\noindent
Let us explain the context, the results and our motivations to seek Question~\ref{General Question}.

\subsubsection*{Quiver algebras}
Let $\mathrm{k}=\CC$ be the field of complex numbers. A \emph{quiver} $\QQ$ is a finite oriented graph given by a quadruple $\QQ=(\QQ_0,\QQ_1,s,t)$ where $\QQ_0$ denotes the finite set of  \emph{vertices} of $\QQ$, $\QQ_1$ denotes the finite set of  \emph{edges} and $s,t:\QQ_1\rightarrow \QQ_0$ are two functions that provide the orientation $\alpha: s(\alpha) \rightarrow t(\alpha)$ of the edges.  For simplicity of notation we consider the elements of $\QQ_1$ as oriented edges or arrows.  A \emph{path} in $\QQ$ is defined to be a sequence of arrows $\omega=\alpha_s \cdots\alpha_1$, such that $t(\alpha_{k})=s(\alpha_{k+1})$ for all $k$; formally we include a path $\ee_i$ of length zero for each $i\in \QQ_0$ starting and ending in $i$. The \emph{path algebra} $\mathrm{k}\QQ$ of $\QQ$ is defined to be the $\mathrm{k}$-vector space with a basis given by the set of all paths in $\QQ$. The multiplication of two paths is defined by concatenation of paths. Let $R$ be the \emph{arrow ideal} of $\mathrm{k}\QQ$ which is the (two-sided) ideal generated by the arrows of $\QQ$. Let $I\subseteq \mathrm{k}\QQ$  be an ideal such that there is an integer $s$ with $R^s\subseteq I\subseteq R^2$ (i.e. it is \emph{admissible}). The quotient algebra $\A:=\mathrm{k}\QQ/I$ is finite-dimensional and associative, and we refer to it as a \emph{quiver algebra} (standard references are  \cite{ARS, ASS, CB, Ringel}). 
\subsubsection*{Symmetric quiver algebras}
A  \emph{symmetric quiver} as defined in \cite{DW}  is a pair $(\QQ,\sigma)$ where $\QQ$ is a finite quiver and $\sigma:Q\rightarrow Q^{op}$ is an involutive isomorphism of $\QQ$ with its opposite $\QQ^{op}$, i.e. it is an involution of $\QQ_0$ and of $\QQ_1$, reversing the orientation of arrows. Notice that $\sigma$ is defined on the underlying graph of $\QQ$ and once this is fixed we say that $\QQ$ and $\sigma$ are \emph{compatible}.  For example, let us consider the Dynkin diagram of type $A_3$, $\xymatrix@1@C=15pt{1\ar@{-}|\alpha[r]& 2\ar@{-}|\beta[r]& 3}$, and let $\sigma$ be its non-trivial involution given by $\sigma(1)=3$, $\sigma(2)=2$, and $\sigma(\alpha)=\beta$. Let $\xymatrix@1@C=15pt{\QQ(1)=1\ar|(.7)\alpha[r]& 2\ar|\beta[r]& 3}$ and $\xymatrix@1@C=15pt{\QQ(2)=1\ar|(.7)\alpha[r]& 2&\ar|\beta[l] 3}$ be two quivers of type $A_3$. Then $\QQ(1)$ is compatible with $\sigma$, i.e. $(\QQ(1),\sigma)$ is a symmetric quiver, but $\QQ(2)$ is not compatible with $\sigma$. Actually, $\QQ(2)$ cannot be endowed with the structure of a symmetric quiver, since $\sigma$ is the only non-trivial involution of its underlying graph.  

Let $(\QQ,\sigma)$ be a symmetric quiver. For every admissible ideal $I\subset \mathrm{k}\QQ$ such that $\sigma(I)= I$, the quiver algebra $\A=\mathrm{k}\QQ/I$ is isomorphic (via $\sigma$) to its opposite  $\A^{op}=\mathrm{k}\QQ^{op}/\sigma(I)$. The pair $(\A,\sigma)$ is called a \emph{symmetric quiver algebra}.

\subsubsection*{(Symmetric) representation theory}
The representation theory of symmetric quivers or \emph{generalized quivers with dimension vectors} was developed by Derksen and Weyman in \cite{DW} and it has strong connections with the theory of symmetric spaces and $\theta$-groups of Vinberg \cite{V} and the results of Kac \cite{K1,K2}, as explained in \cite{DW} and \cite{Shm}. 
We recall here a few facts about it. Let $(\mathcal{A},\sigma)$ be a symmetric quiver algebra. An $\A$-module or an $\A$-representation is a representation of the quiver $\QQ$ satisfying the relations from $I$. Thus, a representation is a pair  $M=(V,f)$ where $V=\oplus_{i\in \QQ_0}V_i$ is a finite dimensional $\QQ_0$-graded vector space and $f=(f_\alpha:V_{s(\alpha)}\rightarrow V_{t(\alpha)})_{\alpha\in \QQ_1}$ is a collection of linear maps such that $f_{\pi}=0$ for every $\pi\in I$. The vector space $V$ is called the \emph{underlying vector space} of the representation $M$ and its graded dimension $\mathbf{d}=\dimv V=(\dim V_i)_{i\in \QQ_0}$ is called the \emph{dimension vector} of $M$ or $V$. Notice that $V$ is itself an $\A$-module, by choosing all the linear maps to be zero; it is called the semisimple module of dimension vector $\mathbf{d}$. A morphism of $\A$-modules $h:(V,f)\rightarrow (W,g)$ is a collection of linear maps $(h_i:V_i\rightarrow W_i)_{i\in \QQ_0}$ such that $h_{t(\alpha)}\circ f_{\alpha}=g_\alpha\circ h_{s(\alpha)}$. We denote by $\textrm{Rep}(\A)$ the category of finite dimensional representations of $\A$. Let $V=\oplus_{i\in \QQ_0}V_i$ be a $\QQ_0$-graded complex vector space 
of dimension vector $\mathbf{d}$. We denote by $R(\A,V)$ the variety of $\A$-modules having $V$ as underlying vector space.  Thus
$$
R(\A,V)\subseteq R(\mathrm{k}\QQ,V) := \!\!\!\!\bigoplus_{\alpha:i\rightarrow j\in \QQ_1}\Hom_\mathrm{k}(V_i,V_j).
$$
We denote by $\GL^\bullet(V):=\prod_{i\in \QQ_0} \textrm{GL}(V_i)$ the group of graded automorphisms of $V$. We sometimes think of $\GL^\bullet(V)$ as embedded into $\GL(V)$ as diagonal block matrices (see \cite[Section~1]{DW}). The group $\GL^\bullet(V)$ acts on $R(\A,V)$ by change of basis, i.e. given $g=(g_i)_{i\in\QQ_0}\in \GL^\bullet(V)$ and $M=(M_\alpha)_{\alpha\in \QQ_1}\in R(\A,V)$ the representation $g\cdot M$ is defined by $(g\cdot M)_\alpha=g_{t(\alpha)}\circ M_\alpha\circ g_{s(\alpha)}^{-1}$. The $\GL^\bullet(V)$-orbits are the  isomorphism classes of $\A$-representations with underlying vector space $V$. 

Let $\ee$ be $+1$ or $-1$. We say that
a bilinear form $\langle-,-\rangle:V\times V\rightarrow \CC$ on $V$ is a  \emph{$\sigma$-compatible $\ee$-form} on $V$ if: 
\begin{enumerate}
\item the form $\form$ is non-degenerate;
\item the form $\form$ is \emph{compatible with $\sigma$}, i.e. $\langle-,-\rangle|_{V_i\times V_j}=0$ if $j\neq \sigma(i)$; 
\item the form $\form$ is an \emph{$\ee$-form}: i.e. $\langle v,w\rangle=\ee\langle w,v\rangle$ for every $v,w\in V$.
\end{enumerate}
The pair $(V,\form)$ is called an \emph{$\ee$-quadratic space for $(\QQ,\sigma)$ or for $(\A,\sigma)$}. Notice that the dimension vector of $V$ is \emph{$\sigma$-symmetric} in this case, i.e. $\mathbf{d}_{\sigma(i)}=\mathbf{d}_i$ for every $i\in\QQ_0$. Indeed, by (ii), the  non-degenerate bilinear form $\form$ induces an isomorphism between $V_i$ and the linear dual of $V_{\sigma(i)}$.  

Every endomorphism $f$ of $V$ has a unique \emph{adjoint} $f^\star$ with respect to the non-degenerate bilinear form $\form$ defined by the condition $\langle f(v),w\rangle=\langle v,f^\star(w)\rangle$, for all $v,w,\in V$. 
We denote by $\G(V,\form)=\{g\in \GL(V)| g=(g^{\star})^{-1}\}$ the group of isometries of $(V,\langle-,-\rangle)$. 

\noindent
A point $M\in R(\A,V)$ can be considered as  an endomorphism of $V$. Following \cite{DW}, we say that  $M\in R(\A,V)$ is an \emph{$\ee$-representation} of $(\A,\sigma)$ with respect to $(V,\form)$ if 
\begin{itemize}
\item[(iv)] $M^\star+M=0$.
\end{itemize}
Condition (iv) means that $M$ lies in the Lie algebra of the group  $\G(V,\form)$ and condition (ii) implies that $M_\alpha^\star=-M_{\sigma(\alpha)}$, i.e. $\langle M_\alpha(v),w\rangle+\langle v, M_{\sigma({\alpha})}(w)\rangle=0$ for every arrow $\alpha:i\rightarrow j$, $v\in V_i$ and $w\in V_{\sigma(j)}$.   For $\ee=+1$, $M$ is hence called an \emph{orthogonal} representation of $(\A,\sigma)$ and for $\ee=-1$, it is called a \emph{symplectic} representation of $(\A,\sigma)$. 
\\[1ex]
We denote by 
$
R(\A,V)^{\form, \ee}=\{M\in R(\A,V)|\, M^\star+M=0\}
$
the variety of $\ee$-representations of $\A$ with respect to $(V,\form)$. We denote by   
$\G^\bullet(V,\form):=\G(V,\form)\cap \GL^\bullet(V)$ the group of graded isometries of $(V,\form)$. Thus $g=(g_i)\in \GL^\bullet(V)$ belongs to $\G^\bullet(V,\form)$ if and only if $g_{\sigma(i)}=(g_i^{\star})^{-1}$. The action of $\GL^\bullet(V)$ on $R(\A,V)$  induces an action of $\G^\bullet(V,\form)$ on $R(\A,V)^{\form,\ee}$ by change of basis (see Section~\ref{Sec:RelationWithMWZ}). 

\begin{example}\label{Ex:A2Quiver} [Type $(A_2,\ee)$]. Let $R=\mathrm{Mat}_{n\times n}(\CC)$ be the vector space of complex $n\times n$ matrices. 
For $\ee=-1$ let  $R^{\ee}=Sym_n$ denote the subspace of symmetric matrices and for $\ee=1$ let $R^{\ee}=ASym_n$ denote the space of anti-symmetric matrices. Let $V=\CC^n\oplus \CC^n$. We view $R\subset \End(V)$  as $\left(\begin{array}{cc}{\scriptstyle 0}&{\scriptstyle 0}\\ {\scriptstyle R}&{\scriptstyle 0}\end{array}\right)$ so that the elements of $R$ are considered as linear maps from the first copy of $\CC^n$ to the second one. 
Let $\xymatrix@1@C=15pt{\QQ=1\ar[r]& 2}$ be a quiver of type $A_2$ and let $\sigma$ be the anti-involution on $\QQ$.  Then $R$ can be seen as the representation variety $R_\mathbf{(n,n)}(\CC\QQ)$. On $V$ we define the $\sigma$-compatible $\ee$-form $\langle v,w\rangle:=v^tJw$ where $J=\left(\begin{array}{cc}{\scriptstyle 0}&{\scriptstyle \ee\mathbf{1}_n}\\ {\scriptstyle\mathbf{1}_n}&{\scriptstyle 0}\end{array}\right)$. Then, a short calculation shows that the variety of $\ee$-representations of $\QQ$ with respect to $(V,\form)$ is $R^\ee=\{A\in R| A^t+\ee A=0\}$ and thus we recover the two subspaces defined above. The group $\GL^\bullet(V)=\GL_n\times \GL_n$ acts on $R$ by base change $((g_1,g_2),A)\mapsto g_2Ag_1^{-1}$, and the group $\G^\bullet(V,\form)=\{(g_1,g_2)|\, g_1=(g_2^t)^{-1}\}\simeq \GL_n$ acts on $R^\ee$ by congruence $(g_2,A)\mapsto g_2Ag_2^t$. 
Two $n\times n$ matrices are in the same $\GL^\bullet(V)$-orbit if and only if they have the same rank. The theorem of Magyar, Weyman and Zelevinky applies here (see Section~\ref{Sec:RelationWithMWZ}) and states the well-known fact that two symmetric (or anti-symmetric) matrices are congruent if and only if they have the same rank. 
\end{example}

\begin{example}\label{Ex:A3Quiver}[Type $(A_3,\ee)$] Let $U$ be a finite dimensional complex vector space and let $U^\ast$ denote its linear dual. Let $\ee$ be $+1$ or $-1$ and let $(W,\form)$ be an $\ee$-quadratic vector space. The isometry group $\G(W,\form)$ equals $\SP(W)$  if $\ee=-1$ and $\OO(W)$ if $\ee=1$.  Let $R=\Hom(U,W)\oplus \Hom(W,U^\ast)$. Then $R$ is a representation variety for the quiver $\xymatrix@1@C=15pt{\QQ=1\ar|(.6){\alpha}[r]& 2\ar|{\beta}[r]&3}$ of type $A_3$ and we denote its elements as $(f_\alpha,f_\beta)$. Given $f_\alpha\in \Hom(U,W)$ we denote by $f_\alpha^\star\in \Hom(W,U^\ast)$ the linear map defined by $f_\alpha^\star(w)(u)=\langle f_\alpha(u),w\rangle$ for all $w\in W$ and $u\in U$. Let $R^\ee=\{(f_\alpha,f_\alpha^\star)\}\subset R$ be the subspace of fixed points for the involution $(f_\alpha, f_\beta)\mapsto (f_\beta^\star,f_\alpha^\star)$. It is easy to see that the fixed point set $R^\ee$ can be realized as a variety of $\ee$-representations for the symmetric quiver $(\QQ,\sigma)$.

Let us consider the group $\G=\GL(U)\times \GL(W)\times \GL(U^\ast)$. As usual, for an endomorphism $h$ of $U$ we denote by $h^\ast$ the endomorphism of $U^\ast$ given by $h^\ast(g)(u)=g(h(u))$ for all $g\in U^\ast$ and $u\in U$.  Similarly, for an endomorphism $h$ of $W$ we denote by $h^\star$ its adjoint with respect to $\form$ which is the unique endomorphism of $W$ such that $\langle h(w_1),w_2 \rangle=\langle w_1, h^\star(w_2)\rangle$ for every $w_1,w_2\in W$. Let us consider the involution on $\G$ given by $(g_1,g_2,g_3)\mapsto ((g_3^{\ast})^{-1}, (g_2^\star)^{-1}, (g_1^{\ast})^{-1})$ and let $\G^\ee$ be the subgroup of fixed points. We notice that $\G^\ee\simeq \GL(U)\times \G(W,\form)$. 

The group $\G$ acts on $R$ by change of basis, i.e. given $g=(g_1,g_2,g_3)$ and $f=(f_\alpha, f_\beta)\in R$, $g\cdot f=(g_2 f_\alpha g_1^{-1}, g_3 f_\beta g_2^{-1})$. It is well-known that the only invariants for this action are $\underline{rk}(f)=(rk(f_\alpha), rk(f_\beta), rk(f_\beta\circ f_\alpha))$ and thus $f'=(f'_\alpha,f'_\beta)$ lies in the $\G$-orbit of $f=(f_\alpha,f_\beta)$ if and only if $\underline{rk}(f)=\underline{rk}(f')$.

The action of $\G$ on $R$ restricts to an action of $\G^\ee$ on $R^\ee$. 
We notice that for a point $f=(f_\alpha,f_\alpha^\star)$ in $R^\ee$, the rank of $f_\alpha^\star$ is equal to the rank of $f_\alpha$ and the rank of $f_\alpha^\star\circ f_\alpha$ is the rank of the form on $W$ restricted to the image of $f_\alpha$, and thus these two ranks are preserved by the action of $\G^\ee$ on $R^\ee$.

The theorem of Magyar, Weyman and Zelevinsky applies to this example (see Section~\ref{Sec:RelationWithMWZ}) and states that two points $f=(f_\alpha, f_\alpha^\star)$ and $f'=(f'_\alpha,(f'_\alpha)^\star)$ in $R^\ee$ are in the same $\G^\ee$-orbit if and only if $rk(f_\alpha)=rk (f'_\alpha)$ and the rank of the form restricted to the image of $f_\alpha$ is equal to the rank of the form restricted to the image of $f'_\alpha$. 
\end{example}

\subsubsection*{(Symmetric) degenerations and orderings}
For  $M,N\in R(\A,V)$ we denote $M\degg N$ if and only if $N\in\overline{\GL^\bullet(V) M}$ and we say that \emph{$M$ degenerates to $N$} or that \emph{$N$ is a degeneration of $M$}. For $\ee=+1$ or $-1$ and two $\ee$-representations $M,N\in R(\A,V)^{\form,\ee}$, we  denote $M\degg^\ee N$ if and only if $N\in \overline{\G^\bullet(V,\form) M}$ and say that \emph{$M$ degenerates symmetrically to $N$} or that \emph{$N$ is a symmetric degeneration of $M$}. In Section~\ref{Sec:RelationWithMWZ} we show that the theorem of Magyar, Weyman and Zelevinsky applies to this situation and a positive answer to Question~\ref{General Question} means that $\degg$  and $\degg^ \ee$ coincide on $R(\A,V)^{\form,\ee}$.  In Section~\ref{Sec:IsoDegs} we define a third order that we call \emph{symmetric Ext-order}, denoted by $\leq_{\textrm{Ext}}^\ee$ (see Definition~\ref{Def:ExtSymmOrder}).  We obtain Corollary~\ref{cor:ExtDeg}, 
which states that 
$\xymatrix{\leq_{\Ext}^\ee\ar@{=>}[r]&\leq_{\deg}^\ee}.$

\subsubsection*{Algebras of finite representation type} Question~\ref{General Question} is particularly interesting when the orbit closures can be described by a finite number of invariants. In the context of symmetric representation varieties, this requirement is fullfilled by an algebra $\A$ which is of finite representation type. Indeed, in this case, Zwara \cite{Zwara}  proved that the degeneration order $\degg$  is equivalent to the so-called \emph{Hom-order} (see Section~\ref{Subsec:DegHomExt}). This implies that the orbit closures in $X$ are described by checking a finite number of inequalities  which is independent on $X$ and $G$.  Even in this special case, the answer to Question~\ref{General Question} is negative  \cite{BC}.   It is then an open problem to find conditions on $(X,G)$ or on the algebra $\A$ such that the answer to Question~\ref{General Question} is positive. In this paper we investigate this problem in the case of a symmetric hereditary quiver algebras of finite representation type. These algebras are the path algebras of symmetric quivers of finite type. A symmetric quiver is called of \emph{finite type} if it admits only a finite number of \emph{indecomposable $\ee$--representations} (see Section~\ref{Sec:IndecEpsilon} for a description of indecomposable $\ee$-representations). In \cite[Proposition~3.3]{DW} it is shown that a (connected) symmetric quiver is of finite type if and only if  it is a symmetric orientation of a Dynkin diagram of type $A$. We hence restrict ourselves to this situation. 

For $n\geq 2$, we denote by $A_n$ the Dynkin diagram of type $A_n$:
with the following labelling of vertices and edges:
$$
\xymatrix{
A_n:&1\ar@{-}^{e_1}[r]&2\ar@{-}^{e_2}[r]&\cdots\ar@{-}^{e_{n-2}}[r]&n-1\ar@{-}^(.6){e_{n-1}}[r]&n
.}
$$

Let $\sigma$ be the non-trivial automorphism of $A_n$ (i.e. $\sigma(i)=n+1-i$ and $\sigma(\alpha_{i})=\alpha_{n-i}$). In case $n$ is odd there is a unique $\sigma$-fixed vertex and no $\sigma$-fixed arrow; if $n$ is even there is a unique $\sigma$-fixed edge and no $\sigma$-fixed vertex. We distinguish the two cases by writing $A_{odd}$ and $A_{even}$. Let $(\QQ,\sigma)$ be  a symmetric quiver of type $A_n$. A typical example is the equioriented quiver  that we denote by

$$
\xymatrix{
\stackrel{\rightarrow}{A_n}:&1\ar[r]&2\ar
[r]&\cdots\ar[r]&n-1\ar[r]&n.
}
$$
Other examples are the alternating quivers with an even number of vertices (having $\alpha$ as $\sigma$-fixed arrow)
$$
\xymatrix@!C=6pt@!R=6pt{
&2\ar[dl]\ar[dr]&&4\ar[dl]\ar[dr]&&\ar[dl]\cdots\ar[dr]&&n+1\ar_\alpha[dl]\ar[dr]&&\ar[dl]\cdots\ar[dr]&&2n\ar[dl]\\
1&&3&&5&\cdots&n&&n+2&\cdots&2n-1&
}
$$
and those with an odd number of vertices (having $n+1$ as $\sigma$-fixed vertex):
$$
\xymatrix@C=3pt@!R=5pt{
&2\ar[dl]\ar[dr]&&\cdots\ar[dr]\ar[dl]&&n\ar[dr]\ar[dl]&& && && &\\
1&&3&&n-1&&n+1\ar[dr]& &n+3\ar[dr]\ar[dl]& &2n-1\ar[dr]\ar[dl]& &2n+1\ar[dl]\\
&&&&&&&n+2&&\cdots &&2n &
}
$$

\subsubsection{Main result}  The following is the main result of the paper: 
\begin{mainthm}
Let $(Q,\sigma)$ be a symmetric quiver of  type $A$. Let $\ee$ be $+1$ or $-1$.
Then on the $\ee$-representations we have
\begin{equation}\label{Eq:MainIntro}
\xymatrix{\leq_{\Ext}^\ee\ar@{<=>}[r]&\leq_{\deg}^\ee\ar@{<=>}[r]&\degg}
\end{equation}
\end{mainthm}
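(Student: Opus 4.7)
The chain of implications $\leq_{\Ext}^\ee \Longrightarrow \leq_{\deg}^\ee \Longrightarrow \degg$ is already in hand: the first arrow is Corollary~\ref{cor:ExtDeg}, while the second is immediate from the inclusion of groups $\G^\bullet(V,\form)\subseteq \GL^\bullet(V)$, which forces every $\G^\bullet$-orbit closure to sit inside the corresponding $\GL^\bullet$-orbit closure intersected with $R(\A,V)^{\form,\ee}$. To close the triangle it is enough to prove the single implication $\degg \Longrightarrow \leq_{\Ext}^\ee$ on the $\ee$-representations of $(\QQ,\sigma)$, which thus absorbs both the content of Question~\ref{General Question} for this class of algebras and the equivalence of all three orderings.

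The plan is to invoke Bongartz's theorem for the underlying Dynkin type $A$ quiver: any ordinary degeneration $M \degg N$ is witnessed by a sequence $M = M_0, M_1, \ldots, M_s = N$ in which each step is produced by a non-split short exact sequence $0 \to U_k \to X_k \to V_k \to 0$ with $M_k \cong M_k' \oplus U_k \oplus V_k$ and $M_{k+1} \cong M_k' \oplus X_k$; in type $A$ these elementary moves have a transparent combinatorial description in terms of interval indecomposables on the Auslander-Reiten quiver. The core of the argument is to upgrade such an ordinary chain into a chain of symmetric Ext moves. Each elementary step has a natural mirror $0 \to V_k^* \to X_k^* \to U_k^* \to 0$ obtained by dualising with respect to $\form$; combining the two yields the $\sigma$-symmetric sequence
\[
0 \longrightarrow U_k \oplus V_k^* \longrightarrow X_k \oplus X_k^* \longrightarrow V_k \oplus U_k^* \longrightarrow 0
\]
whose middle term is a hyperbolic $\ee$-module and which consequently represents a legitimate $\ee$-Ext move. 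In the lucky case where $U_k$, $V_k$, $X_k$ are already $\sigma$-self-dual, a single symmetric sequence suffices.

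The principal obstacle is that the intermediate modules $M_k$ in the Bongartz chain are not a priori $\ee$-representations, so the doubling construction above does not directly yield a valid $\ee$-Ext chain inside $R(\A,V)^{\form,\ee}$. I would circumvent this by passing to a $\sigma$-symmetric refinement of the chain -- concatenating it with its $\sigma$-image and invoking cancellation arguments to bring the intermediates back into the $\ee$-representation locus -- exploiting the combinatorial structure of the Auslander-Reiten quiver of $A_n$ on which $\sigma$ acts as a reflection. The most delicate step is when a self-dual indecomposable appears in the middle term of an elementary move: here it must be verified, using the explicit classification of self-dual indecomposable $\ee$-representations from Section~\ref{Sec:IndecEpsilon}, that the required $\ee$-form actually exists on $X_k$ and gives an admissible $\ee$-Ext step. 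This check splits naturally along the dichotomy between $A_{odd}$ (unique $\sigma$-fixed vertex) and $A_{even}$ (unique $\sigma$-fixed arrow) and, given the explicit interval combinatorics, should be reducible to a finite case analysis.
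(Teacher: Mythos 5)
Your description of the easy implications and of the target implication $\degg \Rightarrow \leq_{\Ext}^\ee$ is correct, and you rightly anticipate that Bongartz's cancellation theorem and the $\nabla$-action on the Auslander--Reiten quiver of $A_n$ will be the main tools. But the mechanism you propose for the hard direction -- ``upgrade'' a Bongartz chain of ordinary degenerations into a chain of $\ee$-Ext moves -- has a genuine gap, and it is not the route the paper takes.

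The first problem is with the hyperbolic doubling. Given an ordinary elementary move governed by $0\to U_k\to X_k\to V_k\to 0$, your proposed symmetric sequence $0\to U_k\oplus V_k^*\to X_k\oplus X_k^*\to V_k\oplus U_k^*\to 0$ lives on a hyperbolic $\ee$-module of \emph{twice} the dimension vector. A chain built out of such doubled moves produces $\ee$-representations of dimension $2\mathbf{d}$, not $\mathbf{d}$, and there is no cancellation lemma that lets you ``halve'' a degeneration in $R_{2\mathbf{d}}^\ee$ back down to one in $R_{\mathbf{d}}^\ee$. The second and more basic problem is that the definition of $\leq_{\Ext}^\ee$ requires every intermediate stage $M_i$ to itself be an $\ee$-representation, with $L_i$ \emph{isotropic inside} $M_{i-1}$; the intermediates of a Bongartz chain from $M$ to $N$ are in general not $\ee$-representations, and ``concatenating with the $\sigma$-image and invoking cancellation'' is not a construction -- $\nabla M_i$ is still not an $\ee$-representation, and there is no guarantee that $M_i$ and $\nabla M_i$ fit into a common $\ee$-chain of the right dimension. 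A further, smaller point: the relevant dichotomy is not $A_{odd}$ versus $A_{even}$ but split versus non-split type, which depends on the pair $(\QQ,\ee)$; e.g. $(A_{odd},-1)$ and $(A_{even},+1)$ behave alike.

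The paper sidesteps these obstacles entirely by \emph{never} symmetrizing a pre-existing chain: it constructs a fresh $\ee$-Ext chain from scratch, by induction on dimension, mimicking the proof of Corollary~\ref{Cor:EquivalenceOrdersDynkin}. Concretely, one picks a $\preceq$-minimal indecomposable direct summand $L$ of $N$ (so that $[L,N]^1=0$, hence $[L,M]=[L,N]$), invokes Bongartz's theorem to embed $L$ generically into $M$, and then -- this is the crux -- shows that this generic embedding can be made \emph{isotropic} (automatic in the split types by Lemma~\ref{Lem:IsoSplitTypes}; in the non-split types this is Proposition~\ref{Prop:KeyLemma}, whose hypotheses are checked via further applications of Bongartz's theorem). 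The generic $\ee$-subquotient $Y=\iota(L)^\perp/\iota(L)$ is then identified explicitly (Proposition~\ref{Prop:ESubquot}), and one proves $Y\degg X$ where $N\simeq L\oplus\nabla L\oplus X$; in the split types this last step is not immediate from Bongartz and needs a dedicated parity argument involving the additive function $\delta_{M,N}$ on $\delta$-fixed indecomposables (Proposition~\ref{prop:DeltaEven}). Induction then gives $M\leq_{\Ext}^\ee L\oplus \nabla L\oplus Y\leq_{\Ext}^\ee L\oplus\nabla L\oplus X=N$. So the essential new ingredients -- generic isotropic embeddings, generic $\ee$-subquotients, and the $\delta$-parity lemma -- are exactly the points your proposal either bypasses or leaves as ``to be verified''.
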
 
\noindent 
The implication $\xymatrix@1{\leq_{\Ext}^\ee\ar@{=>}[r]&\leq_{\deg}^\ee}$ is proved in Corollary~\ref{Sec:RepTheoryTypeA} for any symmetric quiver algebra. The implication $\xymatrix@1{\degg\ar@{=>}[r]&\leq_{\Ext}^\ee}$ is the most surprising and it is proved in Section~\ref{Sec:MainResult} (see Theorem~\ref{Thm:MainThm}). The proof is constructive in the sense that given two $\ee$-representations $M,N\in R(\QQ,V)^{\form,\ee}$ such that $M\degg N$ it shows how to  inductively find a sequence of $\ee$-representations $M=M(0), M(1),\cdots, M(k)=N$  such that there is a one-parameter subgroup $\lambda_i(t)\in \G^\bullet(V,\form)$ which fullfills $\lim_{t\rightarrow 0}\lambda_i(t)\cdot M(i)=M(i+1)$ for every $i$ (see Section~\ref{Sec:Example} for examples). This strategy is inspired by a variation of Bongartz's proof of the classical implication $\xymatrix@1{\leq_{\deg}\ar@{=>}[r]&\leq_{\Ext}}$ for Dynkin quivers that we recall in Corollary~\ref{Cor:EquivalenceOrdersDynkin}). Thus, it is heavily based on the celebrated cancellation theorem of Bongartz \cite[Theorem~2.4]{Bongartz} that we recall in Subsection~\ref{Sec:TheoremBongartz}. The main new ingredient is what we call \emph{generic $\ee$-subquotients} of an $\ee$-representation by an indecomposable \emph{isotropic} subrepresentation. Proposition~\ref{Prop:KeyLemma}  states that (under some mild hypotheses) it is possible to embed \emph{isotropically} an indecomposable representation into an $\ee$-representation and this embedding is generic  (see Subsection~\ref{Sec:GenericQuotKer} for the definition of generic quotients, generic embeddings and their dual version). 

Our goal is to make the paper self-contained and spare the reader a long search through the literature. In Section~\ref{Sec:RepTheoryTypeA} we recall with proofs many special features of the representation theory and Auslander-Reiten theory of quivers of type $A$, used in the proof of Theorem~\ref{Thm:MainThm}. We address the reader to the references only for those aspects which hold in greater generality (e.g. for Dynkin quivers, or for gentle algebras) and whose proofs are not simpler when restricted to our context. Subsection~\ref{Sec:GenericQuot} contains an explicit description of generic quotients by  indecomposables. Section~\ref{Sec:SymmTypeA} is dedicated to highlight essential facts about $\ee$-representations of quivers of type $A$, including the key concepts of generic isotropic subrepresentations and generic $\ee$-subquotients.

We conjecture that  Theorem~\ref{Thm:MainThm} holds for every symmetric algebra which is representation-directed. 
It is an open problem to investigate if Theorem~\ref{Thm:MainThm} holds true for every symmetric quiver algebra of finite representation type whose  indecomposable modules are rigid (compare with \cite[Theorem~2]{Zwara}). 
\begin{example}
In the context of Example~\ref{Ex:A2Quiver}, Theorem~\ref{Thm:MainThm} states that given two symmetric (or anti-symmetric) matrices $A$ and $B$, then $B\in \overline{\GL_n A}$ if and only if $rk(B)\leq rk(A)$.
\end{example}
\begin{example}
In the context of Example~\ref{Ex:A3Quiver}, Theorem~\ref{Thm:MainThm} states that given two adjoint pairs $f=(f_\alpha,f_\alpha^\star)$ and $f'=((f')_\alpha,(f')_\alpha^\star)$ in $R^\ee$, then $f'\in \overline{\G^\ee f}$ if and only if $\underline{rk}(f')\leq \underline{rk}(f)$.
\end{example}
\subsubsection*{Motivations}
One of our motivations to seek Theorem~\ref{Thm:MainThm} comes from the study of linear degenerations of the symplectic (complete) flag variety, in analogy with \cite{CFFFR, CFFFR2}. Indeed, let $\A=\mathrm{k}\QQ$ be the path algebra of  the equioriented quiver of type $A_{2n-1}$ and let $(V,\form)$ be a $(-1)$-quadratic space for $(\QQ,\sigma)$ of dimension vector $\mathbf{d}=(2n,2n,\cdots, 2n)$. Then one can construct a $\GL^\bullet(V)^{\form, (-1)}$-equivariant family $\mathcal{X}\rightarrow R(\A,V)^{\form,(-1)}$ whose fiber over a symplectic representation consists of its \emph{isotropic} (actually Lagrangian) subrepresentations of dimension vector $\mathbf{e}=(1,2,\cdots, 2n-1)$. One sees immediately that the fiber over  the generic point is the complete symplectic flag variety. It is then an interesting problem to understand the geometric properties of the special fibers, which can be considered as ``linear'' degenerations of the symplectic flag variety. The first step to study this family, is to understand the orbit closures in the base. This is now clear thanks to Theorem~\ref{Thm:MainThm}. 

The general answer of Question \ref{General Question} is furthermore interesting from an algebraic Lie-theoretic point of view: It helps to answer questions about orbit closures in the nilpotent cone. In more detail, the above described classical example of reductive actions on the nilpotent cone can be generalized in many ways (e.g. restrict to Borel-actions on certain subvarieties of nilpotent matrices as in \cite{BCIE}). This setup can be translated via an associated fibre bundle to the (symmetric) representation theory of a particular quiver with relations.

\textbf{Acknowledgements.} We thank Andrea Maffei, Corrado De Concini, Hans Franzen, Francesco Esposito and Alessandro D'Andrea for helpful conversations. We thank Raquel Coelho, Pierre-Guy Plamondon, Daniel Labardini-Fragoso, Jan Schr\"oer,  and Gregorz Zwara for discussions about a possible generalization of Theorem~\ref{Thm:MainThm} to algebras associated with partial triangulations of polygons. 

This work was sponsored by DFG Forschungsstipendium BO 5359/1-1, DFG Rückkehrstipendium BO 5359/3-1  and DFG Sachbeihilfe BO 5359/2-1.

\section{Algebras with self-dualities}\label{Sec:SymQuiver}
\noindent
In this section we give the proof of the result of Magyar, Weyman and Zelevinsky mentioned in the introduction, applied to the symmetric representation varieties. Along the way we introduce notation that will be used throughout the paper.
\subsection{The self-duality}
In this section, we introduce the self-duality which comes along with a symmetric quiver algebra. Let $\A:=\mathrm{k}\QQ/I$ be a symmetric quiver algebra associated with a symmetric quiver $(\QQ,\sigma)$. The anti-involution $\sigma$ induces an isomorphism $\sigma:\A\rightarrow \A^{op}$ which induces an equivalence $\sigma:\textrm{Rep}(\A)\rightarrow \textrm{Rep}(\A^{op})$ of the representation categories. By composing with the standard $\mathrm{k}$-duality $D=\Hom(-,\mathrm{k})$ we get a self-duality on $\textrm{Rep}(\A)$ that we denote by ${}^\ast:\textrm{Rep}(\A)\rightarrow \Rep(\A)$. 

\begin{example}\label{Ex:TypeA3SelfDuality} 
Let $\xymatrix@1@C=20pt{\QQ=1\ar|(.6)\alpha[r]& 2\ar|\beta[r]& 3}$. The functor ${}^\ast$ is defined on objects by

$$
\def\g#1{\save [].[rr]!C="g#1"*+<10pt>[F-:<2pt>]\frm{}\restore}%
\xymatrix@1@C=30pt{\g1 M=(V_1\ar|(.6)f[r]& V_2\ar|g[r]& V_3)&\g2  M^\ast=(V_3^\ast\ar|(.6){(g^\ast)}[r]&V_2^\ast\ar|{(f^\ast)}[r]& V_1^\ast).
\ar @{|->} "g1";"g2"
}
$$
and on morphisms 
$$
\def\g#1{\save [].[drrr]!C="g#1"*+<10pt>[F-:<2pt>]\frm{}\restore}%
\xymatrix@1{
\g1 M_1:\ar^h[d]&V_1\ar^{f_1}[r]\ar^{h_1}[d]& V_2\ar^{g_1}[r]\ar^{h_2}[d]& V_3\ar_{h_3}[d]&\g2 M_2^\ast:\ar^{h^\ast}[d]&W_3^\ast\ar^{g_2^\ast}[r]\ar^{h_3^\ast}[d]&W_2^\ast\ar^{f_2^\ast}[r]\ar^{h_2^\ast}[d]& W_1^\ast\ar_{h_1^\ast}[d]\\
*{M_2}:&W_1\ar^{f_2}[r]& W_2\ar^{g_2}[r]& W_3&  M_1^\ast:&V_3^\ast\ar^{g_1^\ast}[r]&V_2^\ast\ar^{f_1^\ast}[r]& V_1^\ast
\ar @{|->} "g1";"g2"
}
$$
\end{example}
\noindent
Here and always throughout the paper, given a vector space $V$, we denote by $V^\ast=\Hom(V,\mathrm{k})$ its linear dual  and given a linear map $f:U\rightarrow V$ between two vector spaces, its dual is the linear map $f^\ast:V^\ast \rightarrow U^\ast$ defined by $f^\ast(h)(u)=h(f(u))$ for every $h\in V^\ast$ and $u\in U$.

\subsection{The functor $\nabla$}\label{Sec:Nabla}
It is convenient to introduce the following twist of the self-duality ${}^\ast$. For a $\QQ_0$-graded vector space $V=\oplus_{i\in \QQ_0} V_i$, we define its ``twisted dual'' $\nabla V$ as the $\QQ_0$-graded vector space whose $i$-th component is $(\nabla V)_i=V^\ast_{\sigma(i)}$. We define a functor $\nabla:\Rep(\A)\rightarrow \Rep(\A)$ as follows: given a representation $M\in R(\A,V)$ its twisted dual is a representation $\nabla M\in R(\A,\nabla V)$ defined by $\nabla(M)_\alpha=-M_{\sigma(\alpha)}^\ast$ for every arrow $\alpha$ (notice the minus sign); given a morphism $h:M\rightarrow N$ its twisted dual is defined by $(\nabla h)_i=h_{\sigma(i)}^\ast$, for every vertex $i\in\QQ_0$. 
\begin{example}\label{Ex:TypeA3Nabla}
Let $\xymatrix@1@C=20pt{\QQ=1\ar|(.6)\alpha[r]& 2\ar|\beta[r]& 3}$. The functor $\nabla$ is defined  on objects by
$$
\def\g#1{\save [].[rr]!C="g#1"*+<10pt>[F-:<2pt>]\frm{}\restore}%
\xymatrix@1@C=35pt{\g1 M=(V_1\ar|(.6)f[r]& V_2\ar|g[r]& V_3)&\g2\nabla M=(V_3^\ast\ar|(.6){(-g^\ast)}[r]&V_2^\ast\ar|{(-f^\ast)}[r]& V_1^\ast).
\ar @{|->} "g1";"g2"
}
$$ and on morphisms as the self-duality ${}^\ast$ (see Example~\ref{Ex:TypeA3SelfDuality}).
\end{example}
\subsection{The isomorphism $\Psi$}\label{Subsec:Psi}
Let $(V,\form)$ be an $\ee$-quadratic space for the symmetric quiver $(\QQ,\sigma)$.   The form $\form$ induces the isomorphism $\Psi: V\rightarrow \nabla V$ given by $\Psi(v)=\langle v,-\rangle$ for every $v\in V$. Moreover, this isomorphim satisfies the relation: 
\begin{equation}\label{Eq:Psi}
\nabla \Psi=\ee\Psi.
\end{equation}
Indeed, by canonically identifying $\nabla\nabla V\simeq V$ via the evaluation map, we get $\nabla \Psi (v_1)(v_2)=\Psi(v_2)(v_1)=\langle v_2, v_1\rangle=\ee\langle v_1,v_2\rangle=\ee\Psi(v_1)(v_2)$.
Viceversa, if $\Psi:V\rightarrow \nabla V$ is an isomorphism of $\QQ_0$-graded vector spaces satisfying \eqref{Eq:Psi}, then the bilinear form on $V$ given by $\langle v_1,v_2\rangle:=\Psi(v_1)(v_2)$ is a $\sigma$-compatible $\ee$-form on $V$. Thus the datum of a $\sigma$-compatible $\ee$-form on $V$ is equivalent to the datum of an isomorphism $\Psi:V\rightarrow \nabla V$ satisfying \eqref{Eq:Psi}. With abuse of terminology we sometimes say that $\Psi$ is an $\ee$-form on $V$.

With this ingredient, the definition of an $\ee$-representation  is reformulated as follows: a representation $M$ with underlying vector space $V$ is an $\ee$-representation with respect to $\form$ if and only if  $\Psi$ is an $\A$-isomorphism from $M$ to $\nabla M$. Thus we sometimes use the following notation for the variety $R(\A,V)^{\form,\ee}$: 
\begin{equation}\label{Eq:RepVarPsi}
R(\A,V)^{\Psi,\ee}=\{M\in R(\A,V)|\, \Psi\in \Hom_\A(M,\nabla M)\}.
\end{equation}
In terms of $\Psi$, an element $g\in\GL^\bullet(V)$ belongs to the group of graded isometries $\G^\bullet(V,\form)$ if and only if $\nabla g\circ\Psi\circ g=\Psi$ and we use the following notation:
$$
\G^\bullet(V,\Psi)=\{g\in\GL^\bullet(V)|\, \nabla g\circ\Psi\circ g=\Psi\}.
$$
Two $\ee$-representations $M,N\in R(\A,V)^{\Psi,\ee}$ are isomorphic as $\ee$-representations if there exists an isomorphism $\theta:M\rightarrow N$ of $\A$-representations such that $\nabla\theta\circ\Psi\circ\theta=\Psi$. 
\noindent
We sometimes write a point of $R(\A,V)^{\Psi,\ee}$ as $(M,\Psi)$ to highlight the dependency on $\Psi$ and we say that $M$ is an $\ee$-representation with respect to the $\ee$-form $\Psi$.
\begin{rem}\label{Rem:Psi}
An $\ee$-representation $M$ is isomorphic to its `` twisted dual'' $\nabla M$ via the isomorphism $\Psi$. We sometimes omit to mention this isomorphism and freely identify $M$ and $\nabla M$. Thus, for example, if $f:L\rightarrow M$ is an homomorphism then $\nabla f: \nabla M\rightarrow \nabla L$ and we sometimes write $\nabla f\circ f:L\rightarrow \nabla L$ instead of $\nabla f\circ\Psi\circ f$.
\end{rem}

\subsection{Embedding into the general context}\label{Sec:RelationWithMWZ}
In this section we show that the theorem of Magyar, Weyman and Zelevinsky mentioned in the introduction applies to the symmetric representation varieties. 
Let $(\A,\sigma)$ be a symmetric quiver algebra, let $(V,\form)$ be an $\ee$-quadratic space for $(\A,\sigma)$, let $X=R(\A,V)$ and $\G=\GL^\bullet(V)$. Let $E=\End^\bullet(V)=\prod_{i\in\QQ_0}\End(V_i)$ be the finite dimensional algebra of graded endomorphisms of $V$. Thus $\G\subset E$ is the group of invertible elements of $E$. For $M\in R(\A,V)$ we denote by $\Delta M\in R(\A, V)$ the representation $(\Delta M)_\alpha:=-M_{\sigma(\alpha)}^\star: V_i\rightarrow V_j$ for every arrow $\alpha:i\rightarrow j$. Similarly, for $f=(f_i)_{i\in\QQ_0}\in E$ we denote by $\Delta f\in E$ the graded endomorphism of $V$ given by $(\Delta f)_i=f_{\sigma(i)}^\star:V_i\rightarrow V_i$. Notice that ${}^\star$ denotes the adjoint with respect to the bilinear form $\form$ and thus the functor $\Delta$ is slightly different from the functor $\nabla$ defined in Section~\ref{Sec:Nabla}. On the pair $(X,G)$ we consider the two involutions $\Delta:X\rightarrow X: M\mapsto \Delta M$ and $\rho:G\rightarrow G: g\mapsto g^\rho=\Delta(g)^{-1}$.
The variety $R(\A,V)^{\form,\ee}$ of $\ee$-representations of $(\A,\sigma)$ with respect to $(V,\form)$ is hence the variety $X^\Delta$ of $\Delta$-fixed points and the group $\G^\bullet(V,\form)$ of graded isometries of $(V,\form)$ is the group $G^\rho$ of $\rho$-fixed points.  We now have to check that the four hypotheses mentioned in the introduction are satisfied by the pair $(X,G)$ endowed with the two involutions $\Delta$ and $\rho$. 
\begin{enumerate}[label=(\arabic*)]
\item Let $\alpha:i\rightarrow j$ be an arrow of $\QQ$ and thus $\sigma(\alpha):\sigma(j)\rightarrow \sigma(i)$ is the symmetric arrow. Let $g\in G$ and $M\in X$. Then $(g\cdot M)_\alpha=g_j\circ M_\alpha\circ g_i^{-1}$ and $(g\cdot \Delta M)_{\sigma(\alpha)}=-g_{\sigma(i)}\circ M_{\alpha}^\star\circ g_{\sigma(j)}^{-1}$. Thus, 
\begin{eqnarray*}
\Delta(g\cdot \Delta M)_\alpha&=&-(g\cdot \Delta M)_{\sigma(\alpha)}^\star =(g_{\sigma(i)}\circ M_{\alpha}^\star \circ g_{\sigma(j)}^{-1})^\star\\
&=&(g_{\sigma(j)}^{-1})^\star\circ M_{\alpha}\circ g_{\sigma(i)}^\star=(g^\rho\cdot  M)_\alpha.
\end{eqnarray*}
\item The group $G$ is the group of invertible elements of the algebra $E$.
\item The anti-involution of $G$ given by $g\mapsto (g^\rho)^{-1}=\Delta g$ is the restriction to $G$ of the linear anti-involution of $E$ given by $f\mapsto \Delta f$.
\item For every $\ee$-representation $M$, let $H=\Aut(M)$ denote its $G$-stabilizer. Then an element $h\in H$ is a collection $h=(h_i: V_i\rightarrow V_i)_{i\in \QQ_0}$ of linear isomorphisms such that $h_jM_\alpha h_i^{-1}=M_\alpha$ for every arrow $\alpha:i\rightarrow j$ of $\QQ$. This means that $h_jM_\alpha=M_\alpha h_i$. The linear span of $H$ hence consists of those $f=(f_i)\in E$ such that $f_jM_\alpha=M_\alpha f_i$ and $H$ is the group of invertible elements of its linear span.
\end{enumerate}
Thus  \cite[Prop.~2.1]{MWZ} applies to this situation and we have $G\cdot M\cap X^\Delta=G^\rho M$ for every $M\in X^\Delta$. We give another prove of this result in the subsequent section~\ref{Sec:IsoClasses}.

\subsection{Automorphism groups of $\ee$-representations} 
Let $M\in R(\A,V)^{\Psi,\ee}$ be an $\ee$-representation with respect to the $\ee$-form $\Psi:V\rightarrow\nabla V$. The $\A$-homomorphism space $\Hom(M,\nabla M)$ admits a decomposition $\Hom(M,\nabla M)=\Hom(M,\nabla M)^{\nabla}\oplus\Hom(M,\nabla M)^{-\nabla}$ into eigenspaces for the linear involution $f\mapsto\nabla f$, where 
\begin{equation}\label{Eq:HomPMNabla}
\Hom(M,\nabla M)^{\pm\nabla}:=\{\theta\in\Hom(M,\nabla M)|\,\nabla\theta=\pm\theta\}.
\end{equation} 
For instance, $\Psi\in\Hom(M,\nabla M)^{\ee\nabla}$.
An \emph{automorphism of the $\ee$-representation} $(M,\Psi)$ is an automorphism $\theta:M\rightarrow M$ of the $\A$-representation $M$ such that $\nabla\theta\circ\Psi\circ\theta=\Psi$; in terms of the bilinear form this means that $\langle\theta(v_1),\theta(v_2)\rangle=\langle v_1,v_2\rangle$ for every $v_1,v_2\in V$. We denote by $\Aut(M,\Psi)$ the group of automorphisms of $(M,\Psi)$. In particular, $\Aut(M,\Psi)$ is a subgroup of $\GL^\bullet(V)=\prod_{i\in Q_0}\GL(V_i)$. The Lie algebra $\Lie\Aut(M,\Psi)$  consists of those $\theta\in\Hom(M,M)$ such that
$$
\Psi\circ\theta+\nabla\theta\circ\Psi=0
$$
i.e. $\langle\theta(v_1),v_2\rangle+\langle v_1,\theta(v_2)\rangle=0$ for every $v_1,v_2\in V$. 
The dimension of $\Aut(M,\Psi)$ can be computed as follows.

\begin{lem}\label{Lem:SymplDimAut}Let $(M,\Psi)$ be an $\ee$-representation. Then
\begin{equation}
\Lie \Aut(M,\Psi)\simeq\Hom(M,\nabla M)^{-\ee\nabla}
\end{equation}
In particular, $\dim\Aut(M,\Psi)=\dim\Hom(M,\nabla M)^{-\ee\nabla}$.
\end{lem}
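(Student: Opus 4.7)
The plan is to transport the description of $\Lie\Aut(M,\Psi)$ given immediately before the statement over to a subspace of $\Hom(M,\nabla M)$ by left-multiplication with the isomorphism $\Psi$. Concretely, I would consider the linear map
\[
\varphi:\End_\A(M)\longrightarrow \Hom_\A(M,\nabla M),\qquad \theta\mapsto \Psi\circ\theta.
\]
Since $\Psi:M\to\nabla M$ is an $\A$-isomorphism (this is exactly the defining property of an $\ee$-representation in the $\Psi$-formulation of Subsection 2.3), $\varphi$ is a linear bijection. The whole content of the lemma is then to check that $\varphi$ carries $\Lie\Aut(M,\Psi)$ isomorphically onto $\Hom(M,\nabla M)^{-\ee\nabla}$.

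For the key step, I would differentiate the defining equation $\nabla g\circ\Psi\circ g=\Psi$ of $\Aut(M,\Psi)\subset \Aut_\A(M)$ at the identity to recover, as in the paragraph preceding the lemma, that $\theta\in \Lie\Aut(M,\Psi)$ iff $\theta\in\End_\A(M)$ and $\Psi\circ\theta+\nabla\theta\circ\Psi=0$. The main computation then uses only the identity $\nabla\Psi=\ee\Psi$ from \eqref{Eq:Psi}: because $\nabla$ is contravariant,
\[
\nabla(\Psi\circ\theta)=\nabla\theta\circ\nabla\Psi=\ee\,\nabla\theta\circ\Psi.
\]
Therefore the Lie-algebra condition $\Psi\theta=-\nabla\theta\circ\Psi$ is equivalent to $\nabla(\Psi\theta)=-\ee\,(\Psi\theta)$, i.e.\ to $\varphi(\theta)\in\Hom(M,\nabla M)^{-\ee\nabla}$. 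Conversely, given $\eta\in\Hom(M,\nabla M)^{-\ee\nabla}$, putting $\theta:=\Psi^{-1}\eta\in\End_\A(M)$ and using that $\nabla(\Psi^{-1})=\ee\,\Psi^{-1}$ (obtained by applying $\nabla$ to $\Psi^{-1}\circ\Psi=\id$) shows $\nabla\theta\circ\Psi=\ee\,\nabla\eta=-\eta=-\Psi\theta$, so $\theta\in\Lie\Aut(M,\Psi)$. Thus $\varphi$ restricts to a linear isomorphism $\Lie\Aut(M,\Psi)\simeq\Hom(M,\nabla M)^{-\ee\nabla}$.

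For the ``In particular'' clause, I would note that $\Aut(M,\Psi)$ is a closed subgroup of the algebraic group $\GL^\bullet(V)$ (the defining equations $\theta\in\End_\A(M)$ and $\nabla\theta\circ\Psi\circ\theta=\Psi$ are polynomial), hence an algebraic group over $\CC$; being smooth, its dimension equals that of its Lie algebra, which by the previous paragraph equals $\dim\Hom(M,\nabla M)^{-\ee\nabla}$. The only real ``obstacle'' is purely bookkeeping — keeping straight the contravariance of $\nabla$, the natural identification $\nabla\nabla\simeq\id$, and the sign $\ee$ — but once \eqref{Eq:Psi} is in hand the verification is immediate.
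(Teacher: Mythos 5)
Your proof is correct and uses essentially the same argument as the paper: the linear isomorphism $\theta\mapsto\Psi\circ\theta$ between $\Lie\Aut(M,\Psi)$ and $\Hom(M,\nabla M)^{-\ee\nabla}$, verified via the identity $\nabla(\Psi\circ\theta)=\ee\,\nabla\theta\circ\Psi$ coming from \eqref{Eq:Psi}. The only (welcome) additions are your explicit derivation of the Lie-algebra condition by differentiating the defining equation of $\Aut(M,\Psi)$, and your remark on smoothness for the dimension count, both of which the paper takes for granted.
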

\begin{proof}
Consider the map $\zeta:\Lie \Aut(M,\Psi)\rightarrow \Hom(M,\nabla M)^{-\ee\nabla}:\theta\mapsto\Psi\circ\theta$.
This map is well-defined: $\Psi\circ\theta$ is indeed a homomorphism $M\rightarrow\nabla M$ and moreover
$$
\nabla (\Psi\circ\theta)=\nabla\theta\circ\nabla\Psi=\ee\nabla\theta\circ\Psi=-\ee \Psi\circ\theta.
$$
The map $\zeta$ is an isomorphism of vector spaces: indeed it is clearly linear and the inverse is given by 
$\zeta':\Hom(M,\nabla M)^{-\ee\nabla}\rightarrow \Lie \Aut(M,\Psi):f\mapsto\Psi^{-1}\circ f$. \qedhere
\end{proof}

\subsection{Isomorphism classes of symmetric representations}\label{Sec:IsoClasses}
The following result was proved by Derksen and Weyman in \cite[Theorem~2.6]{DW} for symmetric quivers without relations and it is a special case of the result proved by Magyar, Weyman and Zelevinsky mentioned in the introduction (see Section~\ref{Sec:RelationWithMWZ}). We provide a different proof for symmetric quiver algebras.

\begin{thm}\label{thm:isoclasses}
Let $M, N\in R(\A,V)^{\Psi,\ee}$  be two $\ee$-representations of $\A$. Then they are isomorphic as $\ee$-representations if and only if they are isomorphic as $\A$-representations.
\end{thm}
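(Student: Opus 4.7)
The direction $\Rightarrow$ is tautological. For $\Leftarrow$, let $\phi : M \to N$ be an $\A$-isomorphism; the plan is to modify $\phi$ on the right by an $\A$-automorphism of $M$ to make it an isometry. Set $\Psi' := \nabla\phi \circ \Psi \circ \phi : M \to \nabla M$. Using $\nabla \Psi = \ee \Psi$ and the canonical identification $\nabla\nabla \simeq \mathrm{id}$, one verifies $\nabla \Psi' = \ee \Psi'$, so $\Psi'$ is a second $\ee$-form on $M$ and turns $M$ into an $\ee$-representation. If $g \in \Aut_\A(M)$ satisfies $\nabla g \circ \Psi' \circ g = \Psi$, then $\theta := \phi \circ g$ is the desired isomorphism of $\ee$-representations.

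The problem thus reduces to showing that any two $\ee$-forms on the same $\A$-module $M$ are equivalent under $\Aut_\A(M)$. For this, introduce on the finite-dimensional $\CC$-algebra $E := \End_\A(M)$ the anti-involution $\star$ defined by $f^\star := \Psi^{-1} \circ \nabla f \circ \Psi$, using the identification of $M$ with $\nabla M$ via $\Psi$ as in Remark~\ref{Rem:Psi}. The element $h := \Psi^{-1} \circ \Psi' \in E^\times$ is $\star$-self-adjoint: a short computation using the $\ee$-symmetry of both $\Psi$ and $\Psi'$ cancels the two factors of $\ee$. The equation $\nabla g \circ \Psi' \circ g = \Psi$ then rewrites as $g^\star\, h\, g = 1$ in $E$. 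Hence it suffices to produce a $\star$-fixed invertible $g \in E$ commuting with $h$ and satisfying $g^2 = h^{-1}$, since then $g^\star h g = g h g = h \cdot g^2 = 1$.

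To produce such a $g$ I use the Jordan--Chevalley decomposition in $E$. Write $h^{-1} = s u$ with $s$ semisimple, $u$ unipotent, $s u = u s$, and both $s, u$ polynomials in $h^{-1}$. Since $(h^{-1})^\star = h^{-1}$ and the Jordan--Chevalley decomposition is unique, $s$ and $u$ are themselves $\star$-fixed. The unipotent factor $u = 1 + \eta$ with $\eta$ nilpotent admits a square root $y_u \in \CC[\eta] \subset \CC[h^{-1}]$ via the truncated binomial series. The semisimple element $s$ generates a commutative semisimple subalgebra $\CC[s]$, which is a finite product of copies of $\CC$; in such an algebra $s$ manifestly has a square root $y_s \in \CC[s]$. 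Setting $g := y_u\, y_s$ gives an element of $\CC[h^{-1}]$ that commutes with $h$, squares to $h^{-1}$, and is $\star$-fixed as a polynomial in the $\star$-fixed element $h^{-1}$. Invertibility of $g$ is automatic since $g^2 = h^{-1}$ is invertible.

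The only delicate point is arranging for the square root $g$ to be simultaneously an $\A$-endomorphism and $\star$-fixed; working inside the commutative subalgebra $\CC[h^{-1}] \subset E$ handles both conditions in one stroke, reducing the argument to the standard fact that any invertible element of a finite-dimensional $\CC$-algebra possesses a square root. All other steps are bookkeeping that translates the isometry condition into the algebraic identity $g^\star h g = 1$.
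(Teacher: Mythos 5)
Your proof is correct, and it takes a genuinely different route from the paper. Both arguments share the same reduction: exhibit $g\in\Aut_\A(M)$ carrying one $\ee$-form on $M$ to the other, which amounts to showing that $\Aut_\A(M)$ acts transitively on the set of invertible elements of $\Hom_\A(M,\nabla M)^{\ee\nabla}$. The paper achieves this by an orbit-density argument: it computes (via Lemma~\ref{Lem:SymplDimAut}) that the stabilizer of any nondegenerate $\ee$-form has dimension $\dim\Hom(M,\nabla M)^{-\ee\nabla}$, so every $\Aut_\A(M)$-orbit in the vector space $\Hom(M,\nabla M)^{\ee\nabla}$ is dense, and hence the open dense subset of nondegenerate forms is a single orbit. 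Your argument is constructive and purely algebraic: you transport the problem to $E=\End_\A(M)$ with the adjoint anti-involution $\star$, rewrite the isometry condition as $g^\star h g = 1$ for the $\star$-self-adjoint unit $h=\Psi^{-1}\Psi'$, and then extract a square root of $h^{-1}$ inside the commutative subalgebra $\CC[h^{-1}]$ by combining the Jordan--Chevalley decomposition with the truncated binomial series. Since $g$ is a polynomial in the $\star$-fixed element $h^{-1}$, it is automatically $\star$-fixed and commutes with $h$, which immediately gives $g^\star h g = hg^2 = 1$. (This also makes your appeal to uniqueness of Jordan--Chevalley unnecessary: $s$ and $u$ are $\star$-fixed for the same reason $g$ is.) Your approach sidesteps Lemma~\ref{Lem:SymplDimAut} and the irreducibility machinery entirely, and it produces an explicit isometry rather than a mere existence statement; the paper's version is shorter once Lemma~\ref{Lem:SymplDimAut} is in hand but is nonconstructive. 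Both arguments use only that the ground field is quadratically closed.
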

\begin{proof}
If $M$ and $N$ are isomorphic as $\ee$-representations, then they are isomorphic as $\A$-representations since the group $\G^\bullet(V,\Psi)$ is a subgroup of $\GL^\bullet(V)$. To prove the converse, we choose an isomorphism $\theta:M\rightarrow N$ and we prove that there exists $\rho\in\Aut(M)$, such that $\theta\circ\rho$  fulfills $\nabla (\theta\circ\rho)\circ\Psi\circ(\theta\circ\rho)=\Psi$.
\\[1ex]
The group $\Aut(M)$ acts from the right on $\Hom(M,\nabla M)^{\ee\nabla}$ as follows
$$
\xymatrix@R=5pt{
\Hom(M,\nabla M)^{\ee\nabla}\times \Aut(M)\ar[r]&\Hom(M,\nabla M)^{\ee\nabla},\\
(\xi,\rho)\ar@{|->}[r]&\xi\cdot\rho:=\nabla\rho\circ\xi\circ\rho.
}
$$
\\[1ex]
Let $\Hom^0(M,\nabla M)^{\ee\nabla}\subseteq\Hom(M,\nabla M)^{\ee\nabla}$ denote the open subset consisting of invertible $(\ee\nabla)$-invariant homomorphisms. Since $\Psi\in\Hom(M,\nabla M)^{\ee\nabla}$ is an isomorphism, we see that $\Hom^0(M,\nabla M)^{\ee\nabla}$ is non-empty and hence dense in the vector space $\Hom(M,\nabla M)^{\ee\nabla}$. 
\\[1ex]
The $\Aut(M)$-action above descends to an action on $\Hom^0(M,\nabla M)^{\ee\nabla}$. We denote by $\Stab_{\Aut(M)}(\pi)$ the stabilizer of a point $\pi\in\Hom^0(M,\nabla M)^{\ee\nabla}$ for this action. Then  $\Stab_{\Aut(M)}(\pi)=\Aut(M,\pi)$. By Lemma \ref{Lem:SymplDimAut} we get $\dim\Stab_{\Aut(M)}(\pi)=\dim \Hom(M,\nabla M)^{-\ee\nabla}$. 
It follows that
$$
\dim \Aut(M)=\dim\Hom(M,\nabla M)=\dim \Stab_{\Aut(M)}(\pi)+\dim\Hom(M,\nabla M)^{\ee\nabla}.
$$
We hence see that every point in $\textrm{Hom}^0(M,\nabla M)^{\ee\nabla}$ has a dense $\Aut(M)$-orbit. Since $\Hom(M,\nabla M)^{\ee\nabla}$ is irreducible, being a vector space, two such orbits meet. Both $\Psi$ and $\nabla\theta\circ\Psi\circ\theta$ lie in $\textrm{Hom}^0(M,\nabla M)^{\ee\nabla}$. It follows that there exists $\rho\in\textrm{Aut}(M)$ such that $(\nabla\theta\circ\Psi\circ\theta)\cdot\rho=\Psi$, which is what we wanted to prove.
\end{proof}
\noindent
The following fact is very well-known. It basically says that two complex symmetric or anti-symmetric non-degenerate forms are conjugate under $\GL$. In terms of symmetric representation theory, it says that if $M$ is an $\ee$-representation with respect to some $\ee$-quadratic space $(V,\form)$, then for any other choice $\form'$ of a $\sigma$-compatible $\ee$-form on $V$ there is $M'\simeq M$ such that $M'$ is an $\ee$-representation with respect to  $(V,\form')$. This fact is very useful when one  makes computations with $\ee$-representations, since it allows to change the form at the best convenience.
\begin{cor}\label{cor:NoMatterForm}
Let $\Psi, \Psi'\in \Hom^0(V,\nabla V)^{\ee\nabla}$ be two $\sigma$-compatible $\ee$-forms on a $\QQ_0$-graded vector space $V$. Then there exists $g\in \GL^\bullet(V)$ such that $\nabla g\circ \Psi\circ g=\Psi'$. In particular, if $M$ is an $\ee$-representation with respect to $\Psi$, then $M'=g^{-1}\cdot M$ is an $\ee$-representation with respect to $\Psi'$.
\end{cor}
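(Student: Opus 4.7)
The plan is to apply Theorem~\ref{thm:isoclasses}, or rather the orbit argument in its proof, to the \emph{semisimple} representation $M_{0}\in R(\A,V)$ whose structure maps all vanish. Two observations make $M_{0}$ a clean test case: since $M_{0}^{\star}+M_{0}=0$ trivially, $M_{0}$ is an $\ee$-representation with respect to \emph{every} $\sigma$-compatible $\ee$-form on $V$, in particular both $\Psi$ and $\Psi'$; and since $(M_{0})_{\alpha}=0$ for all $\alpha$, the $\A$-compatibility condition for an automorphism is vacuous, so $\Aut(M_{0})=\GL^{\bullet}(V)$. Similarly, $\Hom(M_{0},\nabla M_{0})$ is the full space of graded linear maps $V\to \nabla V$ and its $(\ee\nabla)$-eigenspace is exactly the space of $\sigma$-compatible $\ee$-forms on $V$, with $\Hom^{0}(M_{0},\nabla M_{0})^{\ee\nabla}$ the open subset of non-degenerate ones.

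Running the orbit argument from the proof of Theorem~\ref{thm:isoclasses} verbatim with this $M_{0}$, Lemma~\ref{Lem:SymplDimAut} combined with the identification $\Hom(M_{0},M_{0})\simeq \Hom(M_{0},\nabla M_{0})$ (via composition with $\Psi$) shows that the stabilizer of any point in $\Hom^{0}(M_{0},\nabla M_{0})^{\ee\nabla}$ under the action $\pi\mapsto\nabla\rho\circ\pi\circ\rho$ has dimension equal to $\dim\Hom(M_{0},\nabla M_{0})^{-\ee\nabla}$, which forces every orbit to be dense in the irreducible vector space $\Hom(M_{0},\nabla M_{0})^{\ee\nabla}$. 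Two dense orbits in an irreducible variety must coincide, so the $\GL^{\bullet}(V)$-action on $\Hom^{0}(M_{0},\nabla M_{0})^{\ee\nabla}$ is transitive. Applying transitivity to $\Psi,\Psi'\in\Hom^{0}(M_{0},\nabla M_{0})^{\ee\nabla}$ produces the desired $g\in \GL^{\bullet}(V)$ with $\nabla g\circ\Psi\circ g=\Psi'$.

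For the second assertion, set $M'=g^{-1}\cdot M$. By the definition of the $\GL^{\bullet}(V)$-action on $R(\A,V)$, the graded map $g:M'\to M$ is an $\A$-isomorphism, and applying the functor $\nabla$ gives an $\A$-isomorphism $\nabla g:\nabla M\to\nabla M'$. Since $M$ is an $\ee$-representation with respect to $\Psi$, the map $\Psi:M\to \nabla M$ is an $\A$-isomorphism. The composition $\Psi'=\nabla g\circ\Psi\circ g:M'\to \nabla M'$ is therefore an $\A$-isomorphism as well, which is precisely the statement that $M'$ is an $\ee$-representation with respect to $\Psi'$. I do not expect any real obstacle here: once $M_{0}$ is chosen as the test representation, the entire corollary reduces to the orbit geometry already established in the proof of Theorem~\ref{thm:isoclasses}, and the second half is a bookkeeping computation using naturality of $\nabla$.
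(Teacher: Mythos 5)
Your proof is correct and takes essentially the same approach as the paper: the paper's own proof likewise specializes the orbit argument from the proof of Theorem~\ref{thm:isoclasses} to the semisimple representation with zero structure maps (whose automorphism group is all of $\GL^\bullet(V)$) to get transitivity on $\Hom^0(V,\nabla V)^{\ee\nabla}$, and then deduces the second assertion by observing that $\nabla g\circ\Psi\circ g$ lies in $\Hom_\A(M',\nabla M')^{\ee\nabla}$. You simply spell out the latter step via functoriality of $\nabla$, which the paper leaves implicit.
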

\begin{proof}
In the proof of Theorem~\ref{thm:isoclasses} replace $M$ with the semisimple representation $V$. Since $\Aut(V)=\GL^\bullet(V)$ we get the first part. To get the second part, we notice that if $\Psi\in \Hom(M,\nabla M)^{\ee\nabla}$ then $\Psi'=\nabla g\circ\Psi\circ g\in \Hom(M',\nabla M')^{\ee\nabla}$ where $M'=g^{-1}\cdot M$. 
\end{proof}
\begin{rem}\label{Rem:NoForm}
In view of Corollary~\ref{cor:NoMatterForm} we can omit to mention the form with respect to which a representation is an $\ee$-representation and we just say that a representation is an $\ee$-representation, without specifying the form. We hence sometimes use the notation $R_\mathbf{d}^\ee$ instead of $R(\A,V)^{\form, \ee}$ or $R(\A,V)^{\Psi, \ee}$ to denote the variety of $\ee$-representations of a fixed dimension vector $\mathbf{d}$. This is sloppy, since its points are defined only up to isomorphisms, but we will be very careful and use it only when there is no possibility of confusion. Similarly, we use the notation $\G_\mathbf{d}^\ee$ instead of $\G^\bullet(V,\form)$ or $\G^\bullet(V,\Psi)$ when the dependency on the form is not so relevant.
\end{rem}
\subsection{Isotropic subrepresentations}
Let $M\in R(\A,V)^{\Psi,\ee}$ be an $\ee$--representation with respect to the isomorphism $\Psi$ induced by the $\ee$-form $\langle-,-\rangle$ on the underlying vector space $V$. Let $\iota: N \hookrightarrow M$ be a subrepresentation. The \emph{orthogonal space} of $\iota(N)$ in $V$ is denoted with $\iota(N)^\perp$ and it is defined by 
\[\iota(N)^\perp=\{v\in V|\, \langle v,\iota(n)\rangle=0,\,\forall n\in N\}.\] 
In terms of the isomorphism $\Psi$, $\iota(N)^\perp$ is described in the following lemma. 
\begin{lem}\label{Lem:NperpNabla}
$\iota(N)^\perp=\Ker(\nabla\iota\circ\Psi)\simeq\nabla(M/\iota(N))$. In particular, it is a subrepresentation of $M$.
\end{lem}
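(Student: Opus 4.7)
The plan is to establish the first equality by unwinding definitions, obtain the second identification from exactness of $\nabla$, and then conclude via the fact that $\Psi$ is an $\A$-isomorphism $M\to\nabla M$.

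\smallskip

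\textbf{Step 1 (first equality).} The claim $\iota(N)^\perp=\Ker(\nabla\iota\circ\Psi)$ is essentially a translation of the definition. Recall that $\Psi_i:V_i\to (\nabla V)_i=V_{\sigma(i)}^{\ast}$ sends $v\mapsto\langle v,-\rangle$, and that $(\nabla\iota)_i=\iota_{\sigma(i)}^{\ast}:V_{\sigma(i)}^{\ast}\to N_{\sigma(i)}^{\ast}$. Hence, for $v\in V_i$ and $n\in N_{\sigma(i)}$,
\[
\bigl((\nabla\iota)_i\circ\Psi_i\bigr)(v)(n)=\Psi_i(v)\bigl(\iota_{\sigma(i)}(n)\bigr)=\langle v,\iota(n)\rangle.
\]
By the $\sigma$-compatibility of $\form$, the only pairings of $v\in V_i$ with elements of $\iota(N)$ that can be nonzero are those with elements of $V_{\sigma(i)}$, so $v$ lies in $\iota(N)^\perp$ precisely when $\langle v,\iota(n)\rangle=0$ for all $n\in N_{\sigma(i)}$. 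This is exactly the vanishing of $(\nabla\iota\circ\Psi)_i(v)$, giving the claimed equality componentwise.

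\smallskip

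\textbf{Step 2 (the isomorphism with $\nabla(M/\iota(N))$).} Apply $\nabla$ to the short exact sequence of $\A$-representations
\[
0\longrightarrow N\xrightarrow{\;\iota\;} M\longrightarrow M/\iota(N)\longrightarrow 0.
\]
Since $\nabla$ is the composition of $\sigma:\Rep(\A)\to\Rep(\A^{op})$ with the $\mathrm{k}$-linear duality $D$, both of which are exact contravariant functors, $\nabla$ is an exact contravariant functor. Hence we get the short exact sequence
\[
0\longrightarrow\nabla(M/\iota(N))\longrightarrow\nabla M\xrightarrow{\;\nabla\iota\;}\nabla N\longrightarrow 0,
\]
so $\Ker(\nabla\iota)=\nabla(M/\iota(N))$ as a subrepresentation of $\nabla M$.

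\smallskip

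\textbf{Step 3 (conclusion).} Because $M$ is an $\ee$-representation with respect to $\Psi$, the map $\Psi:M\to\nabla M$ is an isomorphism of $\A$-representations (Subsection~\ref{Subsec:Psi}). Consequently
\[
\Ker(\nabla\iota\circ\Psi)=\Psi^{-1}\bigl(\Ker(\nabla\iota)\bigr)=\Psi^{-1}\bigl(\nabla(M/\iota(N))\bigr),
\]
and the right-hand side is an $\A$-subrepresentation of $M$ isomorphic to $\nabla(M/\iota(N))$ via $\Psi$. Combining with Step 1 yields $\iota(N)^\perp\simeq\nabla(M/\iota(N))$, and in particular $\iota(N)^\perp$ is a subrepresentation of $M$, as it is the kernel of the $\A$-homomorphism $\nabla\iota\circ\Psi:M\to\nabla N$.

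\smallskip

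There is no real obstacle; the only care needed is in tracking the $\sigma$-twist when identifying the componentwise action of $\nabla\iota\circ\Psi$ with pairings under $\form$, and in noting that $\Psi$ being an $\A$-isomorphism (rather than merely a graded linear isomorphism) is what promotes the set-theoretic kernel to an $\A$-subrepresentation.
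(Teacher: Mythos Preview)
Your proof is correct and follows essentially the same approach as the paper: the first equality is unwound from the definition of $\Psi$ and $\nabla\iota$, and the isomorphism is obtained by applying $\nabla$ to the short exact sequence $0\to N\to M\to M/\iota(N)\to 0$ and then transporting $\Ker(\nabla\iota)\subset\nabla M$ back to $M$ via the $\A$-isomorphism $\Psi$. The paper packages Steps~2 and~3 into a single commutative diagram with exact rows, but the content is identical.
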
 
\begin{proof}
By definition $\langle v,\iota(n)\rangle=\Psi(v)(\iota(n))=\nabla\iota\Psi(v)(n)$.
\\[1ex]
The isomorphism $\Ker(\nabla\iota\circ\Psi)\simeq\nabla(M/\iota(N))$ is induced by the following commutative diagram with exact rows,  where the lower short exact sequence is obtained from $\xymatrix@1@C=10pt{0\ar[r]&N\ar^\iota[r]&M\ar[r]&M/\iota(N)\ar[r]&0}$ by applying $\nabla$:
$$
\xymatrix{
0\ar[r]&\ar[r]\iota(N)^\perp\ar@{..>}[d]&M\ar^{\nabla\iota\Psi}[r]\ar^\Psi[d]&\nabla N\ar[r]\ar@{=}[d]&0\\
0\ar[r]&\ar[r]\nabla(M/\iota(N))&\nabla M\ar^{\nabla\iota}[r]&\nabla N\ar[r]&0.
}
$$
More concretely, the $\A$-representation $\nabla(M/\iota(N))$ is the annihilator of $\iota(N)$ in the dual space $\nabla(M)$ and thus the isomorphism $\iota(N)^\perp\rightarrow\nabla(M/\iota(N))$ is simply given by $m\mapsto \langle m,-\rangle=\Psi(m)$.
\end{proof}
\noindent By definition, $\iota(N)\cap\iota(N)^\perp$ is the kernel of the form restricted to $\iota(N)^\perp$. Thus, $\iota(N)^\perp/\iota(N)\cap\iota(N)^\perp$ is an $\ee$-representation. 
\\[1ex]
A subrepresentation $\iota: N \hookrightarrow M$ is called \emph{isotropic} if $\iota(N)\subseteq \iota(N)^\perp$. 
\begin{cor}\label{Cor:Isotropic}
Let $N$  be an $\A$--representation, and $M$  be an $\ee$--representation. Let $f: N \rightarrow M$ be a non-zero homomorphisms. Then the image of $f$ is isotropic if and only if $\nabla f\circ\Psi\circ f=0$. 
\end{cor}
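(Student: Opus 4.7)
The plan is to reduce the statement to Lemma~\ref{Lem:NperpNabla} by passing through the image of $f$.

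First I would factor the homomorphism canonically as $f=\iota\circ\pi$, where $\pi:N\twoheadrightarrow f(N)$ is the surjection onto the image and $\iota:f(N)\hookrightarrow M$ is the inclusion. By Lemma~\ref{Lem:NperpNabla} applied to the subrepresentation $\iota:f(N)\hookrightarrow M$, the orthogonal $\iota(f(N))^\perp$ equals $\Ker(\nabla\iota\circ\Psi)$. Therefore the image $\iota(f(N))$ is isotropic, i.e.\ contained in its own orthogonal, if and only if $\nabla\iota\circ\Psi\circ\iota=0$.

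Next I would show the equivalence of this condition with $\nabla f\circ\Psi\circ f=0$. Using functoriality of $\nabla$ and the factorization $f=\iota\circ\pi$, one has
\[
\nabla f\circ\Psi\circ f \;=\; \nabla\pi\circ\bigl(\nabla\iota\circ\Psi\circ\iota\bigr)\circ\pi.
\]
The forward implication (isotropic $\Rightarrow$ $\nabla f\circ\Psi\circ f=0$) is then immediate. For the converse, I would use that $\pi$ is surjective, so $\nabla\pi$ is injective (the functor $\nabla$ sends epimorphisms to monomorphisms, being essentially the $\mathrm{k}$-dual composed with $\sigma$). Hence vanishing of $\nabla\pi\circ(\nabla\iota\circ\Psi\circ\iota)\circ\pi$ forces $\nabla\iota\circ\Psi\circ\iota=0$, which by the previous paragraph is exactly the isotropy of the image.

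There is no real obstacle here: the proof is essentially a one-line manipulation once the image/coimage factorization is introduced and Lemma~\ref{Lem:NperpNabla} is invoked. The only point that requires a word of justification is the injectivity of $\nabla\pi$, which follows from the exactness properties of the $\mathrm{k}$-dual. The hypothesis that $f$ is nonzero is not needed for the equivalence itself and only serves to rule out the trivial case.
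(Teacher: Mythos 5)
Your proof is correct and matches the paper's intent: the paper states this as an immediate corollary of Lemma~\ref{Lem:NperpNabla} without writing out the proof, and your reduction via the image/coimage factorization $f=\iota\circ\pi$, using injectivity of $\nabla\pi$ and surjectivity of $\pi$, is exactly the intended argument. The side remark that the non-vanishing hypothesis on $f$ is not actually needed is also accurate.
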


\subsection{Indecomposable $\ee$-representations}\label{Sec:IndecEpsilon}
Given two $\ee$-representations $(M_1,\Psi_1)$ and $(M_2,\Psi_2)$ their \emph{direct sum} is the $\ee$-representation $(M_1\oplus M_2,\Psi_1\oplus\Psi_2)$. An $\ee$-representation is called \emph{indecomposable} or \emph{$\ee$-indecomposable} if it is not the direct sum of two non-trivial $\ee$-representations. 
By the Theorem of Krull, Remak and Schmidt and by Theorem~\ref{thm:isoclasses}, every $\ee$--representation  can be written in an essentially unique way as a direct sum of indecomposable $\ee$--representations.  
The following description of the indecomposable $\ee$-representations  is proved in [Proposition~2.7]\cite{DW} for symmetric quivers without relations (see also \cite[Lemma~4.5]{Shm}); 
\begin{lem} Let $M$ be an indecomposable $\ee$-representation. Then one and only one of the following three cases can occur:
\begin{enumerate}
\item[(I)] $M$ is indecomposable as an $\A$-representation. In this case $M$ is called of type $(I)$, for ``indecomposable''.
\item[(S)] There exists an indecomposable $\A$-representation $T$ such that $M=T\oplus\nabla T$  and $T\not\simeq \nabla T$. In this case $M$ is called of type $(S)$, for ``split''.
\item[(R)] There exists an indecomposable $\A$-representation $T$ such that $M=T\oplus\nabla T$  $T\simeq \nabla T$. In this case $M$ is called of type $(R)$ for ``ramified''.
\end{enumerate}
\end{lem}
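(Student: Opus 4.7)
The plan is to decompose $M$ as an $\A$-module and then recast $\ee$-indecomposability as a question about idempotents in $\End_\A(M)$ equipped with the anti-involution induced by $\Psi$. First, I would fix a Krull-Schmidt decomposition $M\simeq\bigoplus_{i=1}^{k}T_i^{m_i}$ with the $T_i$ pairwise non-isomorphic indecomposable $\A$-modules. Since $\Psi:M\to\nabla M$ is an $\A$-isomorphism and $\nabla M\simeq\bigoplus_i(\nabla T_i)^{m_i}$, the uniqueness of Krull-Schmidt implies that the involution $[T]\mapsto[\nabla T]$ permutes $\{[T_1],\ldots,[T_k]\}$ with $m_i=m_j$ whenever $[\nabla T_i]=[T_j]$; each $\nabla$-orbit on $\{[T_i]\}$ thus has size one (when $\nabla T_i\simeq T_i$) or two (when $\nabla T_i\simeq T_j$, $j\neq i$).

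Next, I would set $E:=\End_\A(M)$ and define $\tau:E\to E$ by $\tau(f)=\Psi^{-1}\circ\nabla f\circ\Psi$. A direct computation using $\nabla\Psi=\ee\Psi$ shows that $\tau^2=\mathrm{id}$ and $\tau(fg)=\tau(g)\tau(f)$, so $\tau$ is an anti-involution. Moreover, orthogonal decompositions of $(M,\Psi)$ as $\ee$-representations correspond bijectively to decompositions $1=e+(1-e)$ of the identity into $\tau$-invariant orthogonal idempotents of $E$; hence $(M,\Psi)$ is $\ee$-indecomposable if and only if $0$ and $1$ are the only $\tau$-invariant idempotents of $E$. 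Passing to the semisimple quotient $\bar E:=E/\rad(E)\simeq\prod_{i=1}^{k}\mathrm{Mat}_{m_i}(\CC)$, the anti-involution descends to $\bar\tau$ permuting the matrix blocks in the same way $\nabla$ permutes the iso classes $[T_i]$. A standard symmetrization-and-iterative-lifting argument (valid in characteristic zero) shows that $\bar\tau$-invariant idempotents lift to $\tau$-invariant idempotents of $E$, so the criterion can be checked on $\bar E$.

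A case analysis on the block structure of $\bar E$ then recovers the three types. If two distinct $\nabla$-orbits on $\{[T_i]\}$ appear, the projector onto a single orbit is a nontrivial $\bar\tau$-invariant idempotent; so only one orbit appears. If that orbit has size two, then $\bar E\simeq\mathrm{Mat}_m(\CC)\times\mathrm{Mat}_m(\CC)$ with $\bar\tau$ swapping the factors, and avoiding nontrivial invariant idempotents forces $m=1$, giving type (S). If the orbit is a singleton $[T]$ with $\nabla T\simeq T$, then $\bar E\simeq\mathrm{Mat}_m(\CC)$ carries an anti-involution conjugate to either the transpose or (for $m$ even) the symplectic adjoint; the only configurations without nontrivial invariant idempotents are $m=1$ (type (I)) and $m=2$ with the symplectic anti-involution (type (R)).

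The main obstacle is the interplay between the parity $\ee$ and the self-duality of $T$ in case (R): this case arises exactly when $T$ carries a $(-\ee)$-self-duality, which forces the $2\times 2$ anti-involution to be symplectic rather than orthogonal, and verifying this requires a concrete translation of $\nabla\Psi=\ee\Psi$ into the sign of $\bar\tau$ on the endomorphism block. The lifting of $\tau$-invariant idempotents is a secondary technicality handled by symmetrizing an arbitrary lift via $\tfrac{1}{2}(e_0+\tau(e_0))$ and iterating the standard polynomial correction $x\mapsto 3x^2-2x^3$ inside $\rad(E)$, whose $\tau$-equivariance follows from $\tau(x^n)=\tau(x)^n$.
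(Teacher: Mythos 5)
Your proof is correct and genuinely self-contained, whereas the paper's ``proof'' is a single sentence deferring to Derksen--Weyman \cite[Prop.~2.7]{DW} with the observation that the only ingredient used there---locality of the endomorphism ring of an indecomposable, i.e.\ Fitting's lemma---holds for arbitrary finite-dimensional algebras. Your route is the idempotent-theoretic one (in the spirit of Quebbemann--Scharlau--Schulte): recast $\ee$-indecomposability as the absence of non-trivial $\tau$-invariant idempotents in $E=\End_\A(M)$, where $\tau(f)=\Psi^{-1}\circ\nabla f\circ\Psi$; pass to $\bar E=E/\rad E\simeq\prod_i\mathrm{Mat}_{m_i}(\CC)$; lift invariant idempotents by symmetrizing a standard lift to $\tfrac12(e_0+\tau(e_0))$ and iterating $x\mapsto 3x^2-2x^3$ (whose iterates improve the defect $x^2-x$ from $\rad(E)^n$ to $\rad(E)^{2n}$, and which commutes with $\tau$ since $\tau(x^k)=\tau(x)^k$); then classify anti-involutions of a full matrix algebra over $\CC$. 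This buys you an argument that works verbatim for any symmetric quiver algebra and makes visible exactly where each type arises, at the modest cost of carrying the idempotent-lifting machinery. Two points you gesture at and should spell out to make the write-up airtight: (a) that a $\tau$-invariant idempotent $e$ yields a decomposition $M=eM\perp(1-e)M$ with the restricted forms non-degenerate (orthogonality follows from $\nabla e\circ\Psi=\Psi\circ e$ and then non-degeneracy on each factor is automatic from non-degeneracy on $M$); and (b) the sign computation in the self-dual block: if $T\simeq\nabla T$ and $\psi$ is a basis of the one-dimensional space $\Hom(T,\nabla T)$, then $\nabla\psi=c\psi$ with $c\in\{\pm1\}$, and writing $\Psi$ in this basis as a Gram matrix $B\in\mathrm{Mat}_m(\CC)$, the relation $\nabla\Psi=\ee\Psi$ becomes $cB^t=\ee B$; so $c=\ee$ forces $B^t=B$ (orthogonal block, $m=1$, type (I)) and $c=-\ee$ forces $B^t=-B$ (symplectic block, $m=2$, type (R)). With those two details filled in, the argument is complete.
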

\begin{proof}
The proof of Derksen and Weyman [Proposition~2.7]\cite{DW} uses only the fact that the endomorphism ring of an indecomposable representation is local; since this fact is true for every algebra by Fitting's lemma \cite[Corollary~4.8]{ASS}, the same proof applies to the case of symmetric quiver algebras.
\end{proof}
\section{Isotropic degenerations}\label{Sec:IsoDegs}
\noindent
In order to approach the degeneration order for (non-symmetric) $\A$-representations there is a famous second ordering, namely the Ext-order. It is defined on the isoclasses of $\A$--representations of dimension vector $\mathbf{d}$ as follows (\cite{AbeasisDelFraEquioriented,Bongartz,Riedtmann,R}): 
Given $M,N\in R(\A,V)$,
\begin{equation}\label{Eq:ExtOrder}
\xymatrix@R=0pt{
M\leq_{\Ext} N \ar@{<=>}^(.22){\textrm{def}}[r]& \textrm{There exist representations }M_1,\cdots, M_k \textrm{ such that for }\\& \textrm{ every }i\textrm{ there exists a short exact sequences }\\&0\rightarrow U_i\rightarrow M_{i-1}\rightarrow V_i\rightarrow 0\\& \textrm{ such that }M_1=M, M_k=N, \, M_i\simeq U_i\oplus V_i.}
\end{equation}
Indeed, by \cite[Lemma~2.1]{Bongartz}: $\xymatrix{\leq_{\Ext}\ar@{=>}[r]&\degg}$.
In this section we provide an analogue of this result for $\ee$-representations. 

\begin{thm}\label{Thm:IsoDeg}
Let $M\in R(\A,V)^{\Psi,\ee}$ be an $\ee$--representation and let $\iota:L\hookrightarrow M$ be an isotropic subrepresentation. Then 
\begin{equation}\label{Eq:IsoDeg}
M\degg^ \ee L\oplus\nabla L\oplus (\iota(L)^\perp/\iota(L)).
\end{equation}
More precisely, there is a one-parameter subgroup $\lambda(t)\in\G^\bullet(V,\Psi)$ and a point $m\in \G^\bullet(V,\Psi) M$ such that $\lim_{t\rightarrow0}\lambda(t).m\in \G^\bullet(V,\Psi) (L\oplus\nabla L\oplus (\iota(L)^\perp/\iota(L)))$. 
\end{thm}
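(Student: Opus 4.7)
The strategy is a symmetric analogue of the classical Bongartz one-parameter degeneration: decompose $V$ along the filtration $0\subset\iota(L)\subset\iota(L)^\perp\subset V$ and contract the ``off-diagonal'' parts of $M$ by a cocharacter that is an isometry for $\Psi$.

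First I would choose a $\QQ_0$-graded decomposition $V=U\oplus W\oplus Z$ with $U=\iota(L)$ and $U\oplus W=\iota(L)^\perp$, such that $\Psi$ is in hyperbolic normal form:
\begin{itemize}
\item both $U$ and $Z$ are totally isotropic;
\item $W$ is $\Psi$-orthogonal to both $U$ and $Z$;
\item the pairing of $U_i$ with $Z_{\sigma(i)}$ is non-degenerate (hence $Z\simeq \nabla L$ as graded vector spaces) and the restriction of $\Psi$ to $W$ is a non-degenerate $\sigma$-compatible $\ee$-form, so that $W$ is identified with the $\ee$-quadratic space $\iota(L)^\perp/\iota(L)$.
\end{itemize}
Any graded linear complement $W$ of $\iota(L)$ inside $\iota(L)^\perp$ works for the ``middle'' piece, and an initial graded complement $Z_0$ of $\iota(L)^\perp$ in $V$ can be corrected to an isotropic complement orthogonal to $W$ by the standard graded Witt argument. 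Equivalently, one can invoke Corollary~\ref{cor:NoMatterForm} to first put $\Psi$ in a preferred normal form at the cost of replacing $M$ by an isomorphic $\ee$-representation; the freedom of choosing $m\in\G^\bullet(V,\Psi)M$ in the statement absorbs this change of basis.

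Next I would define the cocharacter $\lambda:\CC^\times\to\GL^\bullet(V)$ by
$$\lambda(t)|_{U_i}=t\cdot\mathrm{id}_{U_i},\qquad \lambda(t)|_{W_i}=\mathrm{id}_{W_i},\qquad \lambda(t)|_{Z_i}=t^{-1}\cdot\mathrm{id}_{Z_i}.$$
A direct check on each pair $(V_i,V_{\sigma(i)})$, using the orthogonality properties above, shows $\nabla\lambda(t)\circ\Psi\circ\lambda(t)=\Psi$, so $\lambda(t)\in\G^\bullet(V,\Psi)$.

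Finally, by Lemma~\ref{Lem:NperpNabla} both $\iota(L)$ and $\iota(L)^\perp$ are subrepresentations of $M$, so writing each $M_\alpha$ in block form with respect to $V=U\oplus W\oplus Z$ kills the three strictly lower-triangular blocks. Conjugating by $\lambda(t)$ rescales the $(U,W)$- and $(W,Z)$-blocks by $t$ and the $(U,Z)$-block by $t^2$, while fixing the diagonal blocks. Letting $t\to 0$ yields the block-diagonal representation whose three summands are $\iota(L)$, $\iota(L)^\perp/\iota(L)$ and $V/\iota(L)^\perp\simeq \nabla L$ (again by Lemma~\ref{Lem:NperpNabla}). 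With the block decomposition of $\Psi$ induced by Step~1, this limit is isomorphic as an $\ee$-representation to $L\oplus\nabla L\oplus(\iota(L)^\perp/\iota(L))$, which is precisely what is required. I expect the main obstacle to be the first step: producing a graded hyperbolic decomposition simultaneously adapted to the $\QQ_0$-grading and to the $\sigma$-symmetric $\ee$-form; once this normal form is available, the cocharacter $\lambda(t)$ is essentially forced and the limit computation is immediate.
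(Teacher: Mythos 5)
Your proposal matches the paper's proof in all essentials: both fix a graded Witt (hyperbolic) decomposition $V=U\oplus W\oplus Z$ adapted to the filtration $\iota(L)\subset\iota(L)^\perp\subset V$, define the cocharacter acting by $t$, $1$, $t^{-1}$ on the three pieces, verify it preserves $\Psi$, and contract the upper-triangular off-diagonal blocks of $M_\alpha$ as $t\to 0$. The paper obtains the normal form by directly choosing a basis of $\iota(L)_i$, extending to $\iota(L)^\perp_i$ and completing, which is exactly the graded Witt normalization you describe; your alternative of invoking Corollary~\ref{cor:NoMatterForm} is also sound, and the rest of the argument is identical.
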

\begin{proof}
Since $\iota(L)\subset V$ is isotropic, for every $i\in\QQ_0$ we can choose a basis of $\iota(L)_i\subset V_i$, extend it to a basis of $\iota(L)^\perp_i\subset V_i$ and then complete it to a basis $\mathcal{B}_i$ of $V_i$ so that the $\ee$-form $\Psi=\sum_{i\in\QQ_0}\Psi_i:V\rightarrow \nabla V$ is represented in the basis $\mathcal{B}=\cup_{i\in\QQ_0}\mathcal{B}_i$ of $V$ and the dual basis $\mathcal{B}^\ast$ of $\nabla V$ by the matrices
\[
\Psi_i=\left(\begin{array}{ccc}0&0&\ee\mathbf{1}\\0&\varphi_i&0\\\mathbf{1}&0&\end{array}\right): L_i\oplus Y_i\oplus \nabla L_i\rightarrow  \nabla  L_i\oplus \nabla Y_i\oplus  L_i,
\]
where $Y_i$ denotes the span of the basis elements of $\iota(L)^\perp_i$ which do not belong to $\iota(L)_i$ and $\varphi=\oplus_i\varphi_i$ is the induced $\ee$-form on $Y=\oplus Y_i$. 
\\[1ex]
Let $\alpha:i \rightarrow j$ be an arrow of $\mathcal{Q}$. The linear map
$M_{\alpha}:L_i\oplus Y_i\oplus \nabla L_i\rightarrow L_j\oplus Y_j\oplus \nabla L_j$
is represented in the basis $\mathcal{B}$ by a block matrix of the form
$$
M_{\alpha}=\left(\begin{array}{ccc}L_{\alpha}&\nu_{\alpha}&\xi_{\alpha}\\
0&Y_{\alpha}&\mu_{\alpha}\\
0&0&\nabla L_{\alpha}\end{array}\right)
$$
where $Y=\iota(L)^\perp/\iota(L)$ (there are zeros below the diagonal because both $\iota(L)$ and $\iota(L)^\perp$ are subrepresentations of $M$).  For $t\in\CC^\ast$, let $\lambda(t)=(\lambda(t)_i)_{i\in\QQ_0}\in \GL^\bullet(V)$ be the one-parameter subgroup whose $i$-th component $\lambda(t)_i: L_i\oplus Y_i\oplus \nabla L_i\rightarrow L_i\oplus Y_i\oplus \nabla L_i$ is represented in the basis $\mathcal{B}_i$ by the block matrix
$$
\lambda(t)_i=\left(\begin{array}{ccc}t\mathbf{1}&0&0\\0&\mathbf{1}&0\\0&0&t^{-1}\mathbf{1}\end{array}\right).
$$
An immediate calculation shows that  
$\nabla \lambda(t) \Psi \lambda(t) = \Psi$
and hence $\lambda(t)$ is a subgroup of $\G^\bullet(V,\Psi)$. Let  $M(t):= \lambda(t)\cdot M$, i.e. 
\[M(t)_{\alpha}=\lambda(t)_j M_\alpha\lambda(t)_i^{-1}=\left(\begin{array}{ccc}L_{\alpha}&t\nu_{\alpha}&t^2\xi_{\alpha}\\
0&Y_{\alpha}&t\mu_{\alpha}\\
0&0&\nabla L_{\alpha}\end{array}\right) \]
Thus,  $M(t)\in\G^\bullet(V,\Psi) M$ for $t\neq 0$ and $\lim_{t\rightarrow 0}M(t)\cong L \oplus Y\oplus \nabla L$. \qedhere
\end{proof}
\noindent
\begin{definition}\label{Def:ExtSymmOrder}
Let $M,N\in R(\A,V)^\ee$ 
\begin{equation}\label{Eq:DefDegExtSym}
\xymatrix@R=0pt{
M\leq_{\Ext}^\ee N \ar@{<=>}^(.25){def}[r]& \textrm{There exist }\ee-\textrm{representations }M_1,\cdots, M_k\in R(\A,V)^\ee\\ &\textrm{ such that for every }i=2,\cdots, k\textrm{ there exists a short}\\&\textrm{exact sequence }0\rightarrow L_i\rightarrow M_{i-1}\rightarrow V_i\rightarrow 0\\& \textrm{ such that }M_1=M, M_k=N,
L_i\textrm{ is isotropic in }M_{i-1}\\&\textrm{ and }M_i\simeq L_i\oplus \nabla L_i\oplus L_i^{\perp}/L_i.}
\end{equation}
\end{definition}
\noindent
Theorem~\ref{Thm:IsoDeg} implies at once:
\begin{cor}\label{cor:ExtDeg}
$\xymatrix{\leq_{\Ext}^\ee\ar@{=>}[r]&\leq_{\deg}^\ee}$.
\end{cor}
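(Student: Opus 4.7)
The plan is to deduce the corollary from Theorem~\ref{Thm:IsoDeg} by a straightforward induction on the length of the chain witnessing $M\leq_{\Ext}^\ee N$. Recall from Definition~\ref{Def:ExtSymmOrder} that $M\leq_{\Ext}^\ee N$ means there is a sequence $M=M_1,M_2,\ldots,M_k=N$ of $\ee$-representations together with, for each $i=2,\ldots,k$, a short exact sequence $0\to L_i\to M_{i-1}\to V_i\to 0$ in which $L_i$ is isotropic in $M_{i-1}$ and $M_i\simeq L_i\oplus\nabla L_i\oplus L_i^\perp/L_i$.

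First I would record transitivity of $\leq_{\deg}^\ee$, which is immediate from the definition via orbit closures: if $N\in\overline{\G^\bullet(V,\Psi)\,M}$, then $\overline{\G^\bullet(V,\Psi)\,N}\subseteq\overline{\G^\bullet(V,\Psi)\,M}$, since orbit closures are closed under the group action. Hence $N\leq_{\deg}^\ee P$ together with $M\leq_{\deg}^\ee N$ yields $M\leq_{\deg}^\ee P$.

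The key single step in the induction is precisely the content of Theorem~\ref{Thm:IsoDeg}: for the isotropic inclusion $L_i\hookrightarrow M_{i-1}$ one has
\[
M_{i-1}\leq_{\deg}^\ee L_i\oplus\nabla L_i\oplus L_i^\perp/L_i \;\simeq\; M_i,
\]
realised explicitly by the one-parameter subgroup $\lambda(t)\in\G^\bullet(V,\Psi)$ constructed in the proof of that theorem. Starting from the trivial base case $k=1$ (where $M=N$), the inductive step $M\leq_{\deg}^\ee M_{k-1}$ combined with $M_{k-1}\leq_{\deg}^\ee M_k=N$ and transitivity gives $M\leq_{\deg}^\ee N$, as required.

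There is no real obstacle: the whole corollary is essentially an iteration of Theorem~\ref{Thm:IsoDeg} along the chain provided by the definition of $\leq_{\Ext}^\ee$, the only conceptual point being the (immediate) transitivity of the symmetric degeneration order.
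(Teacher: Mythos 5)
Your proof is correct and is essentially the paper's own argument: the paper simply states that Theorem~\ref{Thm:IsoDeg} implies the corollary ``at once,'' and what you have written out is exactly the implicit induction along the chain in Definition~\ref{Def:ExtSymmOrder}, using Theorem~\ref{Thm:IsoDeg} for each step together with the transitivity of $\leq_{\deg}^\ee$.
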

\noindent
The opposite implication is not true in general, since an $\ee$-representation could degenerate to an indecomposable $\ee$-representation of type $(I)$ (see \cite{BC}). This is not the case for symmetric quivers of type $A$, though, as we show in Section \ref{Sec:MainResult}.

\section{Degenerations and Bongartz's cancellation theorem}\label{Sec:Orders}
\noindent In this section we recall the definition of the degeneration, the Hom, and the Ext order on the variety of $\A$-representations of a fixed dimension vector. Then we recall Bongartz's cancellation theorem and how to use it to show that these three orders coincide for Dynkin quivers. This argument is analogous to the argument that we will use to prove the main theorem  in Section~\ref{Sec:MainResult}. 
\subsection{Degeneration, Hom and Ext order}\label{Subsec:DegHomExt}
Let $\A$ be a quiver algebra. Given two $\A$-representations $X$ and $Y$ we use the notation $[X,Y]:=\textrm{dim Hom}_{\A}(X,Y)$ and $[X,Y]^1:=\textrm{dim Ext}^1_{\A}(X,Y)$. We denote by $\ind(\A)$ the set of isoclasses of indecomposable $\A$-modules.
Let $\mathbf{d}\in \ZZ_{\geq0}^{Q_0}$ be a dimension vector and let $V$ be a $Q_0$-graded complex vector space of dimension vector $\mathbf{d}$. 
\begin{itemize}
\item
The \emph{degeneration order} $\degg$ on $R(\A,V)$ is defined by 
$$
\xymatrix{
M\degg N\ar@{<=>}^(.45){def}[r]& N\in \overline{\GL^\bullet(V) M}.
}
$$
\item The Hom-order $\leq_{\Hom}$ on $R(\A,V)$ is defined by 
$$
\xymatrix{
M\leq_{\Hom} N\ar@{<=>}^(.3){def}[r]& [M,E]\leq [N,E] \textrm{ for every }E\in\ind(\A).
}
$$
\item 
The Ext-order $\leq_{\Ext}$ on $R(\A,V)$ is defined in \eqref{Eq:ExtOrder}.
\end{itemize} 
These orders were first considered by Abeasis-Del Fra for quivers of type $A$ \cite{AbeasisDelFraEquioriented, AbeasisDelFra}, and then extended to general quiver algebras by Riedtmann \cite{Riedtmann}, Bongartz \cite{Bongartz} and Zwara \cite{Zwara}.
It is well-known that 
$$
\xymatrix{
\leq_{\Ext}\ar@{=>}[r]&\degg\ar@{=>}[r]&\leq_{\Hom}
}
$$
(see \cite[Proposition~2.1]{Riedtmann}, \cite[Lemma~1.1]{Bongartz}).
It is a remarkable fact that these three orders are equivalent for every algebra of finite representation type whose indecomposables are all rigid \cite[Theorem~2]{Zwara}. In particular, they are all equivalent for Dynkin quivers. In the subsequent Section~\ref{Sec:TheoremBongartz} we recall the proof of this result.

\subsection{Generic quotients and generic kernels}\label{Sec:GenericQuotKer}
We recall from \cite[Section~2.3]{Bongartz} the crucial definition of generic quotients and of generic kernels.

Let $M\in R(\A, V)$. Let $\iota:L\hookrightarrow M$ be a subrepresentation of dimension vector $\mathbf{e}$. For every $i\in\QQ_0$, we fix a basis $\{v_1^{(i)},\cdots, v_{e_i}^{(i)}\}$ of the vector subspace $\iota(L)_i\subseteq V_i$ and extend it to a basis $\mathcal{B}_i$ of $V_i$. With this choice we identify $E_i=\textrm{End}(V_i)$ with the vector space of $d_i\times d_i$-matrices and $\GL^\bullet(V)\subset E=\oplus_{i\in \QQ_0} E_i$ as the open subset  of tuples of invertible matrices.  Moreover, $M=(M_\alpha)_\alpha$ is represented in these bases as a collection of $2\times 2$ upper triangular block matrices of the form $\left(\begin{array}{cc}{\scriptstyle L_{\alpha}}&{\scriptstyle \star}\\{\scriptstyle 0}&{\scriptstyle \star} \end{array}\right)$.  Let $G(L,M)\subset \GL^\bullet(V)$ be the set of  all collections $g=(g_i)_{i\in\QQ_0}$ of invertible matrices such that for every arrow $\alpha:i\rightarrow j$ of $\QQ$ the matrix $g_j^{-1} M_\alpha g_i$ is a $2\times 2$ upper triangular block matrix having $L_\alpha$ as upper left block. Let $U=\iota(L)\subset V$ and let us consider the representation variety $R(\QQ,V/U)$. Bongartz considers the map $\varphi: G(L,M)\rightarrow R(\QQ,V/U)$ which associates to $g$ the collection $Q=(Q_\alpha)_\alpha$ of matrices such that $Q_\alpha$ is the lower right block of $g_j^{-1} M_\alpha g_i$. The image of $\varphi$ is the set of all possible quotients of $M$ by $L$ and it is denoted by $\textrm{Quot}(L,M)$. 
\begin{lem}
$\Quot(L,M)$ is an irreducible, constructible subset of $R(\QQ, V/U)$ which is stable by the action of $\GL^\bullet(V/U)$. 
\end{lem}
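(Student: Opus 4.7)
The plan is to verify the three claims in turn. Constructibility is the easiest: $G(L,M)$ is a closed subvariety of $\GL^\bullet(V)$, cut out by the polynomial conditions that the lower-left block of $g_j^{-1} M_\alpha g_i$ vanish and the upper-left block equal the fixed matrix $L_\alpha$, for every arrow $\alpha : i \to j$. Hence by Chevalley's theorem its image $\Quot(L,M) = \varphi(G(L,M))$ under the morphism $\varphi$ is constructible in $R(\QQ, V/U)$. For $\GL^\bullet(V/U)$-stability, I would lift each $h \in \GL^\bullet(V/U)$ to the block-diagonal element $\tilde{h} \in \GL^\bullet(V)$ that acts as the identity on $U$ and as $h$ on the chosen complement of $U$; right multiplication by $\tilde{h}$ preserves $G(L,M)$, and a direct block computation shows that $\varphi(g\tilde{h})$ lies in the $\GL^\bullet(V/U)$-orbit of $\varphi(g)$, proving stability.

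Irreducibility is the substantial part. I would introduce the open subset $\mathcal{E} \subset \Hom_\A(L,M)$ consisting of injective $\A$-module homomorphisms; it is nonempty (containing $\iota$) and hence irreducible, being an open subset of a vector space. After fixing a graded complement $U^c$ of $U$ in $V$ (spanned by the remaining basis vectors), I would exhibit an isomorphism of varieties between $G(L,M)$ and the open subvariety of $\mathcal{E} \times \prod_i \Hom(U^c_i, V_i)$ cut out by the direct-sum condition $\iota'(L) \oplus \psi(U^c) = V$. Explicitly, this isomorphism sends $g$ to the pair $(\iota_g, g|_{U^c})$, where $\iota_g : L \to M$ is the injection determined by $g|_U$ through the fixed identification $L \simeq U$. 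Since a nonempty open subvariety of a product of irreducible varieties is irreducible, $G(L,M)$ is irreducible, and hence so is its morphic image $\Quot(L,M)$.

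The main obstacle is establishing the dictionary ``$g \in G(L,M)$ if and only if $\iota_g : L \to M$ is an $\A$-module morphism'', which is what makes $(\iota_g, g|_{U^c})$ actually land in $\mathcal{E} \times \prod_i \Hom(U^c_i, V_i)$ to begin with. This is purely a matter of unpacking the block structure of $g_j^{-1} M_\alpha g_i$: the vanishing of the lower-left block says that $g(U)$ is an $M$-subrepresentation, while the equality of the upper-left block with $L_\alpha$ says the induced $\A$-structure on $g(U)$ matches $L$ under the isomorphism $g|_U : U \to g(U)$. Once this dictionary is in hand, the remainder of the irreducibility argument is purely formal, as is the verification that the inverse map $(\iota', \psi) \mapsto g$ is a morphism.
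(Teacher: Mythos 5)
Your proof is correct and follows essentially the same route as the paper: the paper observes that the membership conditions $M_\alpha\overline{g_{s(\alpha)}}=\overline{g_{t(\alpha)}}L_\alpha$ are linear in $g$, so $G(L,M)$ is the intersection of $\GL^\bullet(V)$ with a linear subspace and hence irreducible, while your dictionary between $g$ and pairs $(\iota_g, g|_{U^c})$ is just an explicit description of that same linear subspace as $\Hom_\A(L,M)\times\prod_i\Hom(U_i^c,V_i)$. Both arguments then conclude by Chevalley's theorem, and the stability check is the same routine base-change computation.
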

\begin{proof}
Given $f_i\in E_i$ we denote by $\overline{f}_i$ the matrix consisting of the first $e_i$ columns of $f_i$. Then  Bongartz points out that $g\in G(L,M)$ if and only if $M_\alpha\overline{g_{s(\alpha)}}=\overline{g_{t(\alpha)}} L_\alpha$. These equations are linear in the entries of the matrices $\overline{g}$ and hence of $g$. Thus $G(L,M)$ is an open and dense subset of the vector subspace $E(L,M)\subset E$ consisting of all $f=(f_i)$ such that $M_\alpha\overline{f_{s(\alpha)}}=\overline{f_{t(\alpha)}} L_\alpha$. In particular, $G(L,M)$ is an irreducible locally closed set. Thus, $\textrm{Quot}(L,M)$ is a constructible irreducible set, by Chevalley's theorem. The fact that it is stable by base change is obvious, since if $Q$ is a quotient of $M$ by $L$ and $Q'\simeq Q$ then $Q'$ is a quotient of $M$ by $L$. 
\end{proof}
\noindent
A quotient $Q$ of $M$ by $L$ is called a \emph{generic quotient} of $M$ by $L$ if its orbit is dense in $\textrm{Quot}(L,M)$. In this case, a monomorphism $\iota:L\hookrightarrow M$ such that $Q\simeq M/\iota(L)$ is called a \emph{generic embedding of $L$ into $M$}.  Of course, generic quotients and generic embeddings might not exist. They exist in case $\A$ has finite representation type. In particular, they exists in case $\A$ is the path algebra of a Dynkin quiver.

Dually, let $\xymatrix{\pi:M\ar@{->>}[r]&Q}$  be a quotient of dimension vector $\mathbf{d-e}$. Then as above one constructs the set of all possible kernels of an epimorphism from $M$ to $Q$ and shows that this set is a construcible irreducible subset of $R(\QQ,W)$, stable by base change, for an appropriate vector subspace $W$ of $V$ of dimension vector $\mathbf{e}$. The generic orbit in this set is called the \emph{generic kernel} of $M$ onto $Q$. 

\subsection{The cancellation theorem of Bongartz}\label{Sec:TheoremBongartz}
We recall  the famous ``cancellation theorem'' of Bongartz \cite[Theorem~2.4]{Bongartz} and how to use it to prove the equivalence of the three orders of Subsection~\ref{Subsec:DegHomExt} for Dynkin quivers. 
\begin{thm}[Cancellation theorem]\cite[Theorem~2.4]{Bongartz}\label{Thm:Bongartz}
Let $M, N\in R(\A,V)$ such that $M\degg N$. 
\begin{enumerate}
\item Let $L\in \Rep(\QQ)$ such that $[L,M]=[L,N]$. Then if $L$ embeds into $N$, $L$ embeds into $M$, too. In this case, if the generic quotient of $M$ by $L$ exists, it degenerates to every quotient of $N$ by $L$.
\item Let $Q\in \Rep(\QQ)$ such that $[M,Q]=[N,Q]$. Then if $N$ surjects onto $Q$, $M$ surjects onto  $Q$, too. In this case, if the generic kernel of $M$ onto  $Q$ exists, it degenerates to the kernel of every surjective map from $N$ to $Q$.
\end{enumerate}
\end{thm}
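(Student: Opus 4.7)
The plan is to reduce everything to upper semicontinuity of $\dim\Hom_\A(L,-)$ and $\dim\Hom_\A(-,Q)$ on $R(\A,V)$, combined with a standard incidence-variety argument. First I would note that $M\degg N$ gives $N\in\overline{\GL^\bullet(V)\cdot M}$, so upper semicontinuity forces $[L,X]\geq[L,M]$ for every $X$ in the closure, with equality on a $\GL^\bullet(V)$-stable open subset $\mathcal{U}$. The hypothesis $[L,M]=[L,N]$ guarantees $N\in\mathcal{U}$, so on $\mathcal{U}$ the function $X\mapsto[L,X]$ is constantly equal to $[L,M]$ and contains both orbits under consideration.

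For part (i), I would introduce the incidence variety
\[
\mathcal{V}=\bigl\{(X,f)\in\overline{\GL^\bullet(V)\cdot M}\times\Hom_\mathrm{k}(L,V)\ \bigm|\ f\in\Hom_\A(L,X)\bigr\},
\]
whose first projection $\pi$ has $\Hom_\A(L,X)$ as fibre over $X$. Over $\mathcal{U}$ this projection is a vector bundle of rank $[L,M]$, so $\pi^{-1}(\mathcal{U})$ is irreducible. Injectivity of the $\mathrm{k}$-linear map $f$ is an open condition, hence the locus $\mathcal{V}^{\mathrm{inj}}$ of pairs with injective second entry is open in $\mathcal{V}$. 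The hypothesis $L\hookrightarrow N$ places a point of $\mathcal{V}^{\mathrm{inj}}$ above $N$, so $\mathcal{V}^{\mathrm{inj}}\cap\pi^{-1}(\mathcal{U})$ is nonempty and hence dense in the irreducible $\pi^{-1}(\mathcal{U})$. Its image under $\pi$ is then a constructible $\GL^\bullet(V)$-stable subset of $\overline{\GL^\bullet(V)\cdot M}$ containing a nonempty open, and so contains the dense orbit $\GL^\bullet(V)\cdot M$. This produces an embedding $L\hookrightarrow M$.

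For the statement about generic quotients I would fix a graded subspace $W\subseteq V$ of the correct dimension vector and restrict attention to the Bongartz slice of Subsection~\ref{Sec:GenericQuotKer}: on $G(L,-)$ the assignment $(X,f)\mapsto X/f(L)$ is the morphism $\varphi$ of that subsection, landing in $R(\A,V/W)$. Globalising over $\pi^{-1}(\mathcal{U})\cap\mathcal{V}^{\mathrm{inj}}$ yields an irreducible constructible subset of $R(\A,V/W)$ whose fibre over $\GL^\bullet(V)\cdot M$ is exactly $\Quot(L,M)$ and which contains, in its fibre over $N$, every quotient of $N$ by $L$. Since $\Quot(L,M)$ is irreducible and the orbit of the generic quotient of $M$ by $L$ is dense in it, the closure of that orbit contains the whole image, in particular every quotient of $N$ by $L$.

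Part (ii) is formally dual: replace $\Hom_\A(L,-)$ by $\Hom_\A(-,Q)$, swap open injectivity for open surjectivity, and replace the generic quotient by the generic kernel. The main obstacle is the bookkeeping in the second half of each statement: the underlying vector space $V/f(L)$ (respectively $\ker g$) varies with $(X,f)$ (respectively $(X,g)$), so one must carefully use the Bongartz slice to land inside one fixed representation variety and verify that irreducibility of $\pi^{-1}(\mathcal{U})$ together with density of the generic orbit really translates into a closure statement in $R(\A,V/W)$ rather than only a set-theoretic inclusion of orbits.
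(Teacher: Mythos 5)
The paper does not prove this theorem: it is quoted verbatim from Bongartz \cite[Theorem~2.4]{Bongartz}, and Subsection~\ref{Sec:GenericQuotKer} only recalls the slice $G(L,M)$ and the map $\varphi$ so that ``generic quotient'' and ``generic kernel'' make sense. There is therefore no in-paper argument to compare against, so I will assess the proposal on its own merits.

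Your treatment of the existence part of (i) is correct and complete. The semicontinuity of $[L,-]$ makes $\mathcal{U}=\{X\in\overline{\GL^\bullet(V)M}:[L,X]=[L,M]\}$ a $\GL^\bullet(V)$-stable open neighbourhood of $N$; over $\mathcal{U}$ the fibres of $\pi$ form a vector subbundle of the trivial bundle $\mathcal{U}\times\Hom_\mathrm{k}^\bullet(L,V)$ (kernel of a bundle map of constant rank), so $\pi^{-1}(\mathcal{U})$ is irreducible; the injectivity locus is open and non-empty (above $N$), hence dense; and $\pi$ is open on the total space of a vector bundle, so the image is an open, $\GL^\bullet(V)$-stable subset of $\mathcal{U}$ which therefore meets and hence contains the dense orbit $\GL^\bullet(V)\cdot M$.

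For the degeneration part the plan is right, but the two things you flag really do need to be made explicit, because as written the key sentence jumps. The assignment $(X,f)\mapsto X/f(L)$ does not literally land in a fixed $R(\A,V/W)$, so one must globalise the slice: set $\widehat{\mathcal{G}}=\{(X,g):X\in\mathcal{U},\ g\in G(L,X)\}$. This surjects onto $\pi^{-1}(\mathcal{U})\cap\mathcal{V}^{\mathrm{inj}}$ via $g\mapsto\bar g$ (restriction to the fixed subspace $W$), and since the defining equations $X_\alpha\bar g_{s(\alpha)}=\bar g_{t(\alpha)}L_\alpha$ are linear and constrain only $\bar g$, $\widehat{\mathcal{G}}$ is a non-empty open subset of a vector bundle in the remaining columns of $g$, hence irreducible; it carries a genuine morphism $\Phi$ to $R(\A,V/W)$. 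Then you still need $\Quot(L,M)$ to be \emph{dense} (not merely contained) in $\Phi(\widehat{\mathcal{G}})$: this holds because $\GL^\bullet(V)\cdot M$ is open and dense in its closure, so its $\pi_1$-preimage in the irreducible $\widehat{\mathcal{G}}$ is open and dense, and its $\Phi$-image is exactly $\Quot(L,M)$, which is therefore dense in the total image. Only now does the density of the generic quotient's orbit in $\Quot(L,M)$ give that its closure contains the whole of $\Phi(\widehat{\mathcal{G}})$, and in particular every quotient of $N$ by $L$. With these two points filled in, the argument is complete, and part (ii) is indeed formally dual. This is the route Bongartz's own slice construction is designed for, so it is very likely close to his original proof.
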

\noindent
A connected quiver $\QQ$ is called a Dynkin quiver if it is an orientation of a simply-laced Dynkin diagram of type $A$, $D$ or $E$. 
\begin{cor}\label{Cor:EquivalenceOrdersDynkin}
Let $\QQ$ be a Dynkin quiver and let $\A=\mathrm{k}\QQ$ be its path algebra. Then the three partial orders $\leq_{\Ext}$, $\degg$ and $\leq_{\Hom}$ are equivalent on $R(\A,V)$, for all $V$.
\end{cor}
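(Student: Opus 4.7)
The implications $\leq_{\Ext} \Rightarrow \degg$ and $\degg \Rightarrow \leq_{\Hom}$ have been recorded in Subsection~\ref{Subsec:DegHomExt} for arbitrary quiver algebras, so closing the cycle of equivalences reduces to establishing $\degg \Rightarrow \leq_{\Ext}$ when $\QQ$ is Dynkin. I plan to argue by Noetherian induction on the degeneration poset on $R(\A,V)$, which is well founded because $\A = \mathrm{k}\QQ$ has finite representation type and hence only finitely many $\GL^\bullet(V)$-orbits on $R(\A,V)$.

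For the inductive step, assume $M \degg N$ with $M \not\simeq N$. The goal is to construct an indecomposable $L$ together with a non-split short exact sequence $0 \to L \to M \to W \to 0$ such that $L \oplus W \degg N$; the resulting elementary Ext-step $M \leq_{\Ext} L \oplus W$, combined with the inductive conclusion $L \oplus W \leq_{\Ext} N$ and transitivity of $\leq_{\Ext}$, then finishes the argument. To produce this data I will invoke Theorem~\ref{Thm:Bongartz}(1): provided an indecomposable $L$ is known to be a direct summand of $N$ (so $N = L \oplus N'$ and the inclusion $L \hookrightarrow N$ is tautological) and to satisfy $[L, M] = [L, N]$, the cancellation theorem supplies an embedding $\iota : L \hookrightarrow M$ whose generic quotient $W$ degenerates to $N'$, whence $L \oplus W \degg L \oplus N' = N$.

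The main obstacle is locating such an $L$ for which the numerical input $[L, M] = [L, N]$ holds. The Hom-order provides only the one-sided inequality $[M, X] \leq [N, X]$; however, under the Dynkin hypothesis $\A$ is hereditary and all indecomposables are rigid, so the Euler-form identity $[X, Y] - [X, Y]^1 = \langle \dim X, \dim Y \rangle$ (with $\dim M = \dim N$) together with the Auslander--Reiten formula $[X, Y]^1 = [Y, \tau X]$ transfers this to the symmetric inequality $[X, M] \leq [X, N]$ for every indecomposable $X$. To upgrade an inequality to an equality for some summand $L$ of $N$, I would choose $L$ minimal in the Auslander--Reiten order among the indecomposable summands of $N$ whose multiplicity in $N$ strictly exceeds that in $M$: the almost split sequence ending at $L$ then forces $[L, M] = [L, N]$ by a standard comparison of Hom-dimensions along the AR-predecessors of $L$, the minimality of the choice making equality unavoidable. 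The remainder of the argument is a mechanical application of Theorem~\ref{Thm:Bongartz}, Krull--Schmidt, and representation-finiteness, and this whole strategy is precisely the template the introduction announces to adapt to the symmetric setting in Section~\ref{Sec:MainResult}.
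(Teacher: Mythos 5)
Your broad template (use Bongartz's cancellation theorem with a $\preceq$-minimal summand and induct) is indeed the paper's, but there are two genuine gaps and one unsubstantiated claim.

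First, the opening logical step is wrong: knowing $\leq_{\Ext}\Rightarrow\degg\Rightarrow\leq_{\Hom}$ and then proving $\degg\Rightarrow\leq_{\Ext}$ only shows $\leq_{\Ext}$ and $\degg$ coincide and both imply $\leq_{\Hom}$. It does \emph{not} close the cycle. You still need $\leq_{\Hom}\Rightarrow\degg$, which is a separate and substantive theorem; the paper does not reprove it but explicitly invokes \cite[Prop.~3.2]{Bongartz}, which works for all representation-directed algebras. Omitting this leaves the statement unproved. Second, Noetherian induction on the degeneration poset does not descend in general. If $L$ already appears as a direct summand of $M$ (which can happen even when $\mu_N(L)>\mu_M(L)$, so long as $\mu_M(L)\geq 1$), then the generic quotient $W$ of $M$ by $L$ satisfies $L\oplus W\cong M$: indeed $M/L$ (the direct complement) is a quotient, so the generic quotient degenerates to it, while the Ext-degeneration gives $M\degg L\oplus W$; together these force $L\oplus W\cong M$. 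Thus the short exact sequence is split and you have produced nothing strictly below $M$ in the poset, so the inductive hypothesis does not apply. The paper avoids this entirely by inducting on total dimension: the generic quotient $Q$ and the complement $\overline{N}$ always have strictly smaller dimension, whether or not the sequence splits.

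Finally, your choice of $L$ differs from the paper's, and the justification that ``the almost split sequence ending at $L$ forces $[L,M]=[L,N]$'' is left as a gesture. The paper simply takes $L$ to be $\preceq$-minimal among \emph{all} indecomposable summands of $N$, notes $[L,N]^1=0$ by representation-directedness, applies semicontinuity of $\Ext^1$ and the Euler form, and is done. Your choice ($\preceq$-minimal among summands with $\mu_N>\mu_M$) does actually work, but the argument one needs is different from what you sketched: use $[L,N]-[L,M]=\delta_{M,N}(\tau L)$ via the Euler form and the AR-formula, and then observe that every $X$ with $[X,\tau L]\neq 0$ satisfies $X\prec L$, hence $\mu_N(X)\leq\mu_M(X)$ by minimality, so $\delta_{M,N}(\tau L)\leq 0$; combined with $\delta_{M,N}\geq 0$ this gives equality. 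This is a valid alternative, but it buys nothing over the paper's cleaner choice and must be spelled out rather than asserted.
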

\begin{proof}
Let $M, N\in R(\A,V)$ such that $M\degg N$. Let us prove that $M\leq_{\Ext} N$ by induction on their total dimension. If the dimension is zero, there is nothing to prove. Suppose that the dimension is greater than zero. Since $\QQ$ is Dynkin, its path algebra $\A$ is representation-directed (\cite[Lemma~IX.1.1 and Definition~IX.3.2]{ASS}). In particular, there exists a partial order $\preceq$ on $\ind(\A)$ defined by $L\preceq E$ if there is a sequence of non-zero non-isomorphisms $L=L(1)\rightarrow L(2)\rightarrow\cdots \rightarrow L(h)=E$ for some $L(i)\in\ind(\A)$. Let $L$ be a $\preceq$-minimal indecomposable direct summand of $N$. Then $[L,N]^1=0$. By upper semicontinuity, we have $[L,M]^1\leq [L,N]^1=0$ which implies, using the homological interpretation \eqref{Eq:HomologicalInterpretationEulerForm} of the Euler-Ringel-form for hereditary algebras, that $[L,N]=[L,M]$. By Theorem~\ref{Thm:Bongartz}, $L$ embeds into $M$. Let $Q$ be the generic quotient of $M$ by $L$. Then $Q\degg \overline{N}$ where $\overline{N}$ is the direct complement of $L$ in $N$. By induction we get $Q\leq_{\Ext} \overline{N}$. Since there is a short exact sequence $0\rightarrow L\rightarrow M\rightarrow Q\rightarrow 0$ we get $M\leq_{\Ext} L\oplus Q\leq_{\Ext} L\oplus \overline{N}=N$, as claimed. 
The proof of the implication $\xymatrix@1{\leq_{\Hom}\ar@{=>}[r]&\degg}$ is given in \cite[Proposition~3.2]{Bongartz} for representation-directed algebras and it does not seem to simplify when restricted to Dynkin quivers. 
\end{proof}

\section{Quivers of type A}\label{Sec:RepTheoryTypeA}
\noindent
In this section we recall with proofs some useful features of the representation theory and the Auslander-Reiten theory of quivers of type $A$.  In the last subsection, Proposition~\ref{Prop:GenQuot} contains a description of the generic quotients by indecomposables which turns out to be very useful in the proof of Theorem~\ref{Thm:MainThm}.  Throughout the section, $\QQ$ denotes a quiver of type $A_n$ and $\A$ denotes its complex path algebra. 
\subsection{Representation Theory}
We denote the category of finite-dimensional complex $\QQ$-representations by $\Rep(\QQ)$ . 
The category $\Rep(\QQ)$ is abelian and Krull-Schmidt.
The set $\ind(\QQ$) of isoclasses of indecomposable $\QQ$--representations is finite by Gabriel's Theorem and the dimension vector induces a bijection between $\ind(\QQ)$ and the positive roots of the root system of type $A_n$. We denote by $\alpha_i$, $i=1,\cdots, n$ the simple roots. A positive root $\alpha_{i,j}=\sum_{k=i}^j\alpha_k$,  $1\leq i\leq j\leq n$, is the dimension vector of the indecomposable $\QQ$-representation that we denote by $U_{i,j}$, here $(U_{i,j})_k=\CC$ for $i\leq k\leq j$ and every linear map between two non-zero vector spaces is an isomorphism.  Given a vertex $j\in\QQ_0$, we denote by $S_j=U_{j,j}$ the corresponding simple representation, by $P_j$ its projective cover, and by $I_j$ its injective envelope. More concretely, $P_j=U_{t_1,t_2}$ and $I_j=U_{s_1,s_2}$ where $s_1,s_2,t_1,t_2$ are the vertices (possibly equal to $j$) such that there are maximal directed paths $\xymatrix@1{j\ar@{..>}[r]&t_k}$ and $\xymatrix@1{s_k\ar@{..>}[r]&j}$ in $\QQ$ (i.e. $t_k$ is either equal to $j$ or it is a sink of $\QQ$ and $s_k$ is either equal to $j$ or it is a source of $\QQ$), for $k=1,2$. The \emph{radical} of $P_i$, i.e. the kernel of the map $\xymatrix@1{P_i\ar@{->>}[r]&S_i}$, is the direct sum of at most two indecomposable projectives $\rad(P_i)=P_{j_1}\oplus P_{j_2}$ where $j_1$ and $j_2$ are the terminal vertices of the arrows of $\QQ$ starting from $i$ (there can be at most two).

An indecomposable $\QQ$--representation $L$ is \emph{thin}, meaning that $\dim L_j=[P_j,L]=[L,I_j]\leq 1$ for every vertex $j\in \QQ_0$. 

Every $M\in\Rep(\QQ)$ admits a canonical projective resolution, or standard resolution (see e.g. \cite[page~7]{CB})
$$
\xymatrix{
0\ar[r]&\bigoplus_{\alpha:i\rightarrow j\in \QQ_1}P_j^{d_i}\ar[r]&\bigoplus_{i\in\QQ_0}P_i^{d_i}\ar[r]&M\ar[r]&0
}
$$
where $\mathbf{d}=\mathbf{dim}\,M$ is the dimension vector of $M$.  As a consequence, Rep($\QQ$) is hereditary, i.e. $\Ext^i(-,-)=0$ for $i\geq2$ or equivalently the subrepresentations of a projective are projective. Moreover, for every $M,N\in\Rep(\QQ)$ the following formula holds (apply $\Hom(-,N)$ to the standard resolution of $M$):
\begin{equation}\label{Eq:HomologicalInterpretationEulerForm}
[M,N]-[M,N]^1=\langle\mathbf{dim}\,M,\mathbf{dim}\,N\rangle_\QQ.
\end{equation}
Here,  for $\mathbf{e},\mathbf{d}\in\ZZ^{\QQ_0}$ the \emph{Ringel-Euler form} $\langle-,-\rangle_\QQ:\ZZ^{\QQ_0}\times\ZZ^{\QQ_0}\rightarrow\ZZ$ is defined as
$
\langle\mathbf{e},\mathbf{d}\rangle_\QQ=\sum_{i\in\QQ_0}e_id_i-\sum_{\alpha:i\rightarrow j\in\QQ_1}e_id_j.
$
We notice that
\begin{equation}\label{Eq:KthEntryDimVectEulerForm}
d_k=\langle\mathbf{d},\dimv I_k\rangle_\QQ=\langle\dimv P_k,\mathbf{d}\rangle_\QQ.
\end{equation}

Since $\ind(\QQ)$ is finite, for every dimension vector $\mathbf{d}$, the representation variety  $R_\mathbf{d}$ contains a dense $\G^\bullet_\mathbf{d}$-orbit. A point of this orbit is called a \emph{generic} $\QQ$-representation of dimension vector $\mathbf{d}$. Let $M$ be such a point. Then the dimension of its orbit equals the dimension of $R_\mathbf{d}$ which is $\sum_{i\rightarrow j\in\QQ_1}d_id_j$; the dimension of this orbit is also equal to $\dim \G^\bullet_\mathbf{d}-\dim \Aut(M)=\sum_{i\in\QQ_0}d_i^2-[M,M]$. Thus, by \eqref{Eq:HomologicalInterpretationEulerForm} we see that $M$ is a generic representation if and only if $[M,M]^1=0$. A $\QQ$-representation  is called \emph{rigid} if $[M,M]^1=0$. Thus $M$ is generic if and only if it is rigid. 

The \emph{Auslander-Reiten translate} $\tau\simeq D\Ext^1(-, A)$ is a  functor in $\Rep(\QQ)$, where $D=\Hom(-,\mathrm{k})$ denotes the standard $\mathrm{k}$-duality. It has a quasi-inverse  $\tau^-\simeq \Ext^1(D(-), \A)$ (see \cite[Section~6]{CB}). In particular, $\tau P$ and $\tau^-I$ are zero for every projective $P$ and every injective $I$. For $L\in\ind(\QQ)$ not projective, $\tau L$ is indecomposable and  $\tau^-\tau L\simeq L$. For $E\in\ind(\QQ)$ not injective,  $\tau^-E$ is  indecomposable and $\tau\tau^- E\simeq E$. In \cite[Section~3.1.2]{Schiffler}  several methods to compute $\tau L$ and $\tau^-L$ are collected. The following Auslander-Reiten formulas hold (see e.g \cite[Lemma~6.1]{CB}): 
\begin{equation}\label{Eq:ARFormula}
[L,E]^1=[E,\tau L]=[\tau^- E,L].
\end{equation}
For every $L\in\ind(\QQ)$ there  exist two vertices $i,j\in \QQ_0$ and two non-negative integers $s,t\geq0$ such that $L=\tau^{-s}P_i=\tau^t I_j$. 

A \emph{path} in Rep($\QQ$) is a sequence 
\begin{equation}\label{Eq:PathInAmod}
\xymatrix{
L_1\ar^{f_1}[r]&L_2\ar^{f_2}[r]&\cdots\ar^{f_k}[r]&L_{k+1}
}
\end{equation}
of non-zero non-isomorphisms between the indecomposable $\QQ$--representations $L_1$, $L_2$, $\cdots$, $L_{k+1}$ (notice that the composition of these maps can be zero). A path \eqref{Eq:PathInAmod} is cyclic if $L_1=L_k$. It is well-known that $\Rep(\QQ)$  is \emph{representation-directed} (see (\cite[Lemma~IX.1.1 and Definition~IX.3.2]{ASS}), i.e. it does not contain cyclic paths. 
\begin{lem}\label{Lemma:OneDimensional}
Let $L,E\in\ind(\QQ)$. Then 
\begin{enumerate}
\item $[L,E]\leq 1$ and $[L,E]^1\leq1$
\item  $[L,E][L,E]^1=0$.
\end{enumerate}
\end{lem}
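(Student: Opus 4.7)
For part (i), I would reduce both bounds to the thinness of indecomposable type-$A$ representations. For $[L, E] \leq 1$: pick a vertex $k$ in the support of $L$ which is a source of the subquiver supporting $L$, so that the canonical morphism $P_k \to L$ is surjective. Applying the left-exact functor $\Hom(-, E)$ to $P_k \twoheadrightarrow L$ yields an injection $\Hom(L, E) \hookrightarrow \Hom(P_k, E) \simeq E_k$, whose codomain has dimension at most $1$ since $E$ is thin. For the second bound $[L, E]^1 \leq 1$: if $L$ is projective it vanishes by hereditariness of $\Rep(\QQ)$; otherwise the Auslander-Reiten formula \eqref{Eq:ARFormula} gives $[L, E]^1 = [E, \tau L]$ with $\tau L$ indecomposable, so the $\Hom$-bound just established applies again.

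For (ii), I would argue by contradiction. Suppose $[L, E]$ and $[L, E]^1$ are both non-zero, hence both equal to $1$ by (i). Then $L$ is not projective, and by \eqref{Eq:ARFormula} we have $[E, \tau L] = [L, E]^1 \geq 1$, so there are non-zero morphisms $L \to E$ and $E \to \tau L$ between indecomposables. Writing $X \preceq Y$ to mean $X = Y$ or there is a path of non-zero non-isomorphisms between indecomposables from $X$ to $Y$, these existence statements give $L \preceq E \preceq \tau L$, hence $L \preceq \tau L$.

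To conclude, I want to establish the strict inequality $\tau L \prec L$ and combine it with $L \preceq \tau L$ to produce a cyclic path at $L$, contradicting the representation-directedness of $\Rep(\QQ)$ recalled at the beginning of this section. The strict inequality comes from the almost-split sequence $0 \to \tau L \to M \to L \to 0$: since this sequence does not split, neither $\tau L$ nor $L$ is a direct summand of $M$; writing $M = \bigoplus_k M_k$ into indecomposables, the induced components $\tau L \to M_k$ and $M_k \to L$ are therefore non-zero non-isomorphisms (in fact, irreducible morphisms), so any single $M_k$ produces a path $\tau L \to M_k \to L$ witnessing $\tau L \prec L$ strictly. I expect this last step, verifying from the almost-split property that no component of the middle term vanishes or is an isomorphism, to be the main conceptual point; once that is in hand, everything else is a direct application of the Auslander-Reiten machinery already collected in the preceding subsection.
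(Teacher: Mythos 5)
Your proof of part (ii) is correct and takes a mildly different route from the paper's: you convert $[L,E]^1\neq 0$ into $[E,\tau L]\neq 0$ via the AR formula, obtain $L\preceq E\preceq\tau L$, and then close the cycle using the almost split sequence ending in $L$. The paper instead uses $[L,E]^1\neq 0$ directly to produce a non-split sequence $0\to E\to F\to L\to 0$ and reads off the cycle $L\to E\to F_i\to L$. Both approaches gloss over the same technical point (that one can pick an indecomposable summand of the middle term through which both non-zero maps pass), which you flag yourself; it is a standard fact. No objection to (ii).

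Part (i) has a genuine gap. You pick a source $k$ of the subquiver supporting $L$ and assert that the canonical morphism $P_k\to L$ is surjective. That fails whenever the support of $L$ has more than one source, which happens precisely when the top of $L$ is not simple --- a common situation for type $A$ quivers that are not equioriented on that interval. For a concrete failure take $\QQ: 1\to 2\leftarrow 3$ and $L=U_{1,3}$: the supporting subquiver has two sources $1$ and $3$, $\mathrm{top}(L)=S_1\oplus S_3$, and neither $P_1\to L$ nor $P_3\to L$ is surjective. With $k=1$ and $E=S_3$ one has $[L,E]=1$ but $\Hom(P_1,S_3)=(S_3)_1=0$, so the restriction map $\Hom(L,E)\to\Hom(P_k,E)$ is not injective and the intended bound evaporates. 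The same choice of $k=3$ fails against $E=S_1$. The true projective cover of $L$ is $P_1\oplus P_3$, which gives an embedding $\Hom(L,E)\hookrightarrow E_1\oplus E_3$ and only the bound $[L,E]\leq 2$. Since the Ext bound in your argument is obtained by applying this (broken) Hom bound to the pair $(E,\tau L)$, it inherits the gap. The paper avoids the issue by writing $L=\tau^{-s}P_i$ and computing $[L,E]=[P_i,\tau^s E]\leq 1$, where the bound now really does follow from thinness because $\Hom(P_i,-)$ evaluates the $i$-th component directly; you should replace your reduction-to-thinness step with this one, or with some other argument that does not presuppose a simple top for $L$.
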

\begin{proof}
Let $s\geq0$ and $i\in\QQ_0$ such that $L=\tau^{-s}P_i$.
For (i), we notice that $[L,E]=[P_i,\tau^{s}E]\leq 1$, since $\tau^s E$ is indecomposable and hence thin. The second inequality follows from the  Auslander formulas \eqref{Eq:ARFormula}.

To prove (ii) suppose that $[L,E][L,E]^1\neq 0$ and let $0\rightarrow E\rightarrow F\rightarrow L\rightarrow 0$ be a non-split short exact sequence. Since $[L,E]\neq0$,  we get a cyclic path $L\rightarrow E\rightarrow F_i\rightarrow L$ (where $F_i$ is an indecomposable direct summand of $F$) contradicting the fact that $\Rep(\QQ)$ is representation-directed.
\end{proof}
\begin{lem}\label{Lemma:ImageIndecomposable}
Let $L,E\in\ind(\QQ)$ and let $f:L\rightarrow E$ be a non-zero homomorphism. Then the image of $f$ is indecomposable.
\end{lem}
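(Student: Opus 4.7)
The plan is to argue by contradiction, leveraging Lemma~\ref{Lemma:OneDimensional}(i), which bounds $\dim\Hom(L,E)\leq 1$. Suppose the image decomposes in $\Rep(\QQ)$ as $\im(f) = A\oplus B$ with both summands non-zero. First, I would factor $f = \iota\circ\pi$ through $\pi:L\twoheadrightarrow\im(f)$ and $\iota:\im(f)\hookrightarrow E$, and then use the canonical projections $p_A,p_B$ of $\im(f)=A\oplus B$ onto its summands to build two morphisms
$$
g_A := \iota\circ p_A\circ\pi, \qquad g_B := \iota\circ p_B\circ\pi,
$$
both in $\Hom(L,E)$. By construction $g_A + g_B = f$, and each is non-zero because $\pi$ is surjective onto $\im(f)$, the projection $p_A$ (resp.\ $p_B$) is surjective onto the non-zero summand, and $\iota$ is injective.

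Next I would check that $g_A$ and $g_B$ are linearly independent in $\Hom(L,E)$. Their images in $E$ are precisely $\iota(A)$ and $\iota(B)$, which intersect only at $0$ since the decomposition $\im(f)=A\oplus B$ persists as a direct sum of subrepresentations of $E$. Hence any relation $c_A g_A + c_B g_B = 0$ forces $\im(c_A g_A)\subseteq \iota(A)\cap\iota(B)=0$, so $c_A g_A=0$; since $g_A\neq 0$ this gives $c_A=0$, and symmetrically $c_B=0$. Thus $\dim\Hom(L,E)\geq 2$, contradicting Lemma~\ref{Lemma:OneDimensional}(i). The contradiction shows that $\im(f)$ is indecomposable.

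I do not foresee a substantive obstacle. The only point to verify with care is that the projections $p_A,p_B$ are genuine morphisms in $\Rep(\QQ)$, which is automatic because $\im(f)=A\oplus B$ is a decomposition in that abelian category. All the representation-theoretic content---the thinness of indecomposable type-$A$ representations and the representation-directedness underlying the bound $[L,E]\leq 1$---is already packaged in the previous lemma, so the argument reduces to the elementary linear-algebra observation above.
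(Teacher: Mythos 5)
Your proof is correct. You argue by contradiction: a nontrivial decomposition $\im(f)=A\oplus B$ yields two morphisms $g_A=\iota\circ p_A\circ\pi$ and $g_B=\iota\circ p_B\circ\pi$ in $\Hom(L,E)$ whose images $\iota(A)$ and $\iota(B)$ meet only in $0$, so they are linearly independent, forcing $[L,E]\geq 2$ and contradicting Lemma~\ref{Lemma:OneDimensional}(i). The paper reaches the same conclusion by a slightly different (and more concise) chain of left-exactness arguments: applying $\Hom(L,-)$ to $0\to\im f\to E\to\coker f\to 0$ gives $[L,\im f]=[L,E]=1$, and then applying $\Hom(-,\im f)$ to $0\to\ker f\to L\to\im f\to 0$ gives $[\im f,\im f]\leq[L,\im f]=1$, hence $\End(\im f)=\CC$ and $\im f$ is indecomposable. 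Both routes ultimately hinge on the bound $[L,E]\leq 1$ from Lemma~\ref{Lemma:OneDimensional}, but the paper transfers that bound to $[\im f,\im f]$ directly, obtaining the stronger byproduct $[L,\im f]=[\im f,\im f]=1$, whereas your argument builds the contradiction in $\Hom(L,E)$ by explicitly exhibiting two independent morphisms. Your version is more concrete and equally valid; the paper's is slightly more economical and packages the information in a form that is reused elsewhere.
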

\begin{proof}
Apply $\Hom(L,-)$ to the short exact sequence 
$\xymatrix@1@C=10pt{0\ar[r]&\textrm{Im}(f)\ar[r]&E\ar[r]&\textrm{Coker}(f)\ar[r]&0}$ and get $[L,\textrm{Im}(f)]=[L,E]=1$. Then apply $\Hom(-,\im(f))$ to the short exact sequence $\xymatrix@1@C=10pt{0\ar[r]&\textrm{Ker}(f)\ar[r]&L\ar[r]&\im(f)\ar[r]&0}$ and get $[\im(f),\im(f)]=1$, proving that $\im(f)$ is indecomposable. 
\end{proof}
\begin{rem}
Lemma~\ref{Lemma:ImageIndecomposable} is not true for other Dynkin quivers.
\end{rem}

\begin{lem}\label{Lem:MiddleTerm}
Let $L,E\in\ind(\QQ)$ such that $[E,L]^1\neq 0$. Let $$\xymatrix{\xi:0\ar[r]& L\ar^\iota[r]& F\ar^p[r] &E\ar[r]& 0}$$ be a non-split short exact sequence. Then the following properties hold for $F$:
\begin{enumerate}
\item $F$ is rigid and it has at most two indecomposable direct summands.
\item $F$ is indecomposable if and only if $[L,E]=0$.
\item $F$ is decomposable if and only if $[L,E]=1$. In this case $[F,F]=2$.
\item $[F,L]^1=[E,F]^1=0$
\end{enumerate} 

\end{lem}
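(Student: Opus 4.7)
The proof splits naturally into three steps: part (iv), the vanishing $[L,E]^1=0$, and the counting of summands in (i)--(iii). I would begin by applying $\Hom(-,L)$ and $\Hom(E,-)$ to $\xi$. From $[E,L]^1\neq 0$, Lemma~\ref{Lemma:OneDimensional}(i)--(ii) force $[E,L]=0$ and $[E,L]^1=1$. In the long exact sequence from $\Hom(-,L)$ the connecting map $\Hom(L,L)\to\Ext^1(E,L)$ sends $\mathrm{id}_L$ to $[\xi]\neq 0$ between two one-dimensional spaces, so it is an isomorphism; the sequence collapses to $[F,L]=[F,L]^1=0$. The same argument with $\Hom(E,-)$, now using the connecting map $\Hom(E,E)\to\Ext^1(E,L)$, gives $[E,F]=[E,F]^1=0$, which is (iv).

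The crucial and delicate step is $[L,E]^1=0$; I plan to argue by contradiction using that $\Rep(Q)$ is representation-directed. If $[L,E]^1\neq 0$, the Auslander--Reiten formulas \eqref{Eq:ARFormula} yield $[L,\tau E]\neq 0$ and $[E,\tau L]\neq 0$, and in particular both $L$ and $E$ are non-projective. For any non-projective indecomposable $X$ the Auslander--Reiten sequence $0\to\tau X\to M\to X\to 0$ is non-split, so every indecomposable summand $M_i$ of $M$ is isomorphic to neither $\tau X$ nor $X$; the restrictions of the AR inclusion and projection therefore give a path of non-zero non-isomorphisms $\tau X\to M_i\to X$ in $\ind(Q)$, whence $\tau X\prec X$ in the representation-directed order $\preceq$. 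Applying this to both $L$ and $E$, and combining with $L\preceq\tau E$ and $E\preceq\tau L$ coming from the nonzero maps above, yields the cycle
\[
L\preceq\tau E\prec E\preceq\tau L\prec L,
\]
contradicting representation-directedness. Hence $[L,E]^1=0$. This is the main obstacle; everything else will be bookkeeping with long exact sequences.

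Knowing $[F,L]^1=0$ and $[L,E]^1=0$, I then apply $\Hom(F,-)$ and $\Hom(-,E)$ to $\xi$: using $[E,E]=1$ and $[E,E]^1=0$, the two long exact sequences collapse to $[F,F]=[F,E]=1+[L,E]$ and $[F,F]^1=[L,E]^1=0$, giving rigidity of $F$ and the value of $[F,F]$ asserted in (iii). For the summand count, write $F=\bigoplus_{i=1}^r F_i$ with $F_i$ indecomposable; then $[F,F]=\sum_{i,j}[F_i,F_j]\geq\sum_i[F_i,F_i]\geq r$ since each $[F_i,F_i]\geq 1$. If $[L,E]=0$ then $[F,F]=1$ forces $r=1$ and $F$ is indecomposable, proving (ii); if $[L,E]=1$ then $[F,F]=2$ forces $r\leq 2$, and since $r=1$ would give $[F,F]=1$, one has $r=2$, proving (iii). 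Together these establish (i).
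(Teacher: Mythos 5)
Your proof is correct but takes a genuinely different route from the paper's. The paper applies $\Hom(L,-)$ to $\xi$ to obtain $[L,F]=1+[L,E]\leq 2$, then observes that non-splitness of $\xi$ forces $[L,F_i]\neq 0$ for every indecomposable summand $F_i$ of $F$, bounding the number of summands by $[L,F]$; rigidity is then verified in each case separately (automatic when $F$ is indecomposable, and by a further $\Ext$-computation in the decomposable case, yielding along the way $F_1\not\simeq F_2$ and $[F_1,F_2]=0$). You instead establish part (iv) first via the connecting maps, prove $[L,E]^1=0$ as a standalone consequence of representation-directedness via a cyclic-path argument, compute $[F,F]=1+[L,E]$ and $[F,F]^1=0$ uniformly by applying $\Hom(F,-)$ and $\Hom(-,E)$ to $\xi$, and count summands via the estimate $[F,F]\geq\sum_i[F_i,F_i]\geq r$. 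Your route handles rigidity without a case split, which is cleaner; the paper's route gives finer structural facts not needed for the statement. Two small remarks. First, a slip of attribution: you write that $[L,E]^1\neq 0$ yields $[L,\tau E]\neq 0$ via \eqref{Eq:ARFormula}, but that inequality already follows from the given $[E,L]^1\neq 0$; the contradiction hypothesis $[L,E]^1\neq 0$ supplies $[E,\tau L]\neq 0$. Since both do hold under your assumption, the cycle $L\preceq\tau E\prec E\preceq\tau L\prec L$ goes through unchanged. Second, the standalone proof of $[L,E]^1=0$ (while correct) could be dispensed with: once $[F,F]=1+[L,E]$ is known, Lemma~\ref{Lemma:OneDimensional}(ii) disposes of the case $[L,E]=1$, and $[L,E]=0$ gives $[F,F]=1$, forcing $F$ indecomposable and hence rigid.
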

\begin{proof}
Since $[L,L]^1=0$, by applying the covariant functor $\Hom(L,-)$ to $\xi$ we  get the short exact sequence
\begin{equation}\label{Eq:LemmaSequence}
0\rightarrow \Hom(L,L)\rightarrow \Hom(L,F)\rightarrow \Hom(L,E)\rightarrow 0
\end{equation}
and hence $[L,F]=[L,L]+[L,E]=1+[L,E]\leq 2$ (by Lemma~\ref{Lemma:OneDimensional}). Since $\xi$ is not split, and $L$ and $E$ are indecomposables, we see that necessarily $[L,F_i]\neq0$ and $[F_i,E]\neq0$ for every indecomposable direct summand $F_i$ of $F$. Thus $F$ has at most two indecomposable direct summands. Moreover, $F$ is indecomposable if and only if $[L,F]=1$ if and only if $[L,E]=0$. If $F$ is indecomposable, it is rigid.

If $F$ is not indecomposable, then $[L,F]=2$, $[L,E]=1$ and $F$ is the direct sum of two indecomposables, say $F_1$ and $F_2$.  
Let us show that $F_1\not\simeq F_2$, $[F_1,F_2]=[F_2,F_1]=0$ and $[F,F]^1=0$. Since $[L,F_1]=1$, then $[F_1,L]=0$ because $F_1\not\simeq L$ and $\Rep(\QQ)$ is representation-directed. Thus, when we apply $\Hom(F_1,-)$ to $\xi$ we get $[F_1,F]\leq [F_1,E]=1$ and hence $[F_1,F]=[F_1,F_1]=1$ and $[F_1,F_2]=0$. Moreover,  $[F_1,F]^1=[F_1,L]^1+[F_1,E]^1$. Apply $\Hom(-,L)$ to $\xi$ and get the short exact sequence 
$$
0\rightarrow \Hom(L,L)\rightarrow \Ext^1(E,L)\rightarrow \Ext^1(F,L)\rightarrow 0.
$$
Since $[L,L]=[E,L]^1=1$, we get $[F,L]^1=0$. Since $[F_1,E]=[F_2,E]=1$ then $[F,E]^1=0$ by Lemma~\ref{Lemma:OneDimensional}. We conclude that $F$ is rigid. To get $[E,F]^1=0$ apply $\Hom(E,-)$ to $\xi$ and conclude as above. 
\end{proof}
\noindent
\subsection{Auslander-Reiten Theory} In this subsection we recall the definition of the Auslander-Reiten quiver of $\QQ$ and how to use it to get a basis of the Hom and Ext-space between two indecomposable $\QQ$-representations.

Let $L$ be an indecomposable non-projective $\QQ$--representation. Since $[L,\tau L]^1=[\tau L,\tau L]=[L,L]=1$, there exists a non-split short exact sequence 
\begin{equation}\label{Eq:ARSequenceEndingL}
\xymatrix{
\xi:0\ar[r]&\tau L \ar^f[r]& X\ar^g[r]&L\ar[r]&0.}
\end{equation}
This sequence has the remarkable property that it is \emph{almost split}, or an \emph{AR-sequence}; this means that 1) it is not split, 2) for every $E\in\ind(\QQ)$, $E\not\simeq L$,  the induced map $g_\ast:\Hom(E,X)\rightarrow \Hom(E,L)$ is surjective and 3) for every $E'\in\ind(\QQ)$, $E'\not\simeq \tau L$, the induced map $f^\ast:\Hom(X,E')\rightarrow \Hom(\tau L, E')$ is surjective. These \emph{almost split properties} follow at once from Lemma~\ref{Lemma:OneDimensional}; indeed,  by the AR-formulas \eqref{Eq:ARFormula}, $[E,\tau L]^1=[\tau L,\tau E]$ and $[L,E']^1=[E',\tau L]$.
Dually, for every non-injective $L\in\ind(\QQ)$ there exists an almost split sequence starting from $L$: 
\begin{equation}\label{Eq:ARSequenceStartingL}
\xymatrix{
\xi:0\ar[r]&L \ar^f[r]& X\ar^g[r]&\tau^-L\ar[r]&0.}
\end{equation}
The \emph{Auslander-Reiten quiver} or \emph{AR-quiver} of Rep($\QQ$)  is a translation quiver \cite{Ringel} denoted by $\Gamma_\QQ$ and defined as follows: It has $\ind(\QQ)$ as set of vertices; thus a vertex of $\Gamma_\QQ$ denotes an isoclass of an indecomposable $\QQ$-representation; with abuse of notation, we denote  a vertex of $\Gamma_\QQ$ and an element of the corresponding isoclass with the same symbol. The arrows of $\Gamma_\QQ$ are defined as follows: given $X_i, L\in\ind(\QQ)$ there is an arrow $X_i\rightarrow L$ in $\Gamma_\QQ$ if $X_i$ is a direct summand of the middle term of the almost sequence ending in $L$ in case $L$ is non-projective, or, if $L$ is projective, $X_i$ is a direct summand of the radical of $L$. Thus, an arrow $X_i\rightarrow L$ corresponds to a homomorphism from $X_i$ to $L$ which by Lemma~\ref{Lemma:OneDimensional} is unique, up to non-zero-scalar multiples. Finally, the translation is given by the functor $\tau$; it sends a non-projective vertex $L$ to the non-injective vertex $\tau L$ and an arrow $f:E\rightarrow L$ between non-projectives  to the arrow $\tau f:\tau E\rightarrow \tau L$ between non-injectives. 

We always use the standard convention to depict $\Gamma_\QQ$ so that the translation is represented by a horizontal dashed arrow which points from right to left, and the arrows of $\Gamma_\QQ$ either point towards the north-east or the south-east.

There are several methods to compute $\Gamma_\QQ$ (see e.g. \cite[Section~3.1]{Schiffler}). One is the so-called \emph{knitting algorithm} which works as follows: the subquiver of $\Gamma_\QQ$ supported on the indecomposable projectives is a copy of $\QQ^{op}$, with $P_i$ put at vertex $i$; starting from this one inductively \emph{knits} by using the formula $\mathbf{dim}\, \tau^-L=\mathbf{dim}\,X-\mathbf{dim}\,L$ to get the vertex $\tau^- L$ knowing $X$ and $L$; the algorithm stops when $\mathbf{dim}\,X-\mathbf{dim}\,L$ is not a dimension vector. We choose to draw $P_1$ at the top in order to fix a unique way to depict $\Gamma_\QQ$. Let us sketch three examples.

\begin{example}\label{Ex:ARQuiverA3} Let $\xymatrix@1{\QQ=\stackrel{\rightarrow}{A_3}=1\ar[r]&2\ar[r]&3}$. Then $\Gamma_\QQ$ is
$$
\
\xymatrix@!R=8pt@!C=8pt{
 &                         &P_1=I_3\ar[dr]& \\
                     &P_2\ar[ur]\ar[dr]&                       &I_2\ar[dr]\ar@{-->}_\tau[ll]&\\
P_3=S_3\ar[ur]&                         &S_2\ar[ur]\ar@{-->}_\tau[ll]&     &I_1=S_1\ar@{-->}_\tau[ll]
}
$$
\end{example}
\begin{example}\label{Ex:ARQuiverAEvenAltern}
Let $\QQ$ be $\vcenter{\xymatrix@R=5pt@C=5pt{1\ar[dr]&&3\ar[dr]\ar[dl]&\\&2&&4\\}}$. Its AR-quiver $\Gamma_\QQ$ is given by
$$
\xymatrix@!R=10pt@!C=10pt{
&P_1=U_{1,2}\ar[dr]&&U_{3,4}=I_4\ar@{-->}_(.45)\tau[ll]\ar[dr]&\\
P_2=U_{2,2}\ar[dr]\ar[ur]&&U_{1,4}\ar@{-->}_(.45)\tau[ll]\ar[dr]\ar[ur]&&U_{3,3}=I_3\ar@{-->}_\tau[ll]\\
&P_3=U_{2,4}\ar[dr]\ar[ur]&&U_{1,3}=I_2\ar@{-->}_(.45)\tau[ll]\ar[dr]\ar[ur]&\\
P_4=U_{4,4}\ar[ur]&&U_{2,3}\ar@{-->}_(.45)\tau[ll]\ar[ur]&&U_{1,1}=I_1\ar@{-->}_\tau[ll]
}
$$
\end{example}

\begin{example}\label{Ex:ARQuiverAOddAltern}
Let $\QQ$ be $\vcenter{\xymatrix@R=5pt@C=5pt{&2\ar[dl]\ar[dr]&&&\\1&&3\ar[dr]&&5\ar[dl]\\&&&4&}}$. Its AR-quiver $\Gamma_\QQ$ is given by
$$\xymatrix@!R=6pt@!C=6pt{
&P_1=U_{1,1}\ar[dr]&&U_{2,4}\ar[dr]\ar@{-->}_(.4)\tau[ll]&&U_{5,5}=I_5\ar@{-->}_\tau[ll]&\\
&&P_2=U_{1,4}\ar[dr]\ar[ur]&&U_{2,5}=I_4\ar[dr]\ar[ur]\ar@{-->}_(.45)\tau[ll]&&\\
&P_3=U_{3,4}\ar[dr]\ar[ur]&&U_{1,5}\ar[dr]\ar[ur]\ar@{-->}_(.4)\tau[ll]&&U_{2,3}=I_3\ar[dr]\ar@{-->}_\tau[ll]&\\
P_4=U_{4,4}\ar[dr]\ar[ur]&&U_{3,5}\ar[dr]\ar[ur]\ar@{-->}_(.4)\tau[ll]&&U_{1,3}\ar[dr]\ar[ur]\ar@{-->}_\tau[ll]&&U_{2,2}=I_2\ar@{-->}_(.55)\tau[ll]\\
&P_5=U_{4,5}\ar[ur]&&U_{3,3}\ar[ur]\ar@{-->}_(.4)\tau[ll]&&U_{1,2}=I_1\ar[ur]\ar@{-->}_(.55)\tau[ll]&\\
}
$$
\end{example}
\subsubsection*{Paths and homomorphisms} 
A \emph{path} of \emph{length} $k$ in $\Gamma_\QQ$ is a directed path 
\begin{equation}\label{Eq:PathInGamma}
\xymatrix{
p:L_1\ar^{f_1}[r]&L_2\ar^{f_2}[r]&\cdots\ar^{f_k}[r]&L_{k+1}
}
\end{equation}
of the quiver $\Gamma_\QQ$. For $k=0$ this is the trivial path from $L_1$ to itself. We identify this path with the path of $\Rep(\QQ)$ obtained by choosing a representative of each arrow $f_i$. If $L_1$ is non-projective, then the translation $\tau$ induces  the \emph{translated path} $\tau p:\xymatrix@C=40pt{
\tau L_1\ar|{(\tau f_1)}[r]&\tau L_2\ar|{(\tau f_2)}[r]&\cdots\ar|(.4){(\tau f_k)}[r]&\tau L_{k+1}.
}
$

We say that a path $p$ in $\Gamma_\QQ$ as \eqref{Eq:PathInGamma} \emph{factors through an almost split sequence} if $k\geq 2$ and there exists an index $1\leq i\leq k-1$ such that $L_i=\tau L_{i+2}$.

A \emph{zero-relation}  is a path in $\Gamma_\QQ$ that factors through an almost split sequence whose middle term is indecomposable. By Lemma~\ref{Lem:MiddleTerm}, a zero-relation is zero.

\begin{lem}\label{Lem:ZeroRelations}
Let $E,L\in\ind(\QQ)$ and let $p$ be a path from $E$ to $L$  in $\Gamma_\QQ$ which factors through an almost split sequence. Then $L$ is not projective and there exists a path from $E$ to  $\tau L$. In particular,  there exists a path from $E$ to every direct  summand of the middle term of the almost split sequence ending in $L$.
\end{lem}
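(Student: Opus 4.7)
\emph{Plan.} Write the path as $E=L_1\to L_2\to\cdots\to L_{k+1}=L$ with $L_i=\tau L_{i+2}$ for some $1\le i\le k-1$. The plan is in three steps: first establish that $L$ is non-projective by propagating non-projectivity forward from $L_{i+2}$ along $p$; then construct a path from $E$ to $\tau L$ by $\tau$-translating the suffix of $p$ and splicing it onto the prefix; finally use the mesh at $L$ to extend this path to any summand of the middle term.

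For the first step I will use a feature of type $A$ already recalled in the paper: the radical of every indecomposable projective is a direct sum of indecomposable projectives. Consequently any arrow $M\to N$ of $\Gamma_\QQ$ whose target $N$ is projective forces $M$ to be projective as well. Since $\tau L_{i+2}=L_i$ exists, $L_{i+2}$ is non-projective; assuming some $L_j$ with $j\ge i+2$ were projective and taking the smallest such index would produce an arrow $L_{j-1}\to L_j$ into a projective from a non-projective source, contradicting either the minimality of $j$ or the non-projectivity of $L_{i+2}$. Hence every $L_j$ with $j\ge i+2$, and in particular $L=L_{k+1}$, is non-projective.

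With this in hand, $\tau L_j$ is defined for every $j\ge i+2$. The second step relies on the translation-quiver axiom applied twice: for any non-projective vertex $Y$ of $\Gamma_\QQ$, the mesh ending at $Y$ gives a bijection between arrows $X\to Y$ and arrows $\tau Y\to X$. Given an arrow $L_j\to L_{j+1}$ in the suffix, both of whose endpoints are non-projective by Step 1, applying the axiom at $Y=L_{j+1}$ produces an arrow $\tau L_{j+1}\to L_j$, and then applying it at $Y=L_j$ produces an arrow $\tau L_j\to\tau L_{j+1}$. These translated arrows concatenate into a path $\tau L_{i+2}=L_i\to\tau L_{i+3}\to\cdots\to\tau L_{k+1}=\tau L$ in $\Gamma_\QQ$, and splicing it onto the prefix $L_1\to\cdots\to L_i$ of $p$ yields the desired path from $E$ to $\tau L$.

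The final assertion is then immediate: because $L$ is non-projective, the almost split sequence $0\to\tau L\to X\to L\to 0$ exists, and every indecomposable direct summand $M$ of $X$ comes with an arrow $\tau L\to M$ in $\Gamma_\QQ$, which I append to the path produced in Step 2. The main obstacle is the double mesh translation in Step 2: it requires both endpoints of every arrow in the suffix to be non-projective so that the translation-quiver axiom can be invoked twice, and this is exactly what Step 1 guarantees. This is why the steps must be carried out in this order, and it is Step 1 rather than Step 2 that exploits the specific type-$A$ feature that $\rad P_j$ is a sum of projectives.
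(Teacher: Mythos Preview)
Your proof is correct and follows essentially the same approach as the paper's. The paper's proof is terser: it invokes the notion of the \emph{translated path} $\tau p$ (defined just before the lemma) directly on the suffix $L_{i+2}\to\cdots\to L_{k+1}$, leaving implicit the verification that all vertices of this suffix are non-projective, whereas you spell out this propagation argument explicitly in your Step~1 and then carry out the translation arrow-by-arrow via the mesh axiom in Step~2. One small remark: the fact that $\rad P_j$ is projective (hence arrows into projectives come from projectives) is not specific to type~$A$ but holds for any hereditary algebra; it is, however, the only ingredient needed, and the paper does state it explicitly for type~$A$ just before introducing the AR-quiver.
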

\begin{proof}
To fix notation, we assume that $p$ is given by  \eqref{Eq:PathInGamma} with $E=L_1$ and $L=L_{k+1}$ and let $i$ be an index such that $L_i=\tau L_{i+2}$. Then $L_{i+2}$ is not projective and there is an induced path from $L_{i+2}$ to $L$. Thus there is the translated path from $L_i=\tau L_{i+2}$ to $\tau L$ and hence from $E$ to $\tau L$. The statement is now obvious.
\end{proof}
\noindent
We say that two vertices $E$ and $L$ of $\Gamma_\QQ$ have \emph{distance} $\textrm{dist}(E,L):=d$ if there exists a path in $\Gamma_\QQ$ from $E$ to $L$ of length $d$. Since every path from $\tau L$ to $L$ has length two, we see that this notion is well-defined.

\begin{prop}\label{Prop:PathBasisHom}
Let $E, L\in\ind(\QQ)$. Then $[E,L]=0$ if and only if either there are no paths in $\Gamma_\QQ$ from $E$ to $L$ or there exists a zero-relation from $E$ to $L$. If $[E,L]\neq0$ then every path from $E$ to $L$ is a basis of $\Hom(E,L)$. 
\end{prop}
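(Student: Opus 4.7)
My plan is to proceed by induction on the depth of $L$ in $\Gamma_\QQ$ --- the length of the longest directed path from a projective vertex to $L$, well-defined since $\Rep(\QQ)$ is representation-directed and depth strictly increases along arrows of $\Gamma_\QQ$. The two key ingredients are the almost-split property combined with Lemma~\ref{Lemma:OneDimensional} (giving $[E,L]\leq 1$), and the following mesh identities. For each non-projective $L$ with almost-split sequence
\begin{equation*}
0\to \tau L\xrightarrow{(f_i)}\bigoplus_{i} X_i\xrightarrow{(g_i)} L\to 0
\end{equation*}
(at most two summands, by Lemma~\ref{Lem:MiddleTerm}), exactness yields $\sum_i g_i f_i=0$. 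If there is a single summand, this is precisely the zero-relation $g f=0$. If there are two summands, one first observes that $\Hom(\tau L,L)$ is one-dimensional (by applying $\Hom(-,L)$ to the AR-sequence and using $[X_i,L]=1$ for each summand), so $g_1 f_1$ and $g_2 f_2$ are scalar multiples of each other, and both are nonzero: if $g_1 f_1=0$, the relation forces $g_2 f_2=0$ as well, which combined with the non-splitting of the AR-sequence and indecomposability of $L$ yields a contradiction.

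\emph{Forward direction.} Assume $[E,L]\neq 0$. I induct on $\mathrm{depth}(L)$ to produce a path $E\to\cdots\to L$ in $\Gamma_\QQ$ with nonzero composition. For $E\simeq L$ the trivial path works. Otherwise, pick a nonzero non-isomorphism $f\colon E\to L$. If $L$ is projective, $f$ factors through the inclusion $\rad(L)=\bigoplus_j X_j\hookrightarrow L$ along the $\Gamma_\QQ$-arrows $X_j\to L$; some component $f_j\colon E\to X_j$ is nonzero, and since $\mathrm{depth}(X_j)<\mathrm{depth}(L)$, the induction hypothesis provides a path $E\to\cdots\to X_j$ whose composition is a nonzero scalar multiple of $f_j$. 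Appending the arrow $X_j\to L$ yields the required path, whose composition is a nonzero scalar multiple of $f$. If $L$ is non-projective, the almost-split property lifts $f$ through the middle term of its AR-sequence to some $\tilde f=(f_j)\colon E\to\bigoplus X_j$ with $g\tilde f=f$; at least one $f_j$ is nonzero and the same induction concludes.

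\emph{Reverse direction and main obstacle.} Still assuming $[E,L]\neq 0$, I must show that \emph{every} path from $E$ to $L$ in $\Gamma_\QQ$ composes to a basis of $\Hom(E,L)$ (by $[E,L]\leq 1$, any nonzero composition is a basis); dually, if there are no paths or every path contains a zero-relation as a length-two subpath, then $[E,L]=0$. The two-summand mesh identity implies that two paths differing by a single mesh-move at a two-summand vertex compose to proportional nonzero maps, so the set of nonzero-composition paths between $E$ and $L$ is closed under such moves. The main obstacle is to rule out zero-relations inside any path when $[E,L]\neq 0$: I would argue by contradiction, using Lemma~\ref{Lem:ZeroRelations} to shift a path containing a zero-relation into one from $E$ to $\tau L$, iterating until the iterated AR-translate becomes projective (possible since $\Gamma_\QQ$ is finite and directed), and then deriving a contradiction with the nonzero path supplied by the forward direction. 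Conversely, if every path from $E$ to $L$ contains a zero-relation, every composition vanishes, hence $[E,L]=0$; this direction is formal given the mesh identity and the statement that a zero-relation composes to zero (which follows from Lemma~\ref{Lem:MiddleTerm}).
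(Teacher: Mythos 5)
Your forward direction (producing, from $[E,L]\neq0$, a path in $\Gamma_\QQ$ with nonzero composition by lifting a nonzero non-isomorphism through $\rad(L)$ or through the middle term of the AR-sequence and inducting) is essentially the same as the first half of the paper's argument, modulo a small slip in the non-projective case: you should pick the index $j$ with $g_jf_j\neq 0$ (which exists since $\sum_j g_jf_j=f\neq0$), not merely some $j$ with $f_j\neq0$, because the irreducible morphism $g_j$ need not be injective and $g_jf_j$ can vanish even when $f_j\neq0$. That chosen $j$ automatically has $f_j\neq0$, so the rest of your inductive step then goes through.

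The reverse direction is where the real content of the proposition lies, and your sketch does not supply it. You propose to rule out zero-relations when $[E,L]\neq0$ by taking a zero-relation path from $E$ to $L$, applying Lemma~\ref{Lem:ZeroRelations} to get a path from $E$ to $\tau L$, iterating until the translate becomes projective, and "deriving a contradiction with the nonzero path supplied by the forward direction." But Lemma~\ref{Lem:ZeroRelations} produces only \emph{some} path from $E$ to $\tau L$; it does not assert that this path again contains a zero-relation, so there is no inductive datum carried along, and the eventual contradiction is never identified (a zero-composition path and a nonzero-composition path can perfectly well coexist between the same two vertices in a quiver). The ingredient your proposal is missing is the dimension identity $[E,L]=[E,X]-[E,\tau L]$ arising from the almost-split sequence $0\to\tau L\to X\to L\to 0$ (valid once $E\not\simeq L$): this is what lets the paper run an induction on $\max_i\operatorname{dist}(P_i,L)$ and, via a case analysis on the value of $[E,\tau L]$ and on whether $X$ is indecomposable, deduce both that a zero-relation forces $[E,L]=0$ and that $[E,L]=0$ with paths present forces a zero-relation. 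Your morphism-level mesh identity $\sum_i g_if_i=0$ relates path compositions but carries no dimension count, and your mesh-move observation leaves open whether a zero-relation path can be connected by mesh moves to a nonzero-composition one (at the one-summand mesh there is no alternative leg to move through). Finally, your last sentence proves "every path from $E$ to $L$ contains a zero-relation $\Rightarrow[E,L]=0$", which is strictly weaker than what is asserted ("some zero-relation exists $\Rightarrow[E,L]=0$"), and the implication "$[E,L]=0$ with paths present $\Rightarrow$ a zero-relation exists" is not addressed at all.
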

\begin{proof}
We fix $E\in\ind(\QQ)$ and let $L$ vary in $\ind(\QQ)$. If $L\simeq E$ then $[E,L]=1$ and there is a unique (trivial) path from $E$ to $L$. Let us assume that $L\not\simeq E$. For $L\in\ind(\QQ)$ let $d_L=\textrm{max}\{\textrm{dist}(P_i,L)|\, i\in\QQ_0\}$ be the maximal distance from an indecomposable projective to $L$. We proceed by induction on $d_L\geq0$. For simplicity, in this proof, by a path we mean a path in $\Gamma_\QQ$.

Suppose, first, that $L=P_i$ is projective (this case includes the base of  induction).  Then the only paths ending in $L$ start from a projective. Thus if $E$ is not projective, $[E,L]=0$ and there are no paths from $E$ to $L$. If $E=P_j$ is projective, then $[E,L]=1$ if and only if the vertices $i$ and $j$ of $\QQ$ are joined by a directed path $\xymatrix@1{i\ar@{..>}[r]&j}$ of $\QQ$. In this case, there is a unique path from $P_j$ to $P_i$ and by Lemma~\ref{Lem:ZeroRelations} it is not a zero-relation. This proves the lemma in the case when $L$ is projective. 

Let us assume that $L\in\ind(\QQ)$ is not projective, in particular $d_L\geq 1$.  Let us consider the almost split sequence $\xi$ \eqref{Eq:ARSequenceEndingL} ending in $L$. In view of Lemma~\ref{Lem:MiddleTerm} either its middle term $X$ is indecomposable or it has two non-isomorphic direct summands $X=X_1\oplus X_2$. For simplicity, by a path from $E$ to $X$ we mean a path from $E$ to an indecomposable direct summand of $X$. We notice that $d_{\tau L}<d_L$ and $d_{X_{i}}<d_L$. Since $E\not\simeq L$, the almost split properties give
\begin{equation}\label{Eq:AlmostSplitProofLem}
[E,L]=[E,X]-[E,\tau L].
\end{equation}  
Moreover, again because $E\not\simeq L$, every path from $E$ to $L$ factors through $X$. Clearly,  every path from $E$ to $X$ gives rise to a path from $E$ to $L$. Thus, there are no paths from $E$ to $L$ if and only if there are no paths from $E$ to $X$. In this case, by induction,  $[E,X]=0$ which by \eqref{Eq:AlmostSplitProofLem} implies $[E,L]=0$. 

We assume from now on that there exists a path from $E$ to $L$. 

Let  $p$ be a zero-relation from $E$ to $L$ and let us show that $[E,L]=0$. Let $X_1$ be the indecomposable direct summand of $X$ such that $p$ factors through $X_1$. If $X=X_1$ then $[E,X]\leq 1$, if not then $[E,X_1]=0$ by induction, since $\xi$ is not a zero-relation. In all cases we have  that $[E,\tau L]\leq [E,X]\leq 1$. By Lemma~\ref{Lem:ZeroRelations} there exists a path from $E$ to $\tau L$. If $[E,\tau L]=0$ then by induction there exists a zero-relation from $E$ to $\tau L$; thus there exists a zero-relation from $E$ to every direct summand of $X$, and by induction $[E,X]=0$. By \eqref{Eq:AlmostSplitProofLem}, $[E,L]=0$. If $[E,\tau L]=1$, then $[E,X]=[E,\tau L]=1$ and by \eqref{Eq:AlmostSplitProofLem}, $[E,L]=0$. 

If $[E,L]=0$ then by \eqref{Eq:AlmostSplitProofLem}, $[E,X]=[E,\tau L]\in\{0,1\}$. If $[E,X]=[E,\tau L]=0$ then by induction there is a zero-relation from $E$ to $X$, and hence there is a zero-relation from $E$ to $L$, too. If $[E,X]=[E,\tau L]=1$ then, by induction, there exists a path $p$ from $E$ to $\tau L$. Moreover, either $X$ is indecomposable or $[E,X_1]=1$ and $[E,X_2]=0$. In the first case, $\xi$ is a zero-relation and by composing with $p$,  there exists a zero-relation from $E$ to $L$. In the second case, by composing with $p$, we see that there exists a path from $E$ to $X_2$, and since $[E,X_2]=0$ there exists a zero-relation from $E$ to $X_2$ and hence there exists a zero-relation from $E$ to $L$, too.

If $[E,L]=1$ then by \eqref{Eq:AlmostSplitProofLem} either $[E,X]=2$ and $[E,\tau L]=1$ or $[E,X]=1$ and $[E,\tau L]=0$. In the first case every path from $E$ to $X$ is not a zero-relation and hence the same holds for every path from $E$ to $L$, since $\xi$ is not a zero-relation. In the second case,  either $X=X_1$ is indecomposable or $[E,X_1]=1$ and $[E,X_2]=0$. Then there are no paths from $E$ to $\tau L$;  indeed, a zero-relation from $E$ to $\tau L$ induces  a zero-relation from $E$ to $X_1$, contradicting $[E,X_1]=1$; in particular, by Lemma~\ref{Lem:ZeroRelations}, there are no paths from $E$ to $X_2$. Thus, every path from $E$ to $L$ factors through $X_1$, does not factor through $\tau L$ and therefore it is not a zero-relation. 
\end{proof}

\begin{lem}\label{Lem:Convexity}
Let $L_1, L_2, L_3\in\ind(\QQ)$ such that $[L_1,L_2]=[L_2,L_3]=[L_1,L_3]=1$. Then the composition map 
$$
\Hom(L_1,L_2)\times\Hom(L_2,L_3)\rightarrow \Hom(L_1,L_3):\;(f,g)\mapsto g\circ f
$$
is surjective. 
\end{lem}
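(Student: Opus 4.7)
The plan is to reduce everything to Proposition~\ref{Prop:PathBasisHom}, which translates composability of morphisms between indecomposables into concatenability of paths in the Auslander--Reiten quiver $\Gamma_\QQ$.

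First I observe that, by Lemma~\ref{Lemma:OneDimensional}, each of the three Hom spaces involved is one-dimensional. Since the target $\Hom(L_1,L_3)$ is one-dimensional, the composition map is surjective if and only if it is non-zero, i.e.\ if and only if for some (equivalently, every) pair of non-zero morphisms $f\colon L_1\to L_2$ and $g\colon L_2\to L_3$ the composition $g\circ f$ is non-zero. So the whole lemma reduces to proving $g\circ f\neq 0$.

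Now I would invoke Proposition~\ref{Prop:PathBasisHom}. Because $[L_1,L_2]=1$ and $[L_2,L_3]=1$, this proposition produces paths $p\colon L_1\to L_2$ and $q\colon L_2\to L_3$ in $\Gamma_\QQ$, neither of which is a zero-relation, such that the composition of the chosen irreducible morphisms along $p$ (resp.\ along $q$) is a basis vector of $\Hom(L_1,L_2)$ (resp.\ of $\Hom(L_2,L_3)$). Up to non-zero scalars I may therefore take $f$ and $g$ to be exactly the morphisms realised by $p$ and $q$, so that $g\circ f$ is the morphism realised by the concatenated path $q\cdot p\colon L_1\to L_3$ in $\Gamma_\QQ$.

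The remaining point—and the only one where anything could go wrong—is to check that this concatenated path $q\cdot p$ is itself not a zero-relation. But this is immediate from Proposition~\ref{Prop:PathBasisHom} applied in the other direction: since $[L_1,L_3]=1\neq 0$, that proposition asserts that there is \emph{no} zero-relation from $L_1$ to $L_3$ in $\Gamma_\QQ$. In particular $q\cdot p$ is not a zero-relation, so the proposition also tells us that the morphism it induces is a basis vector of $\Hom(L_1,L_3)$, hence non-zero. This gives $g\circ f\neq 0$, establishing surjectivity. The hypothesis $[L_1,L_3]=1$ is used exactly here, to rule out the potentially dangerous case in which a zero-relation is created at the junction point $L_2$ by some coincidence of the form $p_{m-1}=\tau q_1$.
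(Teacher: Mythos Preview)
Your proof is correct and is essentially the same argument as the paper's: both pick paths realising bases of $\Hom(L_1,L_2)$ and $\Hom(L_2,L_3)$ via Proposition~\ref{Prop:PathBasisHom}, concatenate them, and then invoke the second clause of that proposition (since $[L_1,L_3]=1$) to conclude that the concatenated path is a basis of $\Hom(L_1,L_3)$. Your intermediate detour through the contrapositive (``no zero-relation from $L_1$ to $L_3$ exists'') is not needed, since the proposition already states directly that when $[L_1,L_3]\neq 0$ \emph{every} path from $L_1$ to $L_3$ is a basis.
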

\begin{proof}
Let $p_{21}$ be a path from $L_1$ to $L_2$ and let $p_{32}$ be a path from $L_2$ to $L_3$ in $\Gamma_\QQ$. By Proposition~\ref{Prop:PathBasisHom}, $p_{21}$ is a basis of  $\Hom(L_1,L_2)$ and $p_{32}$ is a basis of  $\Hom(L_2,L_3)$. The composite map $p=p_{32}\circ p_{21}$ is a path from $L_1$ to $L_3$ and again by Proposition~\ref{Prop:PathBasisHom} is a basis of $\Hom(L_1,L_3)$.
\end{proof}
\begin{cor}\label{Cor:Convexity}
Let $L, E\in\ind(\QQ)$, such that $[L,E]=1$. Let $s\geq 1$ be an integer such that $[L,\tau^{-s}E]=1$. Then $[E,\tau^{-s}E]=1$ and the composition map 
$$
\Hom(L,E)\times\Hom(E,\tau^{-s}E)\rightarrow \Hom(L,\tau^{-s}E):\;(f,g)\mapsto g\circ f
$$
is surjective. 
\end{cor}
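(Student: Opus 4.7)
The plan is to reduce the corollary to Lemma~\ref{Lem:Convexity} applied with $L_1=L$, $L_2=E$, $L_3=\tau^{-s}E$: that lemma delivers the surjectivity of the composition map as soon as the three pairwise Hom-spaces are $1$-dimensional, and since $[L,E]=1$ and $[L,\tau^{-s}E]=1$ are given, the entire statement collapses to the single claim $[E,\tau^{-s}E]=1$.

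For the base case $s=1$ I would apply $\Hom(L,-)$ to the almost split sequence $0\to E\to X\to\tau^{-1}E\to 0$. The obstruction $\Ext^1(L,E)$ equals $[E,\tau L]=[L,E]^1$ by the Auslander-Reiten formula \eqref{Eq:ARFormula}, and Lemma~\ref{Lemma:OneDimensional}(ii) forces $[L,E]^1=0$ from $[L,E]=1$. Hence the long exact sequence degenerates to
$$
0\to\Hom(L,E)\to\Hom(L,X)\to\Hom(L,\tau^{-1}E)\to 0,
$$
yielding $[L,X]=[L,E]+[L,\tau^{-1}E]=2$. Since every indecomposable summand $X_j$ of $X$ satisfies $[L,X_j]\leq 1$ by Lemma~\ref{Lemma:OneDimensional}(i), $X$ must be decomposable; by Lemma~\ref{Lem:MiddleTerm}(iii) this is precisely equivalent to $[E,\tau^{-1}E]=1$.

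For general $s\geq 2$ I would induct on $s$. The key preliminary step is to secure the intermediate non-vanishings $[L,\tau^{-i}E]=1$ for every $0\leq i\leq s$; for type $A$ quivers the Hom-support $\{F\in\ind(\QQ):[L,F]=1\}$ is a combinatorial rectangle in $\Gamma_\QQ$ (via the interval description of indecomposables and the explicit action of $\tau$ on intervals), hence convex under iterated $\tau^{-1}$. Granted this, the base case applied to each pair $(L,\tau^{-i}E)$ gives $[\tau^{-i}E,\tau^{-(i+1)}E]=1$ for $0\leq i\leq s-1$; one then concatenates the resulting non-zero-relation paths in $\Gamma_\QQ$ via Proposition~\ref{Prop:PathBasisHom}, verifies via Lemma~\ref{Lem:ZeroRelations} that the concatenation itself is not a zero-relation, and concludes $[E,\tau^{-s}E]=1$. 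The hard part will be the combinatorial convexity of the Hom-support of $L$ under $\tau^{-1}$ together with the zero-relation check on the concatenated mesh path from $E$ to $\tau^{-s}E$; both hinge essentially on the type $A$ hypothesis and the explicit interval combinatorics.
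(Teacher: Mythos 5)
Your base case $s=1$ is correct and is in fact a perfectly valid (if slightly different) way to get $[E,\tau^{-1}E]=1$: the vanishing of $\Ext^1(L,E)$ via Lemma~\ref{Lemma:OneDimensional}(ii), the short exact sequence of Hom spaces, and Lemma~\ref{Lem:MiddleTerm}(iii) all check out. The problem is the inductive step, where there is a genuine gap that you yourself flag as ``the hard part.'' Your plan requires the intermediate non-vanishings $[L,\tau^{-i}E]=1$ for all $0\leq i\leq s$, and you propose to obtain them from the ``rectangle'' shape of the hammock $\{F\in\ind(\QQ):[L,F]\neq 0\}$. But in this paper that rectangular convexity of hammocks is explicitly stated as a \emph{consequence} of Lemma~\ref{Lem:Convexity} and Corollary~\ref{Cor:Convexity} (see the paragraph after the proof of Corollary~\ref{Cor:Convexity}), so invoking it here is circular unless you develop an independent proof via interval combinatorics and the explicit action of $\tau$; that machinery is not set up anywhere in the paper and you do not carry it out. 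The ``zero-relation check'' on the concatenated mesh path is also only gestured at, and the lemma you cite for it (Lemma~\ref{Lem:ZeroRelations}) does not say that a concatenation of non-zero-relations is a non-zero-relation.

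The paper's proof sidesteps all of this and never touches the intermediate $\tau^{-i}E$. One path $q$ from $L$ to $E$ exists by Proposition~\ref{Prop:PathBasisHom} since $[L,E]=1$; some path $p$ from $E$ to $\tau^{-s}E$ always exists by iterating the meshes. If $[E,\tau^{-s}E]$ were $0$, then Proposition~\ref{Prop:PathBasisHom} (applied knowing $p$ exists) forces the existence of a zero-relation $p'$ from $E$ to $\tau^{-s}E$; but then $p'\circ q$ is a zero-relation from $L$ to $\tau^{-s}E$, which contradicts $[L,\tau^{-s}E]=1$. Hence $[E,\tau^{-s}E]=1$ and Lemma~\ref{Lem:Convexity} finishes. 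In short: the paper derives the statement directly from the path calculus and only afterwards notes the hammock convexity as a corollary, whereas your proposal tries to use that convexity as an input. You would need to either adopt the paper's contradiction-via-zero-relation argument, or genuinely prove the rectangle property of hammocks from scratch; as written, the $s\geq 2$ case is not proved.
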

\begin{proof}
Let $q$ be a path from $L$ to $E$ in $\Gamma_\QQ$. Since $\tau^{-s}E$ is on the right of $E$ in $\Gamma_\QQ$ there exists a path $p$ from $E$ to $\tau^{-s}E$ in $\Gamma_\QQ$. Then  $p\circ q$ is a path from $L$ to $\tau^{-s}E$. If $[E,\tau^{-s}E]=0$, then by Proposition~\ref{Prop:PathBasisHom}, there exists a zero-relation $p'$ from $E$ to $\tau^{-s} E$. Then, the path  $p'\circ q$ would be a a zero-relation, too, contradicting the hypothesis $[L,\tau^{-s}E]=1$. The statement is then a consequence of Lemma~\ref{Lem:Convexity}.
\end{proof}
\begin{rem}
Corollary~\ref{Cor:Convexity} is not true for other Dynkin quivers (e.g. \cite[Page~94]{Schiffler}).
\end{rem}

For $L\in\ind(\QQ)$ the function $\dim\Hom(L,-):\ind(\QQ)\rightarrow\ZZ_{\geq0}$  is called a \emph{hammock function} in the terminology of \cite{Brenner}. The \emph{support} of the hammock function $\dim\Hom(L,-)$ (or for brevity of $\Hom(L,-)$) is the set of all $E\in\ind(\QQ)$ such that $[L,E]\neq0$; it is called a \emph{hammock}. By using Proposition~\ref{Prop:PathBasisHom}, Lemma~\ref{Lem:Convexity} and Corollary~\ref{Cor:Convexity} one can show that these hammocks are ``convex''. They have a typical rectangular shape, which explains the terminology.   Dually, the same holds for the function $\textrm{dim}\Hom(-,L)$ and for its support. In \cite[Section~3.1.4]{Schiffler} there are many examples of these supports. We notice that this convexity property of the hammocks is not true for other Dynkin quivers (see e.g. \cite[Section~3.3.4.1]{Schiffler}).

\subsubsection*{The relation $\preceq$} Given $E,L\in\textrm{ind}(\QQ)$, we say that $E\preceq L$ if there exists a path in $\Gamma_\QQ$ which starts in $E$ and ends in $L$. Since $\Rep(\QQ)$ is representation-directed  the relation $\preceq$ is a partial order. If $[E,L]\neq0$, then $E\preceq L$ by Proposition~\ref{Prop:PathBasisHom}. The opposite is not necessarily true, since a path from $E$ to $L$ could be a zero-relation. 

\subsubsection*{Sectional paths} A \emph{sectional path} in $\Gamma_\QQ$ is a path in $\Gamma_\QQ$ which does not factor through an almost split sequence.  

\begin{lem}\label{Lem:SecPaths}
Let $L,E\in\ind(\QQ)$. The following are equivalent:
\begin{enumerate}
\item There is a sectional path from $L$ to $E$;
\item $[L,E]=1$ and there is a unique path from $L$ to $E$ in $\Gamma_\QQ$;
\item $[L,E]=1$ and $[E,L]^1=[L,\tau E]=0$.
\end{enumerate}
\end{lem}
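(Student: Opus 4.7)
First, the Auslander-Reiten formula \eqref{Eq:ARFormula} gives $[E,L]^1 = [L,\tau E]$, so the two vanishings in (iii) are a single condition; I work with $[L,\tau E]=0$. I propose to prove the equivalences by the cycle (i)$\Rightarrow$(iii)$\Rightarrow$(ii)$\Rightarrow$(i). The central mechanism is to apply $\Hom(L,-)$ to the almost split sequence
\begin{equation}\label{pl-ar}
0 \to \tau E \to X \to E \to 0
\end{equation}
(if $E$ is projective, replace by the sequence $0\to\rad E\to E\to S_E\to 0$ and argue analogously). When $L\not\simeq E$, the almost-split property of the right-hand map yields the identity
\begin{equation}\label{pl-keyid}
[L,X] \;=\; [L,E] + [L,\tau E].
\end{equation}
Combined with the observation that every path $L\to E$ in $\Gamma_\QQ$ restricts, via its last arrow $X_{i}\to E$, to a path $L\to X_i$ for some indecomposable summand $X_i$ of $X$, this is the inductive engine of the proof.

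For (iii)$\Rightarrow$(i), I would induct on $\mathrm{dist}(L,E)$ in $\Gamma_\QQ$. The base case is trivial; for the inductive step, \eqref{pl-keyid} forces $[L,X]=1$, so exactly one summand $X_{i_0}$ of $X$ has $[L,X_{i_0}]=1$. I verify the inductive hypothesis on $(L,X_{i_0})$: the AR sequence ending at $X_{i_0}$ has middle term $Y$ which, by the mesh structure at $X_{i_0}$, contains $\tau E$ as a summand; applying \eqref{pl-keyid} to this AR sequence together with $[L,\tau E]=0$ forces $[L,\tau X_{i_0}]=0$. Thus by induction there is a sectional path $L\to\cdots\to L_{k-1}\to X_{i_0}$. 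Concatenating with $X_{i_0}\to E$ gives a path to $E$ whose only possible failure of sectionality is at the junction $L_{k-1}\to X_{i_0}\to E$, requiring $L_{k-1}\neq\tau E$; but $L_{k-1}$ lies on a non-zero-relation path from $L$, so $[L,L_{k-1}]=1$ whereas $[L,\tau E]=0$, hence $L_{k-1}\neq\tau E$.

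For (i)$\Rightarrow$(iii), a sectional path is by definition not a zero-relation, so Proposition~\ref{Prop:PathBasisHom} and Lemma~\ref{Lemma:OneDimensional} give $[L,E]=1$. I induct on the length of the sectional path: from the inductive hypothesis on the truncated sub-path $L\to\cdots\to L_k$ one has $[L,L_k]=1$ and $[L,\tau L_k]=0$. Since $L_k$ is a summand of $X$, if $X=L_k$ then \eqref{pl-keyid} immediately gives $[L,\tau E]=[L,L_k]-1=0$. If $X=L_k\oplus X_2$, the task reduces to showing $[L,X_2]=0$: a nonzero map $L\to X_2$ would produce a non-zero-relation path $L\to\cdots\to X_2$ in $\Gamma_\QQ$ whose penultimate vertex lies in the middle term of the AR sequence ending at $X_2$, and the mesh relation at $E$ together with $[L,\tau L_k]=0$ and sectionality $L_{k-1}\neq\tau L_k$ forces a contradiction. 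Then \eqref{pl-keyid} yields $[L,\tau E]=0$. Finally, (iii)$\Rightarrow$(ii) follows by the same inductive bookkeeping (uniqueness of the path descends from the unique choice of $X_{i_0}$ at each stage), and (ii)$\Rightarrow$(iii) is a direct count: if $[L,\tau E]\neq 0$ then \eqref{pl-keyid} gives $[L,X]\geq 2$, hence two distinct indecomposable summands of $X$ admit paths from $L$, producing two distinct paths $L\to E$ in $\Gamma_\QQ$ and contradicting (ii).

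\emph{Main obstacle.} The subtle step is ruling out $[L,X_2]=1$ in (i)$\Rightarrow$(iii) when the middle term $X$ has two summands. This cannot be done by dimension-counting in Hom-spaces alone and requires exploiting the type $A$ mesh geometry---each vertex of $\Gamma_\QQ$ has at most two incoming arrows organized in AR meshes---to translate the sectionality condition $L_{k-1}\neq\tau L_k$ into a Hom-vanishing at $X_2$. A careful analysis of the possible paths in $\Gamma_\QQ$ from $L$ through the mesh at $E$, using Proposition~\ref{Prop:PathBasisHom} to identify which paths are zero-relations, should close this gap.
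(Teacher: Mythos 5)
Your cycle of implications runs (i)$\Rightarrow$(iii)$\Rightarrow$(ii)$\Rightarrow$(i), whereas the paper runs (i)$\Rightarrow$(ii)$\Rightarrow$(iii)$\Rightarrow$(i). This is not merely cosmetic: the paper's (ii)$\Rightarrow$(iii) and (iii)$\Rightarrow$(i) are both two-line deductions. For (ii)$\Rightarrow$(iii): if $[L,\tau E]=1$ then Corollary~\ref{Cor:Convexity} gives $[\tau E,E]=1$, so by Lemma~\ref{Lem:MiddleTerm} the mesh at $E$ has two indecomposable middle terms, hence two distinct paths $L\to E$, contradicting (ii). For (iii)$\Rightarrow$(i): a path $p$ from $L$ to $E$ exists because $[L,E]=1$; if $p$ factored through an almost split sequence, Lemma~\ref{Lem:ZeroRelations} would give a path $L\to\tau E$, which by Proposition~\ref{Prop:PathBasisHom} and $[L,\tau E]=0$ must be a zero-relation, and composing through the mesh yields a zero-relation $L\to E$, contradicting $[L,E]=1$. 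All the inductive work is then concentrated in (i)$\Rightarrow$(ii), which the paper disposes of by Proposition~\ref{Prop:PathBasisHom}. Your approach spreads the induction over two implications and substitutes the explicit almost-split identity $[L,X]=[L,E]+[L,\tau E]$ for the paper's appeals to Proposition~\ref{Prop:PathBasisHom} and Corollary~\ref{Cor:Convexity}; this is a genuinely different route.

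The genuine gap is in your (i)$\Rightarrow$(iii), exactly where you flag the ``main obstacle.'' When the middle term $X$ of the mesh at $E$ has two summands $L_{m-1}\oplus X_2$, you need $[L,X_2]=0$, but ``the mesh relation at $E$ together with $[L,\tau L_k]=0$ and sectionality forces a contradiction'' is not an argument. The relevant mesh is the one at $L_{m-1}$, not at $E$: by sectionality $L_{m-2}\neq\tau E$, so in type $A$ the middle term $Y$ of the almost split sequence ending in $L_{m-1}$ is $\tau E\oplus L_{m-2}$, and your own key identity applied there gives $[L,Y]=[L,L_{m-1}]+[L,\tau L_{m-1}]=1$; since $[L,L_{m-2}]\geq 1$, this forces $[L,\tau E]=0$. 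One must also treat separately the case $L_{m-1}$ projective (use $\rad L_{m-1}$ in place of an AR sequence) and the base case $m=1$ where $L_{m-1}=L$ (there, Lemma~\ref{Lem:MiddleTerm} applied to $0\to\tau E\to X\to E\to 0$ gives $[L,X_2]=0$ directly). Your (iii)$\Rightarrow$(i) inductive peeling is essentially sound but also glosses over the projective case and the degenerate case $Y=\tau E$, and your (iii)$\Rightarrow$(ii) is only a one-clause gesture. As written the proposal does not constitute a complete proof.
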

\begin{proof}
The implication $\xymatrix@1@C=10pt{(i)\ar@{=>}[r]&(ii)}$ follows from Proposition~\ref{Prop:PathBasisHom} by induction on the length of a sectional path. To prove $\xymatrix@1@C=10pt{(ii)\ar@{=>}[r]&(iii)}$ let us suppose, for a contradiction, that $[L,\tau E]=[L,E]=1$; then by Corollary~\ref{Cor:Convexity}, $[\tau E,E]=1$ and thus by Lemma~\ref{Lem:MiddleTerm} the almost split sequence ending in $E$ has two indecomposable middle terms. It follows that there are at least two paths from $L$ to $E$.  To prove $\xymatrix@1@C=10pt{(iii)\ar@{=>}[r]&(i)}$ let $p$ be a path from $L$ to $E$ and suppose for a contradiction that $p$ factors through an almost split sequence. Then by Lemma~\ref{Lem:ZeroRelations} there is a path from $L$ to $\tau E$ and since $[L,\tau E]=0$, by Proposition~\ref{Prop:PathBasisHom} there exists a zero-relation from $L$ to $\tau E$. Then there exists a zero-relation from $L$ to $E$, too, against the hypothesis.
\end{proof}
A sectional path in $\Gamma_\QQ$ can be directed either towards the north-east or the south-east.  We call the former an \emph{$NE$ sectional path} and the latter an \emph{$SE$ sectional path}. Two consecutive sectional paths of $\Gamma_\QQ$ either form a sectional path (if they have the same direction) or they form a \emph{broken line} (if they have different directions).

In view of Lemma~\ref{Lem:SecPaths} (iii), if there is a sectional path from $L$ to $E$ then $[E,L]^1=0$. It then follows from the Happel-Ringel Lemma \cite[Lem.~4.1]{HR} that  a sectional path is either mono or epi. In particular if an arrow of a sectional path \eqref{Eq:PathInGamma} $f_i:L_i\rightarrow L_{i+1}$ is epi, then all the successive maps $f_{i+1},\cdots, f_k$ must be epi, too.  

\begin{lem}\label{Lem:SecEpMono}
The cokernel of a sectional mono is a sectional epi and the kernel of a sectional epi is a sectional mono. They form a broken line.

\end{lem}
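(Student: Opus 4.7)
The two assertions are dual via the standard $\mathrm{k}$-duality $D:\Rep(\QQ)\to \Rep(\QQ^{op})$, which exchanges monos and epis and reverses arrows in $\Gamma_\QQ$, so it suffices to prove the first. I would induct on the length $k$ of the sectional mono $p: L_1\to\cdots\to L_{k+1}$, with composite $f: L_1\hookrightarrow L_{k+1}$ and cokernel $C$, showing simultaneously that $C$ is indecomposable and that $L_{k+1}\twoheadrightarrow C$ is represented by a sectional path in $\Gamma_\QQ$ going in the direction opposite to $p$, so that the concatenation is a broken line.

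For the base case $k=1$, $f:L\hookrightarrow E$ is an irreducible mono. Note that $L$ is non-injective (every irreducible arrow out of an injective indecomposable is an epi, by the rectangular shape of $\Gamma_\QQ$ in type $A$), so I may use the AR-sequence starting at $L$,
\[
0 \to L \to Y \to \tau^- L \to 0,
\]
with $E$ a summand of $Y$. By Lemma~\ref{Lem:MiddleTerm}, either $Y=E$ (the terminating case, where $C\cong \tau^- L$ and the single irreducible arrow $E\to \tau^- L=C$, which in $\Gamma_\QQ$ points in the direction opposite to $L\to E$, is the desired sectional epi of length one), or $Y=E\oplus E'$. In the latter case, a direct check shows that $E'\cap L=0$ inside $E\oplus E'$ (using that $f$ is mono), so the restriction $q|_{E'}: E'\to \tau^- L$ of the AR-epi is itself a monomorphism, and one computes
\[
\tau^- L\,/\,q(E') \;\cong\; (E\oplus E')\,/\,(f(L)\oplus E') \;\cong\; E/f(L) \;=\; C,
\]
yielding a short exact sequence $0\to E'\to \tau^- L\to C\to 0$. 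This reduces the pair $(L,E)$ to $(E',\tau^- L)$, with the new irreducible mono $E'\to \tau^- L$ in the same direction (in $\Gamma_\QQ$) as the original $L\to E$. Since the new ``$E$'' has moved strictly $\tau^-$-east in $\Gamma_\QQ$, iteration must terminate in the first case after finitely many steps. Reading off the arrows $E\to \tau^- L\to \tau^- L_{\mathrm{new}}\to\cdots\to C$ produced at each step assembles the required sectional epi from $E$ to $C$ and shows $C$ indecomposable.

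For the inductive step $k\ge 2$, I would split $p$ as $L_1\to L_2$ followed by the length-$(k-1)$ sectional mono $L_2\to\cdots\to L_{k+1}$, apply the base case and the inductive hypothesis respectively, and combine using the short exact sequence $0\to L_2/L_1\to C\to L_{k+1}/L_2\to 0$ from the third isomorphism theorem together with the convexity of hammocks in $\Gamma_\QQ$ (Proposition~\ref{Prop:PathBasisHom}, Corollary~\ref{Cor:Convexity}) to extend the sectional epi from $L_{k+1}$ to $L_{k+1}/L_2$ into one reaching $C$. The principal obstacle is the base case: verifying that the iterated arrows $E\to \tau^- L\to \tau^- L_{\mathrm{new}}\to\cdots$ genuinely assemble into a \emph{sectional} path in $\Gamma_\QQ$ (i.e., that no pair of consecutive arrows factors through an almost split sequence) and realize the natural quotient $E\twoheadrightarrow C$. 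The example $U_{2,4}\hookrightarrow U_{1,4}$ in the alternating $A_4$ quiver (Example~\ref{Ex:ARQuiverAEvenAltern}), whose cokernel $S_1$ is reached by the length-two sectional epi $U_{1,4}\to U_{1,3}\to U_{1,1}$, illustrates that this sectional epi can have length strictly greater than one, so some care is needed in the bookkeeping.
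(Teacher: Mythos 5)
Your approach---induction on the length of the sectional path, with the base case handled by iterating through AR-sequences---is genuinely different from the paper's, which treats a sectional mono $p:L\hookrightarrow E$ of \emph{arbitrary} length in one stroke: apply $\Hom(L,-)$, $\Hom(-,C)$, $\Hom(E,-)$, $\Hom(-,E)$ successively to the short exact sequence $0\to L\to E\to C\to 0$ and use $[L,E]=1$, $[E,L]^1=0$, $[E,L]=0$ (from Lemma~\ref{Lem:SecPaths} and representation-directedness) to obtain $[L,C]=0$, $[C,C]=[E,C]=1$ and $[C,E]^1=0$. Lemma~\ref{Lem:SecPaths} applied to the pair $(E,C)$ then identifies $q:E\twoheadrightarrow C$ as a sectional epi, and the broken-line statement is immediate because concatenating $L\to E$ with $E\to C$ into a single sectional path would force $[L,C]=1$. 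No induction on the length is needed; your reduction to the mono case by duality is fine.

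Your base case is essentially workable: the chain $E\to\tau^-L\to\cdots$ is indeed sectional because at each stage $E^{(i-1)}$ and $L^{(i)}$ are \emph{distinct} summands of the middle term of the AR-sequence starting at $L^{(i-1)}$, so $E^{(i-1)}\neq\tau(\tau^-L^{(i)})=L^{(i)}$; termination holds because the column of $E^{(i)}$ strictly increases in the finite quiver $\Gamma_\QQ$; and the composite realizes $q$ up to scalar since $[E,C]=1$ by Proposition~\ref{Prop:PathBasisHom}. The genuine gap is the inductive step, which you never carry out and whose outline is misdirected: since $L_{k+1}/L_2$ is a \emph{quotient} of $C=L_{k+1}/L_1$, the map $L_{k+1}\twoheadrightarrow C$ would have to be a \emph{prefix} of the sectional epi $L_{k+1}\twoheadrightarrow L_{k+1}/L_2$, not an ``extension'' of it, and it is not clear from the short exact sequence $0\to L_2/L_1\to C\to L_{k+1}/L_2\to0$ alone that $C$ is indecomposable (you must rule out splitting) or that the quotient map is sectional. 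The cleanest repair is to abandon the induction and run the Hom/Ext calculation above directly on the arbitrary-length sectional mono: the hypothesis that $p$ is a sectional path supplies exactly $[L,E]=1$, $[E,L]^1=0$, $[E,L]=0$ regardless of the length of $p$.
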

\begin{proof}
Let $\xi:\xymatrix@C=20pt{0\ar[r]&L\ar|p[r]&E\ar|q[r]&C\ar[r]&0}$ be a short exact sequence such that $p$ is sectional mono. We apply $\Hom(L,-)$ to $\xi$ and get $[L,C]=0$, then we apply $\Hom(-,C)$  and get  $[C,C]=[E,C]$. We apply $\Hom(E,-)$ and get $[E,C]=[E,E]=1$. Thus $[C,C]=1$ and, hence, $C$ is indecomposable. By applying $\Hom(-,E)$, we get $[C,E]^1=0$. Thus $q$ is a sectional epi by Lemma~\ref{Lem:SecPaths}. If $q$ is sectional epi, by dual arguments one shows that $L$ is indecomposable and $p$ is sectional mono. The last statement is an immediate consequence of Lemma~\ref{Lem:SecPaths}.
\qedhere
\end{proof}
\begin{rem}
Lemma~\ref{Lem:SecEpMono} is not true for other Dynkin quivers. 
\end{rem}

The next lemma describes the position in $\Gamma_\QQ$ of the cokernel of a sectional mono and of the kernel of a sectional epi.
\begin{lem}\label{Lem:KerCokerSectional}
Let $p:\xymatrix{L\ar@{..>}[r]&E}$ be a sectional path. Then
\begin{enumerate}
\item $p$ is mono if and only if there exists a sectional path which starts from $E$ and contains a vertex that does not lie in the support of $\Hom(L,-)$. The cokernel of $p$ is the $\preceq$-minimal vertex of this sectional path that does not lie in the support of  $\Hom(L,-)$.
\item $p$ is epi if and only if there exists a sectional path which ends in $L$ and  contains a vertex that does not lie in the support of $\Hom(-,E)$. The kernel of $p$ is the $\preceq$-maximal vertex of this sectional path that does not lie in the support of  $\Hom(-,E)$.
\end{enumerate}
\end{lem}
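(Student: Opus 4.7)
The plan is as follows. Part (ii) follows from part (i) by passing to the opposite quiver $\QQ^{op}$, which exchanges sectional monomorphisms with sectional epimorphisms, cokernels with kernels, and the covariant hammock $\Hom(L,-)$ with the contravariant hammock $\Hom(-,L)$. So it suffices to prove (i); throughout I assume $p$ has positive length (the trivial case $L=E$ is degenerate).

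For direction $(\Leftarrow)$, suppose there is a sectional path $\gamma: E = E_0 \to \dots \to E_k$ with $E_k$ outside the support of $\Hom(L,-)$.  By the Happel--Ringel lemma (recalled after Lemma \ref{Lem:SecPaths}) our sectional $p$ is mono or epi; assume for contradiction it is epi.  Then $\im p = E$, and since $\gamma$ is a nonzero sectional path, $\gamma(E) = \im\gamma \neq 0$, so $\gamma \circ p: L \to E_k$ is a nonzero morphism.  Hence $[L,E_k]\geq 1$, contradicting $E_k \notin \mathrm{supp}\,\Hom(L,-)$.

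For direction $(\Rightarrow)$, assume $p$ is a sectional mono.  By Lemma \ref{Lem:SecEpMono} the cokernel $C$ is indecomposable and the quotient $q: E \to C$ is a sectional epi forming a broken line with $p$; in particular $C$ lies on the sectional path $\gamma: E = E_0 \to E_1 \to \dots \to E_k = C$ starting at $E$ in the direction opposite to $p$.  Applying $\Hom(L,-)$ to $0 \to L \to E \to C \to 0$ and using $[L,L]=[L,E]=1$ together with the rigidity $[L,L]^1 = 0$ (Lemma \ref{Lemma:OneDimensional}) gives $[L,C] = 0$, so $C$ is outside the hammock.

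To conclude that $C$ is $\preceq$-minimal on $\gamma$ with this property I need $[L,E_i]=1$ for $0 \leq i < k$.  Fix such an $i$ and factor $q = q' \circ \gamma_i$, where $\gamma_i: E \to E_i$ is the prefix and $q': E_i \to C$ the suffix of $\gamma$; both are sectional, hence each is mono or epi by Happel--Ringel.  If $\gamma_i$ is mono, then $\gamma_i \circ p$ is a composition of monomorphisms and is therefore nonzero.  If $\gamma_i$ is epi, then since $q = q' \circ \gamma_i$ is epi the factor $q'$ is also epi, and as $E_i \neq C$ (distinct vertices of $\Gamma_\QQ$, hence non-isomorphic indecomposables) $q'$ is a proper surjection, giving $\dim E_i > \dim C$; consequently $\dim\ker\gamma_i = \dim E - \dim E_i < \dim E - \dim C = \dim L = \dim\im p$, so $\im p \not\subseteq \ker\gamma_i$ and $\gamma_i \circ p \neq 0$.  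In either case $[L,E_i] \geq 1$, and equality follows from Lemma \ref{Lemma:OneDimensional}.  I expect the main hurdle to be the dimension comparison in the epi sub-case, where the key point is the proper non-isomorphism of $q'$ forced by the separation $E_i \neq C$ in $\Gamma_\QQ$.
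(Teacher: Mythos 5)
Your proof is correct, but it takes a genuinely different route from the paper in both directions. In the forward direction, after reducing to $[L,C]=0$ via Lemma~\ref{Lem:SecEpMono} and the short exact sequence (as the paper does), you establish $\preceq$-minimality of $C$ on $\gamma$ by a hands-on analysis: the Happel--Ringel mono/epi dichotomy applied to each prefix $\gamma_i$, combined with a dimension count ($\dim E_i > \dim C$ forces $\dim\ker\gamma_i < \dim L = \dim\im p$) to conclude $\gamma_i \circ p \neq 0$ and hence $[L,E_i]=1$. The paper instead computes $[L,\tau C]=[C,L]^1=1$ and invokes the almost split sequence ending in $C$ to place a summand of its middle term inside the hammock; completing the minimality claim from there implicitly leans on the rectangular (convex) shape of hammocks, which your explicit prefix analysis avoids. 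For the converse, you argue by a quick contradiction — if $p$ were epi, $\gamma\circ p$ would have image $\im\gamma \neq 0$, forcing $E_k$ into the support — whereas the paper argues constructively: it takes the $\preceq$-minimal vertex $C$ of $\gamma$ outside the support, shows $[C,L]^1 = 1$, notes that the middle term of a non-split $\xi\in\Ext^1(C,L)$ is indecomposable by Lemma~\ref{Lem:MiddleTerm} since $[L,C]=0$, and identifies it with $E$ by uniqueness of sectional paths (Lemma~\ref{Lem:SecPaths}), thereby re-deriving the cokernel description at the same time. Your $\Leftarrow$ is shorter but establishes only the monomorphism, delegating the cokernel identification to the forward direction, which is perfectly adequate; the paper's construction is more structural. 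Both arguments are sound.
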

\begin{proof}
We prove $(i)$. The proof of $(ii)$ is dual. If $p$ is mono then by Lemma~\ref{Lem:SecEpMono} there is a sectional path from $E$ to its cokernel $C$, and $[L,C]=0$. It follows that $C$ does not lie in the support of $\Hom(L,-)$. Since, $[L,\tau C]=[C,L]^1=1$, $\tau C$ lies in the support of $\Hom(L,-)$. Let $X$ be the middle term of the almost split sequence ending in $C$. Then, since $[L,C]=0$, we have $[L,X]=[L,\tau C]=1$. Thus there exists an indecomposable direct summand of $X$ which lies in the support of $\Hom(L,-)$. This shows that $C$ is $\preceq$-minimal among those vertices which do not belong to the support of $\Hom(L,-)$.  Viceversa if there exists a sectional path starting from $E$ which contains a vertex $D$ which do not belong to the support of $\Hom(L,-)$, let $C$ be the $\preceq$-minimal vertex of this sectional path with this property. Let $X$ be the middle term of the almost split sequence ending in $C$. Then $[L,\tau C]=[L,X]=1$. By the AR-formula, it follows that $[C,L]^1=1$. Since $[L,C]=0$, the middle term $E'$ of the non-split short exact sequence $\xi\in\Ext^1(C,L)$ is indecomposable and by Lemma~\ref{Lem:MiddleTerm}, $[L,E']=1=[E',C]$ and $[L,E']^1=0=[E',C]^1$. By Lemma~\ref{Lem:SecPaths}, there is a sectional path from $L$ to $E'$ and from $E'$ to $C$. It follows that $E'=E$. 
\end{proof}

\subsubsection*{Rectangles and extensions} 
Let us consider two subquivers of $\Gamma_\QQ$ of the form:

\begin{equation}\label{Rectangles}
\begin{array}{cc}
\vcenter{
\xymatrix@R=10pt@C=10pt{
&&F_1\ar@{..>}^{q_2}[dr]&\\
&&&E\\
L\ar@{..>}^{p_1}[uurr]\ar@{..>}_{p_2}[dr]&&&\\
&F_2\ar@{..>}_{q_1}[uurr]&&}}
&
\vcenter{
\xymatrix@R=10pt@C=10pt{
&&F\ar@{..>}^{q}[dr]&\\
&&&E\\
L\ar@{..>}^{p}[uurr]&&&}}
\end{array}
\end{equation}
where  $p_1$, $p_2$,  $q_1$, $q_2$, $p$ and $q$ are sectional paths, $q=\coker(p)$ and $p=\ker(q)$ (compare with Lemma~\ref{Lem:KerCokerSectional}). The subquiver on the left is called a  \emph{non-degenerate rectangle} of $\Gamma_\QQ$ and the one on the right is called a 
\emph{degenerate rectangle} of $
\Gamma_\QQ$. 

By a \emph{rectangle from $L$ to $E$} of $\Gamma_\QQ$ we mean either a non-degenerate or a degenerate rectangle shown in \eqref{Rectangles}. It is clear that there exists at most one such rectangle.

\begin{prop}\label{Prop:RectanglesBasisExt}
Let $E, L\in\ind(\QQ)$. Then $[E,L]^1=1$ if and only if there exists a  rectangle in $\Gamma_\QQ$ from $L$ to $E$.  This rectangle is degenerate if and only if $[L,E]=0$. 
\end{prop}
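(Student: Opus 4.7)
The plan is to transport back and forth between non-split extensions and rectangles by analyzing the middle term via Lemma~\ref{Lem:MiddleTerm}, together with the sectional-path characterization of Lemma~\ref{Lem:SecPaths} and its cokernel version (Lemma~\ref{Lem:SecEpMono}).

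For the forward implication, I would assume $[E,L]^1=1$ and fix a non-split short exact sequence $\xi\colon 0\to L\xrightarrow{\iota}F\xrightarrow{\pi}E\to 0$. By Lemma~\ref{Lem:MiddleTerm} the middle term is either indecomposable (exactly when $[L,E]=0$) or decomposes as $F=F_1\oplus F_2$ with $F_1\not\simeq F_2$ (when $[L,E]=1$), and in both cases part (iv) of that lemma yields $[F,L]^1=[E,F]^1=0$. For any indecomposable summand $F_i$, non-splitness forces the component $\iota_i\colon L\to F_i$ to be non-zero, so $[L,F_i]=1$ by Lemma~\ref{Lemma:OneDimensional}; combining $[F_i,L]^1=0$ with the AR-formula $[L,\tau F_i]=[F_i,L]^1=0$, Lemma~\ref{Lem:SecPaths} shows $\iota_i$ is represented by a sectional path in $\Gamma_\QQ$, and the dual argument handles $\pi_i\colon F_i\to E$. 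When $F$ is indecomposable, $\iota$ is a sectional mono whose cokernel $\pi$ is sectional epi, so Lemma~\ref{Lem:SecEpMono} presents them as a broken line --- precisely the degenerate rectangle. When $F=F_1\oplus F_2$, the two sectional paths out of $L$ must have different orientations (one NE, one SE), because from any vertex of $\Gamma_\QQ$ there is at most one sectional path in each direction and $F_1\not\simeq F_2$; applying the dual statement to $\pi_1,\pi_2$, and using Lemma~\ref{Lem:KerCokerSectional} to locate the sectional extensions of $F_i$ toward $E$, one verifies that the four sectional paths close up into the non-degenerate rectangle with corners $L,F_1,F_2,E$.

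The converse is obtained by reversing the construction. A degenerate rectangle $L\xrightarrow{p}F\xrightarrow{q}E$ comes by definition with $q=\coker(p)$, giving a short exact sequence $0\to L\to F\to E\to 0$ which is non-split because $F$ is indecomposable and $F\not\simeq L\oplus E$. For a non-degenerate rectangle, Proposition~\ref{Prop:PathBasisHom} together with the rectangle's combinatorial shape (no broken line through a corner is a zero-relation) guarantees that the two composites $q_2\circ p_1$ and $q_1\circ p_2$ are non-zero elements of $\Hom(L,E)$; by Lemma~\ref{Lemma:OneDimensional} this forces $[L,E]=1$ and makes the composites proportional. Rescaling one path so that $q_2p_1=q_1p_2$, the morphisms $\iota=(p_1,p_2)\colon L\to F_1\oplus F_2$ and $\pi=(q_2,-q_1)\colon F_1\oplus F_2\to E$ satisfy $\pi\iota=0$, and a dimension count via the Ringel-Euler form assembles them into the desired non-split short exact sequence. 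Either way $[E,L]^1\geq 1$, so Lemma~\ref{Lemma:OneDimensional} gives $[E,L]^1=1$; the degenerate-versus-non-degenerate assertion then follows at once from the dichotomy $[L,E]=0\Leftrightarrow F\text{ indecomposable}$ of Lemma~\ref{Lem:MiddleTerm}.

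The delicate step I expect to absorb the most work is the orientation matching in the non-degenerate case --- arguing that the NE-sectional extension of $F_2$ and the SE-sectional extension of $F_1$ genuinely meet at the same vertex $E$, and dually that in the converse the two broken-line composites do not accidentally vanish. This rests on the convexity of hammocks in type $A$ developed through Lemma~\ref{Lem:Convexity}, Corollary~\ref{Cor:Convexity} and Lemma~\ref{Lem:KerCokerSectional}, and is the only point where type-$A$ specific combinatorics of $\Gamma_\QQ$ is really invoked beyond the generic AR-theoretic lemmas already in place.
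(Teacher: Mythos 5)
Your proposal is correct in substance and follows the same overall strategy as the paper, built on the same key lemmas (Lemma~\ref{Lem:MiddleTerm}, Lemma~\ref{Lem:SecPaths}, Proposition~\ref{Prop:PathBasisHom}), but the converse direction is handled by a genuinely different mechanism that is worth contrasting.

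For the forward implication you and the paper proceed identically: take a non-split $\xi$, apply Lemma~\ref{Lem:MiddleTerm} to control the middle term, and deduce from Lemma~\ref{Lem:SecPaths} that the component maps $L\rightarrow F_i$ and $F_i\rightarrow E$ are sectional. Your ``delicate orientation matching'' is in fact automatic and needs no appeal to Lemma~\ref{Lem:KerCokerSectional}: if $p_1\colon L\rightarrow F_1$ and $q_2\colon F_1\rightarrow E$ were sectional in the same direction, their composite $q_2p_1$ would itself be a sectional path from $L$ to $E$, and Lemma~\ref{Lem:SecPaths}(iii) would force $[E,L]^1=0$, contradicting the hypothesis. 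This gives the rectangle shape directly and is the cleaner way to record the observation you flagged as the hard step.

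For the converse the paper argues differently from you. Instead of building the short exact sequence by hand, it observes that a non-degenerate rectangle contains $\tau E$ and no zero-relations, so Proposition~\ref{Prop:PathBasisHom} gives $[L,\tau E]=1$, and the AR-formula \eqref{Eq:ARFormula} immediately yields $[E,L]^1=[L,\tau E]=1$; only afterwards does it identify the middle term with $F_1\oplus F_2$ via Lemma~\ref{Lem:MiddleTerm} and Lemma~\ref{Lem:SecPaths}. Your route --- rescale the composites so that $\pi\circ\iota=0$ and then show the resulting complex $0\rightarrow L\rightarrow F_1\oplus F_2\rightarrow E\rightarrow 0$ is exact --- can be made to work but costs more: ``a dimension count via the Ringel-Euler form'' as stated does not by itself produce injectivity of $\iota$, surjectivity of $\pi$, and the equality $\dimv F_1+\dimv F_2=\dimv L+\dimv E$; you would need, for instance, to know that one of the $p_i$ is mono and one of the $q_i$ is epi, facts that in the paper are derived as consequences of the proposition (see \eqref{Eq:rectangles}) rather than inputs to it. The paper's detour through $[L,\tau E]$ sidesteps exactly this circularity, so if you keep your version you should replace the dimension count with the AR-formula argument, or at least prove the dimension-vector identity independently from additivity of $\dimv$ along the two almost split sequences sitting inside the rectangle.
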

\begin{proof}
If $[E,L]^1=1$, let $\xi\in\Ext^1(E,L)$ be a basis and let $F$ be its middle term. Let $F_i$ be an indecomposable direct summand of $F$. By Lemmas~\ref{Lem:MiddleTerm} and \ref{Lem:SecPaths}, the non-zero morphisms $L\rightarrow F_i$ and $F_i\rightarrow E$ are sectional paths. Viceversa, we show that a rectangle gives rise to a short exact sequence. If the rectangle is degenerate, this is by definition. If there is a non-degenerate rectangle \eqref{Rectangles} from $L$ to $E$ in $\Gamma_\QQ$, then it does not contain zero-relations but it contains every path from $L$ to $E$ and it contains $\tau E$. By Proposition~\ref{Prop:PathBasisHom}, $[L,\tau E]=[L,E]=1$. Thus $[E,L]^1=1$. Let $\xi\in\Ext^1(E,L)$ be non-zero. By Lemma~\ref{Lem:MiddleTerm}, the middle term of $\xi$ has two non-isomorphic direct summands $F'_1$ and $F'_2$. By Lemmas~\ref{Lem:MiddleTerm} and \ref{Lem:SecPaths}, the morphisms $L\rightarrow F'_i$ and $F'_i\rightarrow E$ are sectional. It follows that $F_1'=F_1$ and $F_2'=F_2$.
\end{proof}
\noindent Proposition~\ref{Prop:RectanglesBasisExt} states that  the rectangles of $\Gamma_\QQ$ are cartesian. As a consequence we get the following useful properties of a non-degenerate rectangle \eqref{Rectangles}:
\begin{eqnarray}\label{Eq:rectangles}
&&\textrm{opposite sides are either both mono or both epi;
}\\
&& \ker(p_i)\simeq \ker(q_i) \textrm{ and } \coker(p_i)\simeq \coker(q_i).
\end{eqnarray}

\subsubsection*{Join and Meet}
Recall the partial order $\preceq$. Given $E,L\in\ind(\QQ)$ we denote 
$$
\begin{array}{c}
E\vee L=\sup\{E,L\}=\min\{F\in(\Gamma_\QQ)_0|\, E\preceq F \textrm{ and }L\preceq F\},\\
E\wedge L=\inf\{E,L\}=\max\{F\in(\Gamma_\QQ)_0|\, F\preceq E \textrm{ and }F\preceq L\}.
\end{array}
$$
Of course, $E\vee L$ and $E\wedge L$ might not exist. If $E\vee L$ exists then it is the intersection of two sectional paths, one starting from $E$ and one starting from $L$; dually, if $E\wedge L$ exists then it is the intersection of two sectional paths, one ending in $E$ and one ending in $L$. They both exist if and only if  there is a rectangle
$$
\xymatrix@C=10pt@R=10pt{
&&E\ar@{..>}[dr]&\\
&&&E\vee L\\
E\wedge L\ar@{..>}[uurr]\ar@{..>}[dr]&&&\\
&L\ar@{..>}[uurr]&&}
$$
in $\Gamma_\QQ$.

\subsection{Generic quotients}\label{Sec:GenericQuot} Let $\QQ$ be a quiver of type $A$. Let $L$ be an indecomposable $\QQ$-representation and $M$ be a $\QQ$-representation. In this section we give a necessary and sufficient condition for $L$ to embed into $M$ and we specify the position of the indecomposable direct summands of  the generic quotient of $M$ by $L$ in $\Gamma_\QQ$.

Suppose that $[L,M]\neq0$. Let $T^0_1,\cdots, T^0_r$ be the $\preceq$-minimal indecomposable direct summands (one for each isoclass) of $M$ which belong to the support of $\Hom(L,-)$.  We order them from top to bottom in $\Gamma_\QQ$ as depicted in Table~\ref{Table:Generic Quotient} for $r=4$.
Let $T^0=\oplus_{i=1}^rT^0_i$ and let $\widetilde{M}$ be the direct complement of $T^0$ in $M$ so that $M=T^0\oplus \widetilde{M}$. For every $i=1,\cdots, r-1$ let $T^1_i=T^0_i\vee T^0_{i+1}$. We also define $T^1_0$ and $T^1_r$ as follows: if there is an NE sectional path from $L$ to $T^0_1$ we define $T^1_0$ to be zero, otherwise $T^1_0$ is the $\preceq$-minimal vertex of the NE sectional path starting from $T^0_1$ not belonging to the support of $\Hom(L,-)$; similarly,  $T^1_{r}$ is zero if there is an SE sectional path from $L$ to $T^0_r$ and otherwise it is the $\preceq$-minimal vertex of the SE sectional path starting from $T^0_r$ not belonging to the support of $\Hom(L,-)$. (Compare with Table~\ref{Table:Generic Quotient}.)  Clearly, $T^1_i$ might not exist. In case they all exist, we put $T^1=\oplus_{i=0}^r T^1_i$. By construction, using Proposition~\ref{Prop:RectanglesBasisExt}, one  immediately sees that $T^0$ and $T^1$ are rigid (see Remark~\ref{Rem:Tilting}). For  $i=1,\cdots, r$ we denote by $L^{(i)}$ and $L_{(i)}$ the two indecomposables such that $L^{(i)}\wedge L_{(i)}=L$ and $L^{(i)}\vee L_{(i)}=T^0_i$. We place $L^{(1)},\cdots, L^{(r)}$ in the same NE sectional path starting from $L$. 
\begin{table}[hbtp]
\includegraphics[scale=0.8]{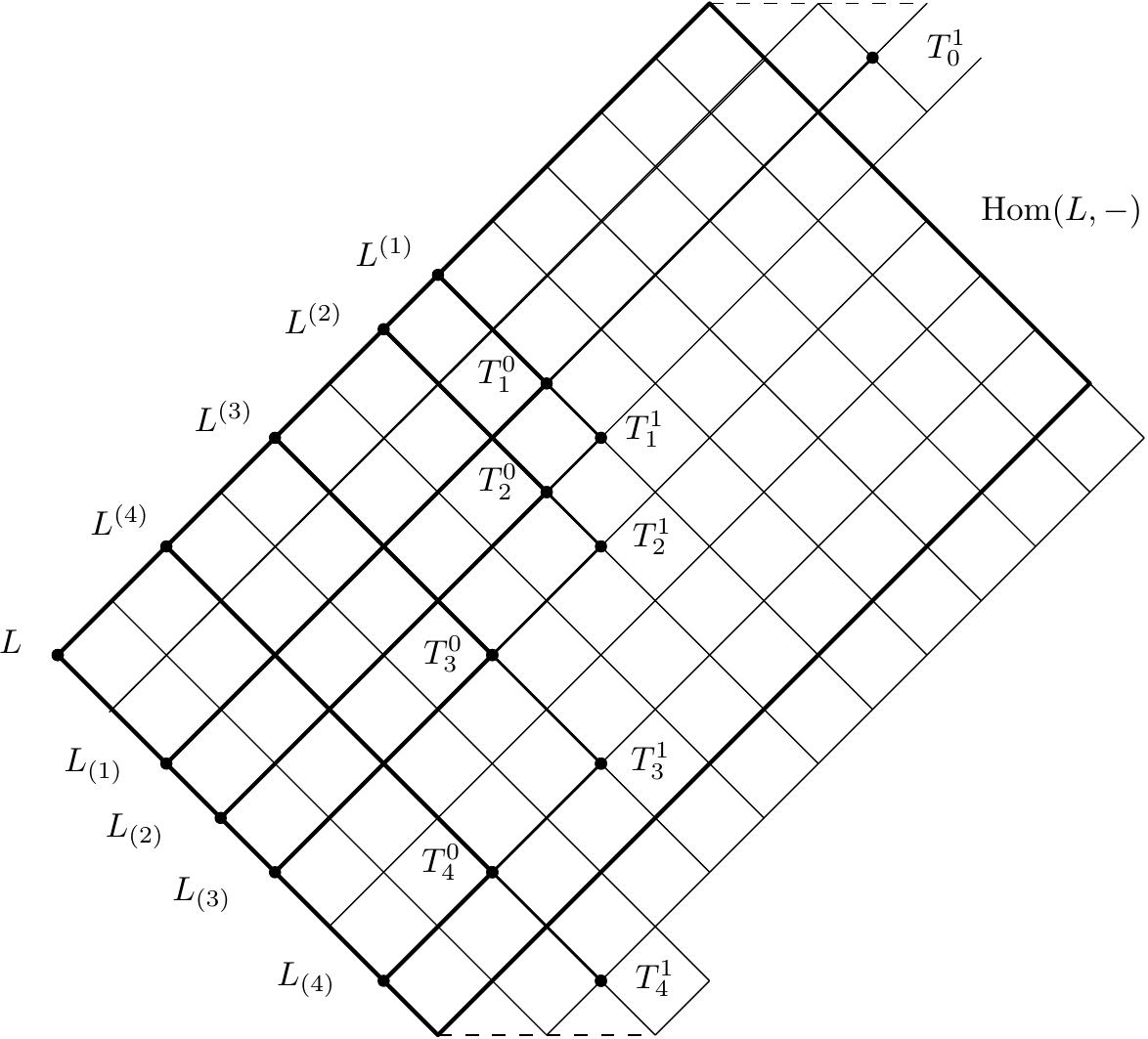}
\caption{The generic quotient of $M=T^0_1\oplus\cdots\oplus T^0_4$ by $L$ is $T^1_0\oplus\cdots \oplus T^1_4$. This picture represents a part of $\Gamma_\QQ$ containing the support of $\Hom(L,-)$. Every side of every square represents an arrow of $\Gamma_\QQ$ which points towards the north-east or the south-east.}
\label{Table:Generic Quotient}
\end{table}

\begin{prop}\label{Prop:GenQuot}
We retain all notations and objects introduced above. Then $L$ embeds into $M$ if and only if $T^1_i$ exists for every $i=0,\cdots, r$. In this case the generic quotient of $M$ by $L$ is $T^1\oplus\widetilde{M}$. 
\end{prop}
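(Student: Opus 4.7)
My plan is to prove the proposition in two stages: first reduce to the case $\widetilde{M} = 0$ via an automorphism argument, then compute the generic quotient of $T^0$ by $L$ explicitly using the AR-combinatorics of type $A$. For the reduction, let $N$ be an indecomposable summand of $\widetilde{M}$ lying in the support of $\Hom(L,-)$; by $\preceq$-minimality of the $T^0_j$'s there is an index $j$ with $T^0_j \prec N$, and the convexity of the forward hammock of $L$ (Lemma~\ref{Lem:Convexity} and Corollary~\ref{Cor:Convexity}) guarantees $[T^0_j, N] = 1$ with the composition $\Hom(L, T^0_j) \times \Hom(T^0_j, N) \to \Hom(L, N)$ surjective. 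Pick a basis $\phi : T^0_j \to N$. Given any embedding $\iota : L \hookrightarrow M$, the unipotent automorphism $g \in \Aut(M)$ acting as $(x_{T^0_j}, x_N) \mapsto (x_{T^0_j}, x_N + c\,\phi(x_{T^0_j}))$ on the $T^0_j \oplus N$-block sends $\iota_N$ to $\iota_N + c(\phi \circ \iota_{T^0_j})$; since $\Hom(L, N)$ is one-dimensional and $\phi \circ \iota_{T^0_j}$ is a basis vector there, a suitable $c$ kills $\iota_N$. For generic $\iota$ every component $\iota_{T^0_j}$ is non-zero, so iterating reduces to $\iota' = g \circ \iota$ factoring through $T^0 \hookrightarrow M$, with quotient $(T^0/\iota'(L)) \oplus \widetilde{M}$ equal to the original generic quotient.

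For the main step I would induct on $r$. In the base case $r = 1$, $\Hom(L, T^0_1)$ is one-dimensional and the unique (up to scalar) morphism $\iota : L \to T^0_1$ is mono precisely when both sides of the fork out of $T^0_1$ admit valid cokernel contributions, i.e., when both $T^1_0$ and $T^1_1$ exist. Lemma~\ref{Lem:SecEpMono} and Lemma~\ref{Lem:KerCokerSectional} then identify the cokernel with the direct sum of the cokernels along the NE and SE sectional paths out of $T^0_1$, which by definition are $T^1_0$ and $T^1_1$. For the inductive step $r \geq 2$, consider a generic linear combination $\iota = \sum_{i=1}^r c_i \phi_i$ with $\phi_i$ basis vectors of $\Hom(L, T^0_i)$ and all $c_i \neq 0$. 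The key insight is that for each adjacent pair $(T^0_i, T^0_{i+1})$, the genericity of $(c_i, c_{i+1})$ forces the naive quotient summands $T^0_i/\iota_i(L)$ and $T^0_{i+1}/\iota_{i+1}(L)$ to fuse into the common extension realized by $T^1_i = T^0_i \vee T^0_{i+1}$ (see the non-degenerate rectangle in Proposition~\ref{Prop:RectanglesBasisExt}); non-adjacent pairs contribute independently because their joins lie outside the hammock of $\Hom(L,-)$. Combined with the base-case analysis for the extremal sides ($T^1_0$ and $T^1_r$), this yields the total quotient $\bigoplus_{i=0}^r T^1_i = T^1$.

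Genericity of this quotient is confirmed via Corollary~\ref{Cor:EquivalenceOrdersDynkin}: since for every indecomposable $E$ the function $\dim \Hom(-, E)$ is upper-semicontinuous on the irreducible set $\Quot(L, M)$, it suffices to verify that our explicit $T^1 \oplus \widetilde{M}$ minimizes it, which follows by comparing with an arbitrary quotient obtained from a non-generic $\iota$ (e.g., setting some $c_i = 0$ produces a proper degeneration of $T^1 \oplus \widetilde{M}$). The main obstacle will be making the combinatorial analysis in the inductive step precise, namely showing that for generic scalars the image $\iota(L) \subseteq T^0$ is exactly the diagonal subrepresentation whose quotient realizes $\bigoplus T^1_i$. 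This requires exploiting the cartesian property of rectangles together with careful tracking of which sectional paths from $L$ coincide across adjacent rectangles---specifically the identification $L_{(i)} = L^{(i+1)}$, which encodes the extension structure binding $T^0_i/\iota_i(L)$ and $T^0_{i+1}/\iota_{i+1}(L)$ into $T^1_i$.
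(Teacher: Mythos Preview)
Your reduction to $\widetilde{M}=0$ via unipotent automorphisms is essentially the paper's first step, and it is correct.

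For the main step the paper takes a very different and shorter route than your induction on $r$. Instead of computing the quotient explicitly, it argues as follows: both $T^0$ and $T^1$ are rigid, so $T^0$ is the generic representation of its dimension vector; since $L\oplus T^1$ has the same dimension vector and $L$ obviously embeds into it, Bongartz's cancellation theorem (Theorem~\ref{Thm:Bongartz}) forces $L\hookrightarrow T^0$, and rigidity of $T^1$ immediately makes $T^1$ the generic quotient. The ``only if'' direction is handled by a dimension-vector trick: if some $T^1_j$ fails to exist, one locates an injective $I_k$ witnessing that $\dimv T^0-\dimv L$ has a negative entry, so no embedding can exist. This bypasses all explicit analysis of the embedding.

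Your approach has two concrete gaps beyond the inductive-step vagueness you already flag. First, your genericity argument at the end is not a proof: showing that \emph{some} non-generic choices of $c_i$ degenerate $T^1\oplus\widetilde{M}$ does not show that $T^1\oplus\widetilde{M}$ dominates \emph{every} quotient. The fix is immediate once you observe that $T^1$ is rigid (Remark~\ref{Rem:Tilting}), hence has open orbit; you should invoke this rather than semicontinuity. Second, and more seriously, you do not address the ``only if'' direction at all for $r\geq 2$: why does non-existence of some $T^1_j$ prevent $L$ from embedding into $T^0$? Your base case asserts this equivalence for $r=1$ but does not prove it, and the inductive step does not propagate it. Also, in the base case the map $L\to T^0_1$ is in general \emph{not} sectional (it traverses a non-degenerate rectangle through $L^{(1)}$ and $L_{(1)}$), so Lemmas~\ref{Lem:SecEpMono} and~\ref{Lem:KerCokerSectional} do not apply directly; you would need to factor through $L^{(1)}$ or $L_{(1)}$ and use the cartesian property of the rectangle (equation~\eqref{Eq:rectangles}) to extract the two cokernel pieces.
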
 

\begin{proof}
We denote by $M_1,\cdots, M_N$ the indecomposable direct summands of $M$, ordered so that $M_i=T^0_i$ for $i=1,\cdots, r$. For a homomorphism  $f:L\rightarrow M$ we denote by $f_k:L\rightarrow M_k$ the induced homomorphism. Let $f:L\rightarrow M$ be a homomorphism such that $f_k\neq0$ whenever $[L,M_k]\neq0$. For every $i=1,\cdots, r$, let $\iota_i:=f_i:L\rightarrow T^0_i$ and set $\iota=(\iota_1,\cdots, \iota_r)^t:L\rightarrow T^0$. 
We claim that there exists an automorphism $\ell$ of $M$ such that $\ell\circ f=(\iota,0)^t$. Indeed, given two indices $i\neq j$ such that  $i\leq r$, $f_j\neq 0$ and $[M_i,M_j]=1$, i.e. either $M_i\simeq M_j$ or $M_j$ is not $\preceq$-minimal in the support of $\Hom(L,-)$, then there is a homomorphism $\ell_{ji}: M_i\rightarrow M_j$ and  by Lemma~\ref{Lemma:OneDimensional}, up to a non-zero scalar factor, $f_j=\ell_{ji}\circ f_i$. We then define the automorphism $\ell$ in the obvious way to get the claimed equality $(\ell\circ f)_i=\iota_i$ for $1\leq i\leq r$ and  $(\ell\circ f)_i=0$ for $i>r$. 

Since $f$ and $\ell\circ f$ have the same quotient, $f$ is a generic embedding of $L$ into $M$ if and only if $\iota$ is a generic embedding of $L$ into $T^0$ and in this case the generic quotient is the direct sum of $\widetilde{M}$ and the generic quotient of $T^0$ by $L$.

Since $T^1$ is rigid, it remains to prove that $\iota:L\rightarrow T^0$ is mono if and only if $T^1$ is defined, and in this case there exists a short exact sequence
\begin{equation}\label{Eq:SESLTilting}
\xymatrix{
0\ar[r]&L\ar^\iota[r]&T^0\ar^{\pi}[r]&T^1\ar[r]&0.
}
\end{equation}
We notice that, for $1\leq i\leq r-1$,  $T^1_i$ exists if and only if there exists a rectangle from $L_{(i)}$ to $T^1_i$. By Proposition~\ref{Prop:RectanglesBasisExt}, this happens if and only if there exists a short exact sequence $0\rightarrow L_{(i)}\rightarrow T^0_{i}\oplus L_{(i+1)}\rightarrow T^1_i\rightarrow 0$. Moreover, by Lemma~\ref{Lem:KerCokerSectional}, $T^1_0$ and $T^1_r$ exist if and only if the sectional paths $L\rightarrow  L_{(1)}$ and $L_{(r)}\rightarrow T^0_r$ are mono. We write 
$$
\dimv T^0-\dimv L=\sum_{i=0}^{r} \left(\dimv T^0_i+\dimv L_{(i+1)}-\dimv L_{(i)}\right)
$$
with the convention that $L_{(0)}=L$, $T^0_0$ and $L_{(r+1)}$ are zero. We show that $T^1$ is defined if and only if $\dimv T^0-\dimv L$ is a dimension vector. In this case, it equals $\dimv T^1$ by the discussion above. We put $\mathbf{t}_i:=\dimv T^0_i+\dimv L_{(i+1)}-\dimv L_{(i)}$.  

Let us prove that if there exists an index $j$ such that $T^1_j$ does not exist then $\dimv T^0-\dimv L$ is not a dimension vector. The argument we use is slightly different in the case when $j=0$, and we include this case in the general discussion by highlighting the differences in parentheses. The SE sectional path  starting from $T^0_j$ (or the $NE$ sectional path starting from $L_{(1)}$, if $j=0$)  terminates with an injective $I$. The almost split sequence ending in $I$ is not a zero-relation, because, otherwise, $I$ would not be in the support of $\Hom(L_{(j)},-)$  and $T^1_j$ would exist by Lemma~\ref{Lem:KerCokerSectional}. Thus, there exists an indecomposable injective  $I_k$ and  an NE arrow (or SE arrow for $j=0$) $I_k\rightarrow I$ in $\Gamma_\QQ$. Then, $[L_{(i)}, I_k]=1$ for every $0\leq i\leq j$, $[L_{(i)}, I_k]=0$ for every $i>j$ and $[T^0,I_k]=0$. By \eqref{Eq:KthEntryDimVectEulerForm} we get that the $k$-th component of $\mathbf{t}_i$ is
$$
(\mathbf{t}_i)_k=
\left\{
\begin{array}{cc}
-1&\textrm{if }i=j;\\
0&\textrm{otherwise.}
\end{array}
\right.
$$
We conclude that 
$\dimv T^0-\dimv L$ is not a dimension vector, and hence $L$ cannot embed into $T^0$ and a fortiori into $M$.

Suppose that $T^1$ is defined. Then $\dimv T^1=\dimv T^0-\dimv L$. Obviously, $L$ embeds into $L\oplus T^1$ and $[L,L\oplus T^1]^1=0$. Since $T^0$ is rigid, it is generic and hence $T^0\degg L\oplus T^1$. By Bongartz's Theorem~\ref{Thm:Bongartz} $L$ embeds into $T^0$ and since $T^1$ is rigid, $T^1$ is the generic quotient of $T^0$ by $L$, finishing the proof. 
\end{proof}
\begin{rem}\label{Rem:Tilting}
By construction both $T^0$ and $T^1$ are $\Hom$- and $\Ext$-orthogonal and $T^0\oplus T^1$ can be completed to a separating tilting object $T$ of $\Rep(\QQ)$ whose indecomposable direct summands form a section of $\Gamma_\QQ$ (see \cite{ASS} for the definitions). Then $L$ is a torsion-free module for $T$ and the short exact sequence \eqref{Eq:SESLTilting} is the $\textrm{add}\,T$ resolution of $L$. 
\end{rem}

\section{Symmetric quivers of type $A$}\label{Sec:SymmTypeA}
\noindent
Let $(\QQ,\sigma)$ be a symmetric quiver of type $A_n$. Let $\ee$ be $+1$ or $-1$. Let $(V,\Psi)$ be an $\ee$-quadratic space for $(\QQ,\sigma)$ of dimension vector $\mathbf{d}$. We denote the variety of $\QQ$-representations of dimension vector $\mathbf{d}$ either by $R(\A,V)$ or by $R_\mathbf{d}$ and the structure group by $\G_\mathbf{d}$. We denote by $R(\A,V)^{\Psi,\ee}\subset R(\A,V)$ the subvariety of $\ee$-representations. Sometimes, for simplicity of notation and in view of Remark~\ref{Rem:NoForm}, we use the shorter notation $R_\mathbf{d}^\ee\subset R_\mathbf{d}$ for the subvariety $R(\A,V)^{\Psi,\ee}$. By $\G_\mathbf{d}^\ee$ we denote the group of graded isometries. In this section we describe useful facts about $\ee$-representations of $(\QQ,\sigma)$. The main results are Propositions~\ref{prop:DeltaEven}, \ref{Prop:KeyLemma} and \ref{Prop:ESubquot}. They are crucial in our proof of Theorem~\ref{Thm:MainThm}. 
\subsection{Split and non-split types}
There are four possible types for the pair $(\QQ, \ee)$: $(A_{odd}, \pm 1)$ and $(A_{even}, \pm1)$. The main difference between the four types 
 arises in the structure of their indecomposable $\ee$--representations (see Section~\ref{Sec:IndecEpsilon}): only in types $(A_{odd}, 1)$ and $(A_{even}, -1)$, there are indecomposable $\ee$-representations of type $(I)$. In the other types $(A_{odd}, -1)$ and $(A_{even}, 1)$ the indecomposable $\ee$-representations are all of the form $L\oplus \nabla L$, for $L\in\ind(\QQ)$. This is due to the fact that the indecomposable $\QQ$-representations are thin and a symplectic vector space must be even dimensional, compare with \cite[Proposition~3.6 and 3.8]{DW}. For this reason we write that $(A_{odd}, -1)$ and $(A_{even}, 1)$ are the \emph{split types} while $(A_{odd}, 1)$ and $(A_{even}, -1)$ are the \emph{non-split types}. Accordingly, we say that $(\QQ, \ee)$ is of split or non-split type.
 \\[1ex]
 \noindent
In the next lemmas we use the notation introduced in \eqref{Eq:HomPMNabla}.

\begin{lem}\label{Lem:ImageERepresentation}
Let $M\in\Rep(\QQ)$ and let $f\in\Hom(M,\nabla M)^{\ee\nabla}$.
Then the image of $f$ is an $\ee$--representation.
\end{lem}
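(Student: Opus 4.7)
The plan is to construct the required $\ee$-form on $\im(f)$ directly from $f$ via the standard epi-mono factorization, and then to verify compatibility with $\nabla$ using the hypothesis $\nabla f=\ee f$.

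First, I would factor $f$ as $f=\iota\circ\pi$ with $\pi\colon M\twoheadrightarrow \im(f)$ the canonical epimorphism and $\iota\colon \im(f)\hookrightarrow \nabla M$ the canonical monomorphism. Since $\nabla$ is an exact contravariant functor and $\nabla\nabla\simeq \mathrm{id}$ (via the evaluation isomorphism we have been using throughout Section~\ref{Sec:Nabla}), applying $\nabla$ yields $\nabla f=\nabla\pi\circ\nabla\iota$ where now $\nabla\iota\colon M\twoheadrightarrow\nabla\im(f)$ is epi and $\nabla\pi\colon \nabla\im(f)\hookrightarrow \nabla M$ is mono.

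The key step is to produce a homomorphism $\Phi\colon \im(f)\to\nabla\im(f)$ satisfying $\iota=\nabla\pi\circ\Phi\circ\pi$ (up to the scalar $\ee$). For this I would check that $\ker\pi\subseteq\ker\nabla\iota$: if $m\in\ker\pi$ then $f(m)=0$, hence $\nabla f(m)=\ee f(m)=0$, so $\nabla\pi(\nabla\iota(m))=0$, and since $\nabla\pi$ is mono, $\nabla\iota(m)=0$. By the universal property of the epimorphism $\pi$, this yields a unique factorization $\nabla\iota=\Phi\circ\pi$ with $\Phi\colon \im(f)\to\nabla\im(f)$. The map $\Phi$ is surjective because $\nabla\iota$ is, and since $\dim\im(f)=\dim\nabla\im(f)$ it is an isomorphism.

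Finally, I would verify $\nabla\Phi=\ee\Phi$, which is the condition making $\Phi$ an $\ee$-form on $\im(f)$. Starting from $\iota\circ\pi=f$ and applying $\nabla$ gives $\nabla\pi\circ\nabla\iota=\nabla f=\ee f=\ee\iota\circ\pi$; substituting $\nabla\iota=\Phi\circ\pi$ yields $\nabla\pi\circ\Phi\circ\pi=\ee\iota\circ\pi$, and cancelling the epi $\pi$ on the right gives $\nabla\pi\circ\Phi=\ee\iota$. On the other hand, dualising $\nabla\iota=\Phi\circ\pi$ and using $\nabla\nabla=\mathrm{id}$ gives $\iota=\nabla\pi\circ\nabla\Phi$, hence $\nabla\pi\circ(\ee\nabla\Phi)=\ee\iota=\nabla\pi\circ\Phi$; cancelling the mono $\nabla\pi$ on the left produces $\Phi=\ee\nabla\Phi$, equivalently $\nabla\Phi=\ee\Phi$ since $\ee^2=1$. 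Thus $(\im(f),\Phi)$ is an $\ee$-representation.

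The only real obstacle is the bookkeeping in identifying the double dual with the identity so that the diagram chase in the last paragraph makes sense; once this canonical identification (as in Subsection~\ref{Subsec:Psi} and Remark~\ref{Rem:Psi}) is fixed, the argument is a straightforward functoriality computation with no hidden subtlety.
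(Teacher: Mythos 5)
Your proof is correct. It ends up constructing exactly the data that the paper's one-line proof describes, but in the equivalent $\Psi$-encoding of Subsection~\ref{Subsec:Psi}: the paper simply declares the pairing $\langle f(x),f(y)\rangle := f(x)(y)$ on $\im(f)$ and asserts it is a non-degenerate, $\sigma$-compatible $\ee$-form (leaving the reader to check, for instance, that $\nabla f=\ee f$ makes this well-defined on $\im(f)$ and $\ee$-symmetric); you instead produce the associated isomorphism $\Phi\colon\im(f)\to\nabla\im(f)$ via the image factorization and verify $\nabla\Phi=\ee\Phi$ by a functoriality computation. The two are related by $\Phi(\pi(x))(\pi(y))=f(y)(x)=\ee\,f(x)(y)$, i.e. your $\Phi$ is $\ee$ times the paper's form, which is harmless. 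Your version has the merit of making the implicit verifications explicit, in particular that $\ker\pi\subseteq\ker\nabla\iota$ (the well-definedness of the descended form) and that $\Phi$ lands in $\Hom_\A(\im(f),\nabla\im(f))$, both of which the paper silently relies on. One small point worth spelling out in your writeup: surjectivity of $\Phi$ plus $\dim(\nabla\im(f))_i=\dim(\im(f))_{\sigma(i)}$ gives componentwise inequalities in both directions, whence equality and invertibility of each $\Phi_i$; a one-clause remark would close that gap cleanly.
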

\begin{proof}
On $\im(f)$ we define the bilinear form $\langle f(x),f(y)\rangle=f(x)(y)$. This is a non-degenerate, $\sigma$-compatible $\ee$-form on the $\QQ_0$-graded vector space $\im(f)$. 
\end{proof}
\begin{lem}\label{Lem:ImageNableInvMorphism}
Let $L\in\ind(\QQ)$. 
\begin{enumerate}
\item In type $A_{odd}$, $\Hom(L,\nabla L)=\Hom(L,\nabla L)^{\nabla}$.
\item In type $A_{even}$, $\Hom(L,\nabla L)=\Hom(L,\nabla L)^{-\nabla}$.
\end{enumerate}
In other words, $\Hom(L,\nabla L)^{\ee\nabla}$ is zero if $(\QQ,\ee)$ is of split type.
\end{lem}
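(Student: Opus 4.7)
The plan is to reduce everything to computing a single sign. If $\Hom(L,\nabla L)=0$ there is nothing to prove, so I may assume it is non-zero; by Lemma~\ref{Lemma:OneDimensional}(i) applied to the two indecomposables $L$ and $\nabla L$, the space $\Hom(L,\nabla L)$ is one-dimensional. Since $\nabla$ is a contravariant involution (after canonically identifying $\nabla\nabla M\simeq M$ by evaluation), it restricts to an involution of this one-dimensional space and hence acts by a sign $\epsilon_L\in\{\pm1\}$. I will show that $\epsilon_L=(-1)^{n+1}$, which equals $+1$ in type $A_{odd}$ and $-1$ in type $A_{even}$, thereby proving (i) and (ii); the final assertion then follows immediately because in split type the sign $(-1)^{n+1}$ equals $-\ee$.

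The key translation is between morphisms $L\to\nabla L$ and bilinear forms on the underlying space of $L$. To $f:L\to\nabla L$ I associate the form $\beta_f$ defined by $\beta_f(v,w):=f_k(v)(w)$ for $v\in L_k$ and $w\in L_{\sigma(k)}$, and zero on all pairs $L_i\times L_j$ with $j\neq\sigma(i)$. Unwinding the definition of $\nabla$, and in particular the sign in $\nabla M_\alpha=-M_{\sigma(\alpha)}^{\ast}$, the condition that $f$ be a morphism becomes the compatibility
\[
\beta_f(L_\alpha v,w)+\beta_f(v,L_{\sigma(\alpha)} w)=0
\]
for every arrow $\alpha$ of $\QQ$, while $\nabla f=\epsilon f$ translates into the $\epsilon$-symmetry $\beta_f(w,v)=\epsilon\,\beta_f(v,w)$.

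I will then compute $\epsilon_L$ in coordinates. Since $L=U_{i,j}$ is thin I fix bases $\{e_k\}_{k\in[i,j]}$ making $L_\alpha=\mathrm{id}$ for every arrow $\alpha$ in the support, which is possible because the underlying graph on $[i,j]$ is a path. Setting $\beta_k:=\beta_f(e_k,e_{\sigma(k)})$, the compatibility at an arrow between $k$ and $k+1$ produces the recurrence $\beta_{k+1}=-\beta_k$ whenever $\alpha$ and $\sigma(\alpha)$ both lie in the support, i.e.\ whenever $k$ and $k+1$ both lie in the symmetric intersection $[i,j]\cap[n+1-j,n+1-i]$. This yields $\beta_k=(-1)^{k-k_0}\beta_{k_0}$ on this intersection, hence
\[
\beta_{\sigma(k)}=\beta_{n+1-k}=(-1)^{n+1-2k}\beta_k=(-1)^{n+1}\beta_k,
\]
so $\beta_f$ is $(-1)^{n+1}$-symmetric and $\nabla f=(-1)^{n+1}f$, as desired.

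The main obstacle is essentially bookkeeping: one has to check that the sign in the recurrence really is $-1$ irrespective of the orientation of $\alpha$ in $\QQ$ and of whether $\sigma(\alpha)$ happens to coincide with $\alpha$ or to be a different arrow, and that the argument goes through verbatim in the case $L\not\simeq\nabla L$, where $\beta_f$ is only supported on the proper sub-interval $[i,j]\cap[n+1-j,n+1-i]$. Both points are routine once one observes that the compatibility at $\alpha$ is automatically trivial unless both $\alpha$ and $\sigma(\alpha)$ belong to the support of $L$, which exactly cuts out the symmetric intersection above.
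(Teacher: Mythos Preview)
Your proof is correct and takes a genuinely different, more computational route from the paper's argument. You translate $f\in\Hom(L,\nabla L)$ into a $\sigma$-compatible bilinear form on $L$, use thinness to reduce the morphism condition to a sign recurrence $\beta_{k+1}=-\beta_k$ along the symmetric interval $[i,j]\cap\sigma([i,j])$, and read off $\epsilon_L=(-1)^{n+1}$ directly. The paper instead argues conceptually: if $\Hom(L,\nabla L)=\Hom(L,\nabla L)^{\ee\nabla}$ for some $\ee$, then by Lemma~\ref{Lemma:ImageIndecomposable} the image of a nonzero $f:L\to\nabla L$ is indecomposable, and by Lemma~\ref{Lem:ImageERepresentation} this image is an $\ee$-representation; since an indecomposable (thin) $\ee$-representation of type~(I) can only exist in the non-split types, this forces $\ee=+1$ for $A_{odd}$ and $\ee=-1$ for $A_{even}$. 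Your approach is self-contained and elementary, at the cost of coordinate bookkeeping; the paper's is shorter but leans on the split/non-split structure theory already set up in Section~\ref{Sec:SymmTypeA}.

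One minor remark: your closing claim that the compatibility at $\alpha$ is ``automatically trivial'' unless both $\alpha$ and $\sigma(\alpha)$ lie in the support of $L$ is not literally true for every orientation---a boundary arrow pointing \emph{into} the symmetric interval could in principle force $\beta_p=0$. This is harmless, though: you are already assuming $\Hom(L,\nabla L)\neq 0$, so a nonzero $f$ exists and the boundary arrows must be oriented compatibly; the recurrence on the interior then determines $f$ up to scalar and your sign computation goes through unchanged.
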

\begin{proof}
By Lemma~\ref{Lemma:OneDimensional}, $[L,\nabla L]\leq1$. If $[L,\nabla L]=0$, then there is nothing to prove. If  $[L,\nabla L]=1$, let $f:L\rightarrow \nabla L$ be a non-zero homomorphism. Suppose that $\Hom(L,\nabla L)=\Hom(L,\nabla L)^{\ee\nabla}$ for some $\ee\in\{+1,-1\}$. Then, by Lemma~\ref{Lemma:ImageIndecomposable}, the image of $f$ is indecomposable and by Lemma~\ref{Lem:ImageERepresentation} it is an $\ee$-representation. Thus $(\QQ,\ee)$ must be of non-split type, i.e.  $\ee=+1$ for $A_{odd}$ and $\ee=-1$ for $A_{even}$.
\end{proof}
\subsection{Properties of the AR-quiver} We study  properties of the Auslander-Reiten quiver of $\QQ$ in case $(\QQ,\sigma)$ is a symmetric quiver of type $A_n$. 

\subsubsection*{$\nabla$-invariant rectangles}
By \cite[Proposition~3.4]{DW}, the self-duality $\nabla$ fulfills 
\begin{equation} \nabla\tau=\tau^-\nabla.
\end{equation}
 Thus, $\nabla$ induces an involution of $\Gamma_\QQ$ which reverses the orientation of the arrows and fixes a vertical middle line (compare with Examples~\ref{Ex:ARQuiverA3}, \ref{Ex:ARQuiverAEvenAltern} and \ref{Ex:ARQuiverAOddAltern}). The fixed vertices and their direct sums are called \emph{$\nabla$-invariants}. For every $L\in\ind(\QQ)$, there is an integer $k$, such that $\nabla L=\tau^k L$. Assume that $[L,\nabla L]=1$ and $L\not\simeq \nabla L$. Then $\nabla L=\tau^{-k} L$ for some $k\geq 1$ and by Corollary~\ref{Cor:Convexity}, $[\nabla L,L]^1=[L,\tau\nabla L]=1$. Then by Lemma~\ref{Lem:MiddleTerm} there exists a non-degenerate rectangle (actually a square) in $\Gamma_\QQ$ from $L$ to $\nabla L$ whose middle vertices $G_1,G_2\in\ind(\QQ)$ are $\nabla$-invariants:
\begin{equation}\label{Eq:NablaInvSquare}
\xymatrix@R=10pt@C=10pt{
&G_1\ar@{..>}^{q_1}[dr]&\\
L\ar@{..>}^{p_1}[ur]\ar@{..>}_{p_2}[dr]&&\nabla L.\\
&G_2\ar@{..>}_{q_2}[ur]
}
\end{equation}
This is called a \emph{$\nabla$-invariant rectangle} of $\Gamma_\QQ$. It has the following property.
\begin{lem}
Either $q_1$ or $q_2$ is mono. 
\end{lem}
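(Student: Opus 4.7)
The plan is to exploit the interplay between the self-duality $\nabla$, which acts on $\Gamma_\QQ$ as a reflection across the fixed vertical line containing $G_1$ and $G_2$, and the mono/epi structure of the sectional paths on the sides of the rectangle. The first step is to identify $\nabla p_i$ with $q_i$ (up to a nonzero scalar). Since $G_1$ is $\nabla$-invariant, $\nabla p_1$ is a morphism $\nabla G_1 = G_1 \to \nabla L$. Because $\nabla$ reverses the orientation of the arrows of $\Gamma_\QQ$, $\nabla p_1$ is an SE sectional path from $G_1$ to $\nabla L$. By Proposition~\ref{Prop:PathBasisHom} such a sectional path spans $\Hom(G_1,\nabla L)$, so $\nabla p_1 = c\, q_1$ for some $c\in\CC^\ast$; the same argument gives $\nabla p_2 = c'\, q_2$ for some $c'\in\CC^\ast$.

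The second step is to observe that, since $\nabla$ is an exact contravariant self-equivalence of $\Rep(\A)$, it exchanges monomorphisms and epimorphisms. Combined with the identification of the previous paragraph, this yields
\[
p_1 \text{ is mono} \iff q_1 \text{ is epi}, \qquad p_2 \text{ is mono} \iff q_2 \text{ is epi}.
\]
The third ingredient is the general rectangle property \eqref{Eq:rectangles}: the parallel sides $(p_1, q_2)$ (both NE) are of the same type, and the parallel sides $(p_2, q_1)$ (both SE) are of the same type. Finally, by the Happel-Ringel lemma quoted before Lemma~\ref{Lem:SecEpMono}, every sectional path is either mono or epi, so $q_1$ and $q_2$ each fall into one of these two classes.

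Now the conclusion follows by a short case analysis: suppose $q_1$ is not mono. Then $q_1$ is epi, so by the $\nabla$-duality $p_1$ is mono, and then by the rectangle property applied to the parallel pair $(p_1, q_2)$ we get $q_2$ mono. Thus in every case at least one of $q_1, q_2$ is a monomorphism. The only subtle point is the identification $\nabla p_i = c_i \, q_i$; this rests on the $\nabla$-invariance of $G_1$ and $G_2$, which in turn follows from their lying on the vertical fixed line of the involution $\nabla$ on $\Gamma_\QQ$, and is implicit in the very existence of a $\nabla$-invariant rectangle of the shape \eqref{Eq:NablaInvSquare}.
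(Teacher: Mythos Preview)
Your proof is correct and follows essentially the same approach as the paper's: identify $q_i$ with $\nabla p_i$ up to a nonzero scalar via the $\nabla$-invariance of $G_i$ and the one-dimensionality of $\Hom(G_i,\nabla L)$, then combine the rectangle property \eqref{Eq:rectangles} (parallel sides have the same mono/epi type) with the fact that $\nabla$ swaps monos and epis. The only cosmetic difference is that the paper runs the case analysis starting from $q_2$ (if $q_2$ is epi then $p_1$ is epi by \eqref{Eq:rectangles}, hence $\nabla p_1\sim q_1$ is mono), whereas you start from $q_1$; the logic is symmetric and equally valid.
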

\begin{proof}
Since $G_1$ and $G_2$ are $\nabla$-invariants, $q_1$ and $q_2$ are non-zero scalar multiples of $\nabla p_1$ and $\nabla p_2$, respectively. Thus, if $q_2$ is epi, then $p_1$ is epi by \eqref{Eq:rectangles} and hence $\nabla p_1$ is mono by duality. It follows that $q_1$ is mono.
\end{proof}
\begin{cor}
Suppose that $q_1:G_1\rightarrow T$ is mono and let $p:\xymatrix@C=15pt{F\ar@{..>}[r]&G_1\ar@{..>}[r]&T}$ be a sectional path  which factors through $G_1$. Then $p$ is mono. 
\end{cor}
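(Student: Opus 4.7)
The plan is to reduce the statement to the Happel–Ringel dichotomy (a sectional path is mono or epi) by first identifying the second half of $p$ with $q_1$. Decompose the sectional path as $p = p''\circ p'$ where $p'\colon F\dashrightarrow G_1$ and $p''\colon G_1\dashrightarrow T$ are the two sectional sub-paths through which $p$ factors. Note that $q_1\colon G_1\to T$ is itself sectional (it is one of the sides of the $\nabla$-invariant square, which are sectional by construction), so by Lemma~\ref{Lem:SecPaths} we have $[G_1,T]=1$ and there is a unique path from $G_1$ to $T$ in $\Gamma_\QQ$.

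From this uniqueness I would conclude that $p''$ and $q_1$ span the same one-dimensional space $\Hom(G_1,T)$, so $p'' = \lambda q_1$ for some non-zero scalar $\lambda$. In particular, $p''$ is mono because $q_1$ is mono by hypothesis.

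Now I would apply the Happel–Ringel lemma cited after Lemma~\ref{Lem:SecPaths}: the sectional composite $p = p''\circ p'$ is either mono or epi. Suppose, for contradiction, that $p$ is epi. Since in any category $g\circ f$ epi forces $g$ epi, the factor $p''$ would then be epi. Combined with $p''$ mono, this would make $p''$ an isomorphism; but $G_1$ and $T$ are distinct vertices of $\Gamma_\QQ$, hence non-isomorphic indecomposable $\QQ$-representations, so no such isomorphism can exist. This contradiction forces $p$ to be mono.

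I do not expect a serious obstacle: the argument is essentially an assembly of the uniqueness of sectional paths between fixed endpoints (Lemma~\ref{Lem:SecPaths}), the Happel–Ringel mono/epi dichotomy for sectional paths, and the elementary fact that a composite epi has epi right factor. The only point requiring care is ensuring $p''$ really coincides with $q_1$ (up to scalar), which is precisely where Lemma~\ref{Lem:SecPaths}(ii) is used.
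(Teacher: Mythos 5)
Your argument is correct. The paper omits a proof for this corollary, but your reasoning uses precisely the two ingredients established just before it in the text — the Happel--Ringel mono/epi dichotomy for sectional paths, and the uniqueness of the sectional path from $G_1$ to $T$ from Lemma~\ref{Lem:SecPaths}(ii) to identify the tail $p''$ of $p$ with $q_1$ up to a non-zero scalar — and it correctly rules out the epi alternative because a composite epi has epi right factor, which would force the mono $q_1$ to be an isomorphism between distinct vertices of $\Gamma_\QQ$. This is exactly the intended fill-in.
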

\subsubsection*{The function $\delta$} For every $M,N\in R_\mathbf{d}$ we consider the function $\delta_{M,N}:\Rep(\QQ)\rightarrow \ZZ$ defined as
$\delta_{M,N}(E)=[N,E]-[M,E]$.
This is a well-known additive function with many useful properties (cf. \cite{Zwara}). One of them is that if $E\in\ind(\QQ)$ is non-projective and $F$
is the middle term of the almost split sequence ending in $E$, then
\begin{equation}\label{Eq:DeltaAlmostSplit}
\delta_{M,N}(E)-\delta_{M,N}(F)+\delta_{M,N}(\tau E)=\mu(N,E)-\mu(M,E)
\end{equation}
where $\mu(N,E)$ (resp. $\mu(M,E)$) denotes the multiplicity of $E$ as a direct summand of $N$ (resp. $M$). This follows directly from the almost split properties.
\begin{lem}\label{Lem:DeltaSelfDual}
Let $M,N\in R_\mathbf{d}^\ee$ be two $\ee$-representations. Then
\begin{equation}
\delta_{M,N}(E)=\delta_{M,N}(\tau\nabla E)=\delta_{M,N}(\nabla\tau^- E)
\end{equation}
for every $E\in\ind(\QQ)$.
\end{lem}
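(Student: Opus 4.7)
My plan is to combine three ingredients: (i) the self-duality property $M\simeq \nabla M$ for an $\ee$-representation, (ii) the fact that $\QQ$ is hereditary and $M,N$ share the same dimension vector, and (iii) the Auslander--Reiten formula together with the identity $\nabla\tau=\tau^-\nabla$ recalled just above the lemma.

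First I would observe that, since $\nabla:\Rep(\QQ)\to\Rep(\QQ)$ is a contravariant self-equivalence, one has $\dim\Hom(X,Y)=\dim\Hom(\nabla Y,\nabla X)$ for any $X,Y$. Applying this with $Y=E$ and using that $M,N$ are $\ee$-representations (so $M\simeq\nabla M$ and $N\simeq \nabla N$ as $\A$-representations by Theorem~\ref{thm:isoclasses} and the definition of $\ee$-representation), I get
\begin{equation*}
[M,E]=[\nabla E,\nabla M]=[\nabla E,M],\qquad [N,E]=[\nabla E,N].
\end{equation*}
Hence $\delta_{M,N}(E)=[\nabla E,N]-[\nabla E,M]$.

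Next, because $\dimv M=\dimv N=\mathbf{d}$ and $\A$ is hereditary, for any $X\in\Rep(\QQ)$ the Ringel--Euler identity \eqref{Eq:HomologicalInterpretationEulerForm} gives
\begin{equation*}
[X,M]-[X,M]^1=\langle\dimv X,\mathbf{d}\rangle_{\QQ}=[X,N]-[X,N]^1,
\end{equation*}
so $[X,N]-[X,M]=[X,N]^1-[X,M]^1$. Taking $X=\nabla E$ and combining with the previous step, I obtain
\begin{equation*}
\delta_{M,N}(E)=[\nabla E,N]^1-[\nabla E,M]^1.
\end{equation*}
The Auslander--Reiten formula \eqref{Eq:ARFormula} then yields $[\nabla E,X]^1=[X,\tau\nabla E]$ for every $X$, which transforms the right-hand side into $[N,\tau\nabla E]-[M,\tau\nabla E]=\delta_{M,N}(\tau\nabla E)$, proving the first equality.

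For the second equality, I would simply apply the functorial identity $\nabla\tau=\tau^-\nabla$ (equivalently $\tau\nabla=\nabla\tau^-$), which holds on $\ind(\QQ)$ by \cite[Prop.~3.4]{DW} recalled above, to conclude $\tau\nabla E\simeq \nabla\tau^- E$, whence $\delta_{M,N}(\tau\nabla E)=\delta_{M,N}(\nabla\tau^- E)$. The only mild subtlety is the boundary case where $E$ is projective (so $\tau\nabla E$ may be zero) or $E$ is injective (so $\nabla\tau^- E$ may be zero); these cases force both sides to vanish by the same duality argument, so no genuine obstacle arises.
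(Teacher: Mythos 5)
Your proof is correct and uses exactly the same three ingredients as the paper's: the Euler--Ringel form with $\dimv M=\dimv N$, the Auslander--Reiten formula, and the duality $\nabla$ together with $\nabla\tau=\tau^-\nabla$; you simply apply $\nabla$ first (turning $[M,E]$ into $[\nabla E,M]$) and then the Euler form and AR formula, whereas the paper runs the Euler and AR steps first on $[N,E]-[M,E]$ and invokes $\nabla$ afterward. One small inaccuracy: the degenerate case is when $E$ is \emph{injective} (then $\nabla E$ is projective, so both $\tau\nabla E$ and $\nabla\tau^-E$ vanish); if $E$ is projective, $\nabla E$ is injective and $\tau\nabla E$ is in general nonzero, so this is not a boundary case — but this does not affect the validity of your argument.
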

\begin{proof}
We compute
\begin{eqnarray*}
\hspace{3.2cm}
\delta_{M,N}(E)&=&[N,E]-[M,E]\\
&=&[N,E]^1-[M,E]^1\\
&=&[\tau^- E, N]-[\tau^- E,M]\\
&=&[N,\nabla \tau^- E]-[M,\nabla \tau^- E]\\
&=&[N,\tau\nabla E]-[M,\tau\nabla E].
\hspace{3cm}
\qedhere
\end{eqnarray*}
\end{proof}
\noindent
\begin{definition}
We say that $F\in\Rep(\QQ)$ is \emph{$\delta$-fixed} if $F\simeq \tau\nabla F$.
\end{definition}
Clearly, $F$ is $\delta$-fixed if and only if all its indecomposable direct summands are. 
\begin{example}\label{Ex:Delta-fixed}
For the quiver $\QQ$ considered in Example~\ref{Ex:ARQuiverA3}, $P_2$ is the only  $\delta$-fixed indecomposable $\QQ$-representation. 
In Example \ref{Ex:ARQuiverAEvenAltern} ~the $\delta$-fixed indecomposables  are $U_{4,3}$ and $U_{1,3}$ and in Example~\ref{Ex:ARQuiverAOddAltern} ~they are $U_{3,5}$ and $U_{1,4}$.
\end{example}

\begin{lem}\label{Lem:DeltaFixedNablaInvariants}
Let $F\rightarrow G$ be an arrow of $\Gamma_\QQ$. Then
$F$ is $\delta$-fixed if and only if $G$ is $\nabla$-invariant.
\end{lem}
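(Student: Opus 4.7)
The plan is to reduce the equivalence to a combinatorial identity involving the integer $k(L)$ attached to each $L\in\ind(\QQ)$ by $\nabla L\simeq\tau^{k(L)}L$ (whose existence is recalled just above the lemma). In these terms, $L$ is $\nabla$-invariant iff $k(L)=0$, while $L$ is $\delta$-fixed iff $L\simeq\tau\nabla L\simeq\tau^{k(L)+1}L$, iff $k(L)=-1$. So the lemma reduces to the following claim: for every arrow $F\to G$ of $\Gamma_\QQ$ one has $k(G)=k(F)+1$.

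To prove this claim I would exploit the standard $\ZZ$-grading on $\Gamma_\QQ$. Since $\QQ$ is Dynkin of type $A$, its AR-quiver is a connected full subquiver of the universal translation quiver $\ZZ A_n$ and thus carries a function $\mathrm{col}\colon\ind(\QQ)\to\ZZ$ with $\mathrm{col}(Y)=\mathrm{col}(X)+1$ for every arrow $X\to Y$ and $\mathrm{col}(\tau X)=\mathrm{col}(X)-2$ whenever $\tau X$ is defined. Since $\nabla$ reverses arrows, applying it to an arbitrary arrow $X\to Y$ gives $\mathrm{col}(\nabla X)-\mathrm{col}(\nabla Y)=\mathrm{col}(Y)-\mathrm{col}(X)$, so the quantity $\mathrm{col}(X)+\mathrm{col}(\nabla X)$ is locally constant on $\Gamma_\QQ$; by connectedness, call this constant $2c_0$ (the algebraic incarnation of the ``vertical middle line'' of $\nabla$-fixed points remarked on in the paper). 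Combining $\mathrm{col}(\nabla X)=2c_0-\mathrm{col}(X)$ with $\mathrm{col}(\nabla X)=\mathrm{col}(\tau^{k(X)}X)=\mathrm{col}(X)-2k(X)$ yields $k(X)=\mathrm{col}(X)-c_0$. Hence $X$ is $\nabla$-invariant iff $\mathrm{col}(X)=c_0$ and $X$ is $\delta$-fixed iff $\mathrm{col}(X)=c_0-1$; since an arrow $F\to G$ forces $\mathrm{col}(G)=\mathrm{col}(F)+1$, we conclude $G$ is $\nabla$-invariant iff $F$ is $\delta$-fixed, as desired.

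The only subtle point, and the main obstacle, is ensuring that the column function and the $\nabla$-reflection property hold globally on $\Gamma_\QQ$, including at vertices where $\tau X$ or $\tau^{-}X$ fails to be defined. This is handled cleanly by realising $\Gamma_\QQ$ as a connected subquiver of $\ZZ A_n$ and pulling back the canonical column grading from there; the identity $\nabla\tau=\tau^{-}\nabla$ then guarantees that the $\nabla$-reflection extends across boundary vertices without modification. An alternative route, should one wish to avoid invoking $\ZZ A_n$ explicitly, is to propagate the identity $k(G)=k(F)+1$ along paths in $\Gamma_\QQ$ starting from a chosen $\nabla$-invariant vertex, using the mesh relation together with $\nabla\tau=\tau^{-}\nabla$ to push the computation one arrow at a time.
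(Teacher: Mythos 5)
Your proof is correct, and it takes a genuinely different route from the paper's. The paper argues directly with almost split sequences: if $F$ is $\delta$-fixed it applies $\nabla$ to the almost split sequence starting at $F$, notes that $\nabla$ of an almost split sequence is again an almost split sequence, and uses uniqueness to see that the middle term is $\nabla$-invariant; the converse is argued symmetrically by applying $\tau$. Your proof instead passes through the integer $k(L)$ with $\nabla L\simeq\tau^{k(L)}L$, translates ``$\nabla$-invariant'' to $k=0$ and ``$\delta$-fixed'' to $k=-1$, and then establishes the arrow identity $k(G)=k(F)+1$ via the column grading pulled back from $\ZZ A_n$. Both arguments are sound; the key facts each needs ($\nabla\tau=\tau^-\nabla$, the arrow-reversing involution of $\Gamma_\QQ$, the existence of $k(L)$) are all supplied in Section~\ref{Sec:SymmTypeA}. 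Your approach buys the explicit formula $k(L)=\mathrm{col}(L)-c_0$, which makes transparent that the $\nabla$-invariants and the $\delta$-fixed indecomposables each occupy a single column of $\Gamma_\QQ$, and it sidesteps the small subtlety in the paper's forward direction of showing that every indecomposable direct summand of a $\nabla$-invariant middle term is itself $\nabla$-invariant. The cost is the reliance on the realisation $\Gamma_\QQ\hookrightarrow\ZZ A_n$ and a global column function, an external (though standard) ingredient; the paper's homological argument is more self-contained in that respect.
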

\begin{proof}
Suppose that $F$ is $\delta$-fixed. If $F$ is injective, then $\nabla F$ is projective and hence $F$ is, thus, not injective. Let $0\rightarrow F\rightarrow G'\rightarrow \tau^- F\rightarrow 0$ be the almost split sequence starting from $F$. Then $\tau^- F\simeq\nabla F$ and hence $G'$ is $\nabla$-invariant. Since $G$ is an indecomposable direct summand of $G'$, it is $\nabla$-invariant, too. Viceversa, suppose that $G$ is $\nabla$-invariant. Then $G$ is not projective. Let $0\rightarrow \tau G\rightarrow F'\rightarrow G\rightarrow 0$ be the almost split sequence ending in $G$. Then $0\rightarrow \nabla G\rightarrow \nabla F'\rightarrow \nabla \tau G\rightarrow 0$ is the almost split sequence starting in $\nabla G=G$. Since $\tau \nabla G=\tau G$ and $\tau \nabla\tau G=G$, necessarily, $\tau \nabla F'=F'$ and $F'$ is $\delta$-fixed. Since $F$ is a direct summand of $F'$, $F$ is $\delta$-fixed, too.
\end{proof}
\begin{lem}\label{Lem:ExistenceNablaInvariantQuasiSimple}
There exists a $\nabla$-invariant representation $G\in\ind(\QQ)$, such that there is only one arrow in $\Gamma_{\QQ}$ ending in $G$.
\end{lem}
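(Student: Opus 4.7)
The plan is to exhibit an explicit $\nabla$-invariant $G$ and to verify the arrow-count condition, splitting by the parity of $n$.

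For $n=2k+1$ odd, I take $G=S_{v}$ where $v=k+1$ is the unique $\sigma$-fixed vertex; clearly $\nabla S_v=S_v$. By compatibility of $\sigma$ with the orientation, the two edges adjacent to $v$ are swapped by $\sigma$, which together with $\sigma:\QQ\to\QQ^{op}$ forces one of them to be incoming to $v$ and the other outgoing: in particular $v$ is neither source nor sink of $\QQ$ (for $n\geq 3$), so $S_v$ is non-projective. By Lemma~\ref{Lem:MiddleTerm}(ii), it is enough to show that the middle term $X$ of the almost split sequence ending in $S_v$ is indecomposable. Any indecomposable summand $X_i$ of $X$ admits a non-zero morphism to the simple $S_v$, which is necessarily surjective, so $S_v$ is a quotient of $X_i$; this forces $v$ to be a source of the support-interval of $X_i$ which, combined with the orientation at $v$, gives $X_i=U_{v,b_i}$ (or the mirror $U_{b_i,v}$ depending on orientation; the argument is symmetric). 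If $X$ decomposed as $U_{v,b_1}\oplus U_{v,b_2}$ with $b_1\leq b_2$, then $\dimv\tau G=\dimv X-\dimv G$ would give coordinate $2$ at every $j\in(v,b_1]$; since $\tau G$ is indecomposable, hence thin, this forces $(v,b_1]=\emptyset$, i.e., $b_1=v$, i.e., $X_1=S_v=G$. But $G$ cannot appear as a direct summand of the middle term of its own almost split sequence (otherwise the sequence would split). Hence $X$ is indecomposable.

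For $n=2k$ even the situation is more delicate, since the natural candidate $G=U_{k,k+1}$ (supported on the $\sigma$-fixed edge $e_k$) has simple top but the middle term of its AR-sequence may a priori have summands with two different tops---one giving a surjection onto $G$ and one giving an inclusion into $\socle(G)\hookrightarrow G$. The plan here is to split by orientation: if a $\nabla$-invariant projective-injective module $G'=P_j=I_{\sigma(j)}$ with indecomposable radical exists (as occurs for $G'=P_1$ in an equioriented setting), take $G:=G'$; the number of arrows ending in $G'$ equals the number of summands of $\rad G'$, which is one. Otherwise, take $G=U_{k,k+1}$ and verify, via case analysis on the orientations of the edges $e_{k-1}$ and $e_{k+1}$ (linked via $\sigma$), that $[\tau G,G]=0$---showing the excluded summand type cannot occur---and conclude by Lemma~\ref{Lem:MiddleTerm}(ii) as before.

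The main obstacle is the even case: summands of the AR-middle term are no longer constrained to share the same top, so the dimension-vector argument from the odd case does not apply directly, and a careful case analysis on the orientations of the two edges adjacent to the central edge $e_k$ is required.
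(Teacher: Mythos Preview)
Your odd case is correct and is a genuinely different (and rather clean) choice from the paper's: you take the simple $S_v$ at the fixed vertex, while the paper uniformly takes $G=U_{\omega,\sigma(\omega)}$ where $\omega$ is the unique source of $\QQ$ from which the path to $\sigma(\omega)$ is equioriented. Your dimension-vector argument ruling out two summands is fine (if $X_1=S_v$ then $\dimv\tau G=\dimv X_2$, so $\tau G\simeq X_2$ and the sequence would split).

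The even case, however, is not a proof but a plan. Two concrete gaps:
\begin{itemize}
\item You never establish that your dichotomy is exhaustive, i.e.\ that whenever no $\nabla$-invariant projective--injective with indecomposable radical exists, the module $U_{k,k+1}$ does the job. These two conditions are not obviously complementary.
\item In the second branch you only say ``verify, via case analysis on the orientations of $e_{k-1}$ and $e_{k+1}$, that $[\tau G,G]=0$'' and then restate this as ``a careful case analysis \dots\ is required''. That case analysis is the entire content of the even case and you have not carried it out.
\end{itemize}

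The paper avoids the parity split altogether. Since $\sigma$ sends sources to sinks, there is a unique source $\omega$ whose maximal equioriented segment towards the middle reaches $\sigma(\omega)$; set $G=U_{\omega,\sigma(\omega)}$. This $G$ has simple socle $S_{\sigma(\omega)}$, so if $G$ is projective its radical is indecomposable. If $G$ is not projective, the extra arrow $\omega\to j$ outside the segment lets one write down $\tau G$ explicitly as an interval module whose support is disjoint from that of $G$, giving $[\tau G,G]=0$ in one line and hence indecomposable middle term by Lemma~\ref{Lem:MiddleTerm}. This is shorter than any parity-based argument and you should adopt it for the even case (it also subsumes your odd case).
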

\begin{proof}
Let $\omega$ be the unique source of $\QQ$ such that the path $\xymatrix@1{\omega\ar@{..>}[r]&\sigma(\omega)}$ is equioriented. We claim that  $G=U_{\omega,\sigma(\omega)}$ has the required property. Indeed, if $G$ is projective then its radical is indecomposable, since $G$ has a simple socle. If $G$ is not projective there exists an arrow $\omega\rightarrow j$ of $\QQ$ which does not belong to the path $\xymatrix@1{\omega\ar@{..>}[r]&\sigma(\omega)}$. 
Let $k\neq \omega$ be the source of $\QQ$ such that there exists an equioriented path $\xymatrix@1{k\ar@{..>}[r]&j}$  (if there is no such path, i.e. if $j$ is not a sink, 
put $k=j$). Then $\tau G=U_{k,j}$. Since $[U_{k,j},G]=0$ it follows from Lemma~\ref{Lem:MiddleTerm} that the middle term of the almost split sequence ending in $G$ is indecomposable (namely, it is $U_{k,\sigma(\omega)}$). 
\end{proof}

\begin{prop}\label{prop:DeltaEven}
Let $(\QQ,\ee)$ be of split type and let $M,N\in R_\mathbf{d}^{\ee}$. Then 
\begin{equation}\label{Eq:DeltaFEven}
\delta_{M,N}(F)\textrm{ is even for every }\delta\textrm{-fixed }F.
\end{equation}
\end{prop}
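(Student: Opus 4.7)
My plan is to reduce the statement to a purely combinatorial identity about dimension vectors. Write $M\simeq X\oplus\nabla X$ and $N\simeq Y\oplus\nabla Y$ as $\QQ$-representations, which is possible since in split type every indecomposable $\ee$-representation is of type $(S)$ or $(R)$. A nontrivial $\delta$-fixed $F$ satisfies $F\simeq\tau\nabla F$; in particular $F$ is neither projective nor injective and $\nabla F\simeq\tau^{-}F$. Using $\Hom(\nabla X,F)\simeq\Hom(\nabla F,X)$ (as $\nabla$ is a self-duality) and the Auslander-Reiten formula~\eqref{Eq:ARFormula} in the form $[\tau^{-}F,X]=[X,F]^1$, I obtain
\[
[M,F]=[X,F]+[X,F]^1\equiv\langle\dimv X,\dimv F\rangle_\QQ\pmod 2
\]
from the Euler identity~\eqref{Eq:HomologicalInterpretationEulerForm}. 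The same argument applies to $N$, so subtracting yields $\delta_{M,N}(F)\equiv\langle e,\dimv F\rangle_\QQ\pmod 2$ with $e:=\dimv Y-\dimv X$. The equality $\dimv M=\dimv N$ together with $\dimv\nabla A=\sigma(\dimv A)$ forces $\sigma(e)=-e$, so the claim reduces to showing that $\langle e,\dimv F\rangle_\QQ$ is even for every $\sigma$-antisymmetric $e$ and every $\delta$-fixed $F$.

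To verify this I plan to expand the Ringel form modulo $2$. Setting $h_i:=f_i+\sum_{\alpha:i\rightarrow j}f_j$ with $f=\dimv F$, one has $\langle e,f\rangle_\QQ\equiv\sum_i e_ih_i\pmod 2$, and the standard projective resolution of $S_i$ together with~\eqref{Eq:KthEntryDimVectEulerForm} gives $h_i\equiv\langle\dimv S_i,f\rangle_\QQ\pmod 2$. Since a $\sigma$-antisymmetric $e$ vanishes at $\sigma$-fixed vertices and satisfies $e_{\sigma(i)}=-e_i$ on free orbits, grouping by $\sigma$-orbits reduces the problem to the congruence
\[
\langle\dimv S_i,\dimv F\rangle_\QQ\equiv\langle\dimv S_{\sigma(i)},\dimv F\rangle_\QQ\pmod 2
\]
for every vertex $i$ and every $\delta$-fixed $F$.

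The main point, which I expect to be the key calculation, is precisely this last identity. Since $\nabla$ is a self-duality of $\Rep(\QQ)$, the Euler form satisfies $\langle\dimv A,\dimv B\rangle_\QQ=\langle\dimv\nabla B,\dimv\nabla A\rangle_\QQ$; applied to $A=\nabla S_i$ and $B=F$ this gives $\langle\dimv S_{\sigma(i)},\dimv F\rangle_\QQ=\langle\dimv\nabla F,\dimv S_i\rangle_\QQ$. Using $\nabla F\simeq\tau^{-}F$ and $\tau\tau^{-}F\simeq F$ (valid since $F$ is neither projective nor injective), the Auslander-Reiten formulas convert
\[
[\nabla F,S_i]=[S_i,F]^1\qquad\text{and}\qquad[\nabla F,S_i]^1=[S_i,F],
\]
so $\langle\dimv\nabla F,\dimv S_i\rangle_\QQ=-\langle\dimv S_i,\dimv F\rangle_\QQ$, which has the same parity as $\langle\dimv S_i,\dimv F\rangle_\QQ$, as desired. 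The main obstacle is simply tracking the Auslander-Reiten identities and the self-duality identity carefully and in the right order; no boundary cases need to be treated separately because nontrivial $\delta$-fixed representations are automatically non-projective and non-injective.
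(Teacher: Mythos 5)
Your proof is correct and takes a genuinely different route from the paper's. The paper's argument is a structural parity-propagation inside the Auslander--Reiten quiver: it uses the almost split sequence $0\to\tau G\to F\to G\to 0$ ending in a $\nabla$-invariant $G$, combines the additivity identity \eqref{Eq:DeltaAlmostSplit} with Lemma~\ref{Lem:DeltaSelfDual} to obtain $\delta_{M,N}(F)=2\delta_{M,N}(G)-\mu(N,G)+\mu(M,G)$, invokes the split-type hypothesis to see that $\mu(M,G)$ and $\mu(N,G)$ are even, and then uses Lemma~\ref{Lem:ExistenceNablaInvariantQuasiSimple} to anchor a step-by-step parity comparison across the chain $F_1,\dots,F_k$ of $\delta$-fixed indecomposables. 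You instead feed the split-type hypothesis in exactly once, via $M\simeq X\oplus\nabla X$, and reduce the entire claim to the bilinear-form identity $\langle\dimv S_{\sigma(i)},\dimv F\rangle_\QQ=-\langle\dimv S_i,\dimv F\rangle_\QQ$, which you establish from the Auslander--Reiten formulas and the $\nabla$-symmetry of the Euler form. Your route is shorter, closed-form rather than inductive, and needs fewer preparatory facts (only \eqref{Eq:ARFormula}, \eqref{Eq:HomologicalInterpretationEulerForm}, and $\nabla\tau=\tau^-\nabla$); the paper's version in exchange exhibits the ordering of the $\delta$-fixed and $\nabla$-invariant vertices along the middle of $\Gamma_\QQ$, a structure it reuses elsewhere. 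One small inaccuracy in your write-up: you assert that a nontrivial $\delta$-fixed $F$ is automatically non-projective, but this is false --- $P_2$ is $\delta$-fixed and projective in Example~\ref{Ex:Delta-fixed}. Fortunately your argument never actually uses non-projectivity of $F$: the identity $\tau\tau^-F\simeq F$ that you invoke only needs $\tau^-F\simeq\nabla F$ to have no nonzero projective summand, equivalently that $F$ have no nonzero injective summand, and that does follow from $F\simeq\tau\nabla F$.
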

\begin{proof}
We prove the statement for every indecomposable $\delta$-fixed representation.
Let $G\in\ind(\QQ)$ be $\nabla$-invariant and let $0\rightarrow \tau G\rightarrow F\rightarrow G\rightarrow 0$ be the almost split sequence ending in $G$.  By Lemma~\ref{Lem:DeltaSelfDual}, $\delta_{M,N}(G)=\delta_{M,N}(\tau G)$ and by \eqref{Eq:DeltaAlmostSplit} we get
\begin{equation}\label{Eq:DeltaFixedEven}
\delta_{M,N}(F)=2\delta_{M,N}(G)-\mu(N,G)+\mu(M,G).
\end{equation}
Since $M$ and $N$ do not have $\ee$-indecomposable direct summands of type $(I)$ and $G$ is $\nabla$-invariant, its multiplicity both in $M$ and in $N$ is even. We hence see that $\delta_{M,N}(F)$ is even. Let $F_1,\cdots, F_k\in\ind(\QQ)$ be the $\delta$-invariant isoclasses, ordered from top to bottom in $\Gamma_\QQ$. Then $F_i\oplus F_{i+1}$ is the middle term of an almost split sequence ending in a $\nabla$-invariant indecomposable representation, and by \eqref{Eq:DeltaFixedEven}, $\delta_{M,N}(F_i)$ and $\delta_{M,N}(F_{i+1})$ have the same parity, for all $i=1,\cdots, k-1$. Thus, 
 $\delta_{M,N}(F_1),\cdots, \delta_{M,N}(F_k)$ have all the same parity. To conclude the proof it is enough to show that one of them is even. By Lemma~\ref{Lem:ExistenceNablaInvariantQuasiSimple}, either $F_1$ or $F_k$ is the middle term of an almost split sequence ending in an indecomposable $\nabla$-invariant representation, and hence, by \eqref{Eq:DeltaFixedEven}, either $\delta_{M,N}(F_1)$ or $\delta_{M,N}(F_k)$ is even. 
\end{proof}

\subsection{Generic isotropic embeddings}\label{Sec:KeyLemma}
We state and prove a surprising result that says that under certain mild hypotheses, it is always possible to embed  an indecomposable inside an $\ee$--representation so that the embedding is isotropic and generic.  In the split types this is obvious since every embedding is isotropic: 
\begin{lem}\label{Lem:IsoSplitTypes}
Let $(\QQ, \ee)$ be of split type. Let $M\in R(\A,V)^{\Psi,\ee}$ and $L\in\ind(\QQ)$. Then every embedding of $L$ into $M$ is isotropic.
\end{lem}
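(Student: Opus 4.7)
The plan is to reduce the statement to two already-established facts: Corollary~\ref{Cor:Isotropic}, which characterises isotropy of the image of a homomorphism $f$ into an $\ee$-representation as the vanishing of $\nabla f\circ \Psi\circ f$, and Lemma~\ref{Lem:ImageNableInvMorphism}, which says that $\Hom(L,\nabla L)^{\ee\nabla}=0$ whenever $(\QQ,\ee)$ is of split type and $L\in\ind(\QQ)$. So the whole argument should consist of checking that the obstruction $\nabla\iota\circ\Psi\circ\iota:L\to\nabla L$ always lives in this eigenspace; the hypothesis on the type then kills it for free.

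Concretely, given an embedding $\iota:L\hookrightarrow M$ with $M\in R(\A,V)^{\Psi,\ee}$, I would form the composition $\theta:=\nabla\iota\circ\Psi\circ\iota\in\Hom(L,\nabla L)$ and compute its image under $\nabla$. Using that $\nabla$ is contravariant with $\nabla^2=\mathrm{id}$ (canonically, via the evaluation pairing) and the relation $\nabla\Psi=\ee\Psi$ from \eqref{Eq:Psi}, one finds
\[
\nabla\theta=\nabla(\nabla\iota\circ\Psi\circ\iota)=\nabla\iota\circ\nabla\Psi\circ\nabla(\nabla\iota)=\ee\,\nabla\iota\circ\Psi\circ\iota=\ee\,\theta,
\]
so that $\theta\in\Hom(L,\nabla L)^{\ee\nabla}$. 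Since $L$ is indecomposable and $(\QQ,\ee)$ is of split type, Lemma~\ref{Lem:ImageNableInvMorphism} gives $\Hom(L,\nabla L)^{\ee\nabla}=0$ (in type $A_{odd}$ the whole Hom-space is $\nabla$-invariant so its $(-\nabla)$-eigenspace vanishes, and dually for $A_{even}$), forcing $\theta=0$. Corollary~\ref{Cor:Isotropic} then yields that $\iota(L)$ is isotropic.

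There is no genuine obstacle here: the symmetry computation above is the only content, and it is essentially a one-line check on eigenspaces. The actual work was done in Lemma~\ref{Lem:ImageNableInvMorphism}, whose proof relied on the fact that indecomposables over $A_n$ are thin and that a symplectic space must be even-dimensional, ruling out $\ee$-indecomposable summands of type $(I)$ in the split types.
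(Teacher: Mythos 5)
Your proposal is correct and follows essentially the same route as the paper's proof: identify the obstruction $\nabla\iota\circ\Psi\circ\iota$ as an element of $\Hom(L,\nabla L)^{\ee\nabla}$, invoke Lemma~\ref{Lem:ImageNableInvMorphism} to conclude it vanishes in the split types, and then apply Corollary~\ref{Cor:Isotropic}. The only difference is that you spell out the eigenspace computation $\nabla\theta=\ee\theta$, which the paper asserts without writing it down; this is a helpful expansion but not a different argument.
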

\begin{proof}
Let $\iota:L\hookrightarrow M$ be a monomorphism. Then $\nabla\iota\circ\Psi\circ\iota\in\Hom(L,\nabla L)^{\ee\nabla}$ is zero by Lemma~\ref{Lem:ImageNableInvMorphism}. The statement is now a consequence of Corollary~\ref{Cor:Isotropic}. 
\end{proof}
In the non-split types the statement of Lemma~\ref{Lem:IsoSplitTypes} is not true (see Examples~\ref{Ex:NonIsoEmbedding1} and \ref{Ex:NonIsoEmbedding2}) and one needs to add extra hypotheses on $L$ and $M$.
\begin{prop}\label{Prop:KeyLemma}
Let $(\QQ, \ee)$ be of non-split type. Let $M\in R(\A,V)^{\Psi,\ee}$ be an $\ee$--representation and let $L\in\ind(\QQ)$ be an indecomposable subrepresentation of $M$. Let $\iota:L\hookrightarrow M$ be a generic embedding with (generic) quotient $Q$. Assume that:
\begin{eqnarray}
\label{Eqn:Hypt2KeyLemma}&& \textrm{There is a surjective homomorphism }\xymatrix@1{Q\ar@{->>}[r]&\nabla L};\\
\label{Eqn:Hypt3KeyLemma}&& [L,K]^1=0\textrm{ where }K\textrm{ is the generic kernel of }\xymatrix@1{Q\ar@{->>}[r]&\nabla L}.
\end{eqnarray}
Then there exists an automorphism $g$ of $M$ such that $j=g\circ \iota$ is an isotropic embedding of $L$ into $M$. In particular, $j$ is an isotropic and generic embedding.
\end{prop}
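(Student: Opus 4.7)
The strategy is to trade the problem of modifying the embedding $\iota$ by an automorphism of $M$ for the equivalent problem of modifying the $\ee$-form $\Psi$ so as to make $\iota(L)$ isotropic, and then transport the result back via the $\Aut(M)$-action. The proof of Theorem~\ref{thm:isoclasses} shows that $\Aut(M)$ acts transitively on the open set $\Hom^0(M,\nabla M)^{\ee\nabla}$ of invertible $\ee$-forms on the underlying vector space of $M$. Hence producing $g\in\Aut(M)$ with $\nabla(g\iota)\circ\Psi\circ(g\iota)=0$ is equivalent to producing some $\Psi'\in\Hom^0(M,\nabla M)^{\ee\nabla}$ with $\nabla\iota\circ\Psi'\circ\iota=0$: one picks $g$ with $\nabla g\circ\Psi\circ g=\Psi'$ and sets $j=g\circ\iota$.

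Set $\theta:=\nabla\iota\circ\Psi\circ\iota\in\Hom(L,\nabla L)^{\ee\nabla}$. If $\theta=0$, take $g=\mathrm{id}$. Otherwise, by Lemma~\ref{Lem:ImageNableInvMorphism} we have $\Hom(L,\nabla L)^{\ee\nabla}=\Hom(L,\nabla L)$ in the non-split types, and this space is at most one-dimensional by Lemma~\ref{Lemma:OneDimensional}, so $\theta$ spans it. The linear restriction map
$$r:\Hom(M,\nabla M)^{\ee\nabla}\to\Hom(L,\nabla L),\qquad \xi\mapsto\nabla\iota\circ\xi\circ\iota$$
sends $\Psi$ to $\theta\ne 0$, hence $r$ is surjective with hyperplane kernel. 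The task therefore reduces to exhibiting a single invertible element in $\ker r$.

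To produce such an element, I would proceed as follows. Fix the surjection $\pi:Q\twoheadrightarrow\nabla L$ with generic kernel $K$, and let $\phi:M\twoheadrightarrow\nabla L$ be its pull-back to $M$, so that $\phi\iota=0$, $M'':=\ker\phi\supset\iota(L)$, and $M''/\iota(L)\simeq K$. The target $\nabla M$ sits in the $\nabla$-dual short exact sequence $0\to\nabla Q\to\nabla M\xrightarrow{\nabla\iota}\nabla L\to 0$. The candidate $\Psi'$ is built as a morphism between these two exact sequences:
$$
\xymatrix@R=12pt{
0\ar[r]&M''\ar[r]\ar[d]_\beta &M\ar[r]^\phi\ar[d]_{\Psi'}&\nabla L\ar[r]\ar[d]^{\mathrm{id}}&0\\
0\ar[r]&\nabla Q\ar[r]&\nabla M\ar[r]^{\nabla\iota}&\nabla L\ar[r]&0
}
$$
with $\Psi'\in\Hom(M,\nabla M)^{\ee\nabla}$ and $\beta:M''\to\nabla Q$ suitably chosen. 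Any $\Psi'$ fitting in such a square lies automatically in $\ker r$ since $\nabla\iota\circ\Psi'\circ\iota=\phi\circ\iota=0$. Existence of the diagram together with $\ee\nabla$-invariance of $\Psi'$ translates into compatibility of the Yoneda classes of the two extensions under $\beta_*$; the hypothesis $[L,K]^1=0$ is what forces the relevant obstruction to vanish, via the long exact $\Ext$-sequences coming from $0\to L\to M''\to K\to 0$ and its $\nabla$-dual, combined with the AR-duality $\nabla\tau=\tau^{-}\nabla$.

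The main obstacle is this last step: constructing $\beta$ and showing that some such $\Psi'$ is an isomorphism. Invertibility is an open condition on the affine space of lifts, so it suffices to produce one $\Psi'$ whose induced maps on the successive pieces of the filtration $\iota(L)\subset M''\subset M$ are all invertible; the extreme pieces are forced to be the identity on $\nabla L$ and, dually, on $L$, while the middle piece $\beta$ can be chosen invertible thanks to the Ext-vanishing. Once such a $\Psi'$ exists, the corresponding $g\in\Aut(M)$ gives the isotropic embedding $j=g\circ\iota$; and since $g$ is an automorphism of $M$, one has $M/j(L)\simeq M/\iota(L)=Q$, so $j$ is also a generic embedding.
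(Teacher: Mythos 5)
Your opening reduction is correct and is a clean way to repackage what the paper's Theorem~\ref{thm:isoclasses} and Corollary~\ref{cor:NoMatterForm} already give you: since $\Aut(M)$ acts transitively on the invertible $\ee$-forms $\Hom^0(M,\nabla M)^{\ee\nabla}$, finding an automorphism $g$ with $j=g\circ\iota$ isotropic is indeed equivalent to finding an invertible $\Psi'\in\ker r$ where $r(\xi)=\nabla\iota\circ\xi\circ\iota$. You also correctly observe that $\ker r$ is a hyperplane (using Lemmas~\ref{Lemma:OneDimensional} and~\ref{Lem:ImageNableInvMorphism}). But this is where the proposal stops being a proof: a linear hyperplane of $\Hom(M,\nabla M)^{\ee\nabla}$ need not meet the open set of invertible elements, and deciding whether it does is precisely the content of the proposition. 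The hypotheses \eqref{Eqn:Hypt2KeyLemma} and \eqref{Eqn:Hypt3KeyLemma} are there exactly to rule out the situation where $\ker r$ consists entirely of non-invertible forms, and you never actually show that they do so.

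Concretely, the diagram you set up, with $\beta:M''\to\nabla Q$ between the two short exact sequences, is a plausible place to look for $\Psi'$, but you do not (i) construct $\beta$, (ii) verify the resulting $\Psi'$ is $\ee\nabla$-invariant rather than merely a chain map, or (iii) show $\beta$ can be taken invertible. Invoking ``the Ext-vanishing $[L,K]^1=0$ clears the obstruction via long exact sequences combined with AR-duality'' is the entire mathematical content of the statement, and you flag it yourself as ``the main obstacle.'' The paper's proof instead decomposes $M=S\oplus T^0\oplus\nabla T^0\oplus\overline{M}$, writes the generic embedding component by component, and reduces to counting how many components $\nabla\iota_k\circ\Psi_k\circ\iota_k$ are non-zero; in all but one sub-case an explicit rescaling produces the desired isotropic embedding, and in the remaining sub-case ($h=1$ with $S$ indecomposable and $[L,\nabla T^0]=0$) hypotheses \eqref{Eqn:Hypt2KeyLemma}--\eqref{Eqn:Hypt3KeyLemma} are shown to be violated. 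So if you want to pursue your more homological route you need to actually carry out the construction of an invertible $\beta$ and show where the hypothesis $[L,K]^1=0$ enters; as written, the argument has a gap at its crux.
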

\begin{proof}
Since $L$ is indecomposable,  we have $[L,\nabla L]\leq 1$ by Lemma~\ref{Lemma:OneDimensional}. 
\\[1ex]
If $[L,\nabla L]=0$ then every embedding $\iota:L\hookrightarrow M$ is isotropic by Corollary~\ref{Cor:Isotropic}.
\\[1ex]
Let us assume that $[L,\nabla L]=1$.
By Lemma \ref{Lem:ImageNableInvMorphism} $\Hom(L,\nabla L)=\Hom(L,\nabla L)^{\ee\nabla}=\mathrm{k} \varphi$ for a non-zero homomorphism $\varphi=\ee\nabla\varphi :L\rightarrow \nabla L$. We need to prove that there exists an automorphism $g$ of $M$ such that $j:=g\circ \iota$ is isotropic, i.e. $\nabla j\circ\Psi\circ j=0$.
\\[1ex]
Let $Z$ be an indecomposable direct summand of $M$ which is $\preceq$-minimal in the support of $\Hom(L,-)$. We say that $Z$ is a \emph{split} (resp.~\emph{non-split}) $\preceq$-minimal direct summand of $M$ relatively to $L$ if $Z\oplus \nabla Z$ is (resp.~is \emph{not}) a direct summand of $M$. Thus if $Z$ is non-split then $Z\simeq\nabla Z$, but the converse is not necessarily true: if $Z\simeq\nabla Z$ and there is more than one copy of $Z$ in $M$, then $Z$ is split even if it is $\nabla$-invariant (compare with example~\ref{Ex:SplitDirectsummand} below).
\\[1ex]
Let $S$ (for ``self-dual'') denote the direct sum of all the non-split $\preceq$-minimal direct summands of $M$ relatively to $L$ and let $T^0$ denote the direct sum of all the split $\preceq$-minimal direct summands of $M$ relatively to $L$. Then $M=S\oplus T^0\oplus \nabla T^0\oplus\overline{M}$. By Proposition~\ref{Prop:GenQuot}, 
the generic quotient of $M$ by $L$ has the the form $Q=T^1\oplus \nabla T^0\oplus\overline{M}$ (here $\widetilde{M}=\nabla T^0\oplus\overline{M}$).
We write $S=M(1)\oplus\cdots\oplus M(s)$ and $T^0=T^0_{s+1}\oplus\cdots\oplus T^0_N$ as a sum of their indecomposable direct summands. Thus, we write $M$ as a direct sum of $\ee$-representations as $M=M(1)\oplus\cdots\oplus M(N)\oplus\overline{M}$ where $M(k)=T^0_k\oplus\nabla T^0_k$ for $k=s+1,\cdots, N$. In view of Theorem~\ref{thm:isoclasses} we can assume that $\Psi$ is:
$$
\Psi=\bigoplus_{k=1}^N \Psi_k\oplus \overline{\Psi}:M(1)\oplus\cdots\oplus M(N)\oplus\overline{M}\rightarrow \nabla M(1)\oplus\cdots\oplus \nabla M(N)\oplus\nabla \overline{M}
$$
where  $\Psi_k:M(k)\rightarrow\nabla M(k)$ is a non-zero homomorphism for $k=1,\cdots, s$ and  
$$
\Psi_k:\xymatrix@C=90pt{M(k)=T^0_k\oplus\nabla T^0_k\ar^(.55){\left(\begin{array}{cc}{\scriptstyle 0}&{\scriptstyle Id_{\nabla T^0_k}}\\ {\scriptstyle \ee Id_{T^0_k}}&{\scriptstyle 0}\end{array}\right)}[r]& \nabla T^0_k\oplus T^0_k}.
$$ 
We denote by $\iota_k:L\rightarrow M(k)$ the component of $\iota$ along  the direct summand $M(k)$. The generic embedding $\iota$ has hence the form 
$$
\iota=(\iota_1,\cdots, \iota_N, 0)^t:L\hookrightarrow M(1)\oplus\cdots\oplus M(N)\oplus \overline{M}
$$
Let us discuss $\nabla \iota\circ\Psi\circ \iota$. With the choice we made for the form, we have $\nabla \iota\circ\Psi\circ \iota=\sum_{k=1}^N\nabla\iota_k\circ\Psi_k\circ \iota_k$. Every summand $\nabla\iota_k\circ\Psi_k\circ\iota_k$ is a multiple of $\varphi$. For $k\leq s$, $M(k)$ is indecomposable and $\nabla$-invariant, and thus $\nabla\iota_k\circ\Psi_k\circ\iota_k$ is non-zero by Lemma~\ref{Lem:ImageNableInvMorphism}. 
and the fact that $[L,\nabla L]\neq0$.
\\[1ex]
For $k>s$ we choose two homomorphisms $j_k:L\rightarrow T^0_k$ and $\ell_k:T^0_k\rightarrow \nabla T^0_k$ such that $j_k$ is non-zero and $\ell_k$ is non-zero whenever possible or stated differently. (Notice that $\ell_k$ can be chosen to be non-zero in case $[L,\nabla T^0_k]\neq 0$ by Corollary~\ref{Cor:Convexity}.) We can choose $\iota_k=(j_k,\ell_k\circ j_k)^t$, since the generic quotient $Q$ does not depend on $\ell_k$. By Lemma~\ref{Lem:ImageNableInvMorphism} and our assumption on $\ee$, we get $\nabla\iota_k\circ\Psi_k\circ \iota_k=\nabla j_k\circ\ell_k\circ j_k+\ee\nabla j_k\circ\nabla\ell_k\circ j_k=2 \nabla j_k\circ\ell_k\circ j_k$. In particular, if $[L,\nabla T^0_k]=0$, or $\ell_k$ is chosen to be zero, then $\nabla\iota_k\circ\Psi_k\circ \iota_k$ is zero.
\\[1ex]
Let us reorder the direct summands $M(k)$'s of $M$ so that $\nabla\iota_k\circ\Psi_k\circ\iota_k$ is non-zero for $1\leq k\leq h$ and it is zero for $k>h$. 
\\[1ex]If $h=0$ then $\nabla\iota\circ\iota$ is zero and thus $\iota(L)$ is isotropic. 
\\[1ex]Assume that $h\geq1$. For every $k=1,\cdots, h$ there exists a non-zero complex number $x_k$ such that $\nabla\iota_k\circ\Psi_k\circ\iota_k=x_k\varphi$. In particular we get 
$$
\nabla\iota\circ\iota=(x_1+\cdots+x_h)\varphi.
$$ 
If $h>1$ then let $(a_1,\cdots, a_h)\in(\CC\setminus\{0\})^h$ be a non-zero solution of the linear equation $x_1+\cdots+x_h=0$ (for example $(1,1,\cdots, 1, -(h-1))$). Let $g$ be the automorphism of $M$ which rescales the direct summand $M(k)$ by $\sqrt{\frac{a_k}{x_k}}$ for $k=1,\cdots, h$ and acts as the identity on all the other direct summands. Then $j=g\circ \iota$ is a generic embedding such that $\nabla j\circ j=(a_1+\cdots+a_k)\varphi=0$ which is what we wanted to prove.  (We notice that in the case when $h>1$ there is no need of the hypotheses of the proposition.)
\\[1ex]
Now we discuss the case $h=1$ which again splits up into two cases:
\begin{enumerate}
\item $S=M(1)$ is indecomposable and $[L,\nabla T^0]=0$. We show that this case contradicts our hypotheses. 
By hypothesis, there is a short exact sequence
$$
\xymatrix{
0\ar[r]&L\ar[r]&S\oplus T^0\ar[r]&T^1\ar[r]&0
}
$$
such that the induced homomorphism $\Hom(S\oplus T^0,\nabla L)\rightarrow\Hom(L,\nabla L)$ is surjective (this is because $\nabla \iota_1\circ\iota_1=x_1\varphi$ is a basis of $\Hom(L,\nabla L)$); it follows that $[T^1,\nabla L]=[T^0,\nabla L]=[L,\nabla T^0]=0$.

The generic quotient of $M$ by $L$ is $Q=T^1\oplus \nabla T^0\oplus\overline{M}$. By hypothesis~\eqref{Eqn:Hypt2KeyLemma} there exists a surjective homomorphism $\xymatrix{Q\ar@{->>}[r]&\nabla L}$ with generic kernel $K$; since $[T_1,\nabla L]=0$,  $T_1$ is a direct summand of $K$. Moreover, by the definition of $T^0$, and again because $[T^1,\nabla L]=0$, we see that $\nabla T^0$ is the direct sum of all the indecomposable direct summands of $Q$ which are $\preceq$-maximal in the support of $\Hom(-,\nabla L)$. By the dual version of Proposition~\ref{Prop:GenQuot}, we conclude that there is a surjective homomorphism $\xymatrix{\nabla T^0\ar@{->>}[r]&\nabla L}$ and that if we denote by $\overline{K}$ its kernel, then  $K=T^1\oplus \overline{M}\oplus \overline{K}$. We claim that $[L,K]^1\neq 0$. Indeed, we apply $\Hom(L,-)$ to the short exact sequence 
$$
\xymatrix{
0\ar[r]&K=T^1\oplus \overline{K}\oplus \overline{M}\ar[r]&Q=T^1\oplus\nabla T^0\oplus \overline{M}\ar[r]&\nabla L\ar[r]&0
}
$$
and get that the induced map $\Hom(L,\nabla L)\rightarrow\Ext^1(L,K)$ is injective. Since $[L,\nabla L]=1$, this contradicts hypothesis~\ref{Eqn:Hypt3KeyLemma}. 

\item $S$ is not present,  $[L,T^0_1\oplus \nabla T^0_1]=2$ and $[L,T^0_k\oplus \nabla T^0_k]=1$ for all $2\leq k\leq N$.  In this case we choose $\ell_1=0$ and $\iota_1=(j_1,0)^t: L\rightarrow T^0_1\oplus\nabla T^0_1$ and get that $\nabla \iota\circ\iota=0$ without further hypotheses. \qedhere
\end{enumerate}
\end{proof}
\subsubsection*{Examples} The following two examples show that  the  hypotheses of Proposition~\ref{Prop:KeyLemma} are necessary. 
\begin{example}\label{Ex:NonIsoEmbedding1}
Let $\QQ$ be of type $\stackrel{\rightarrow}{A_n}$ and let $M=U_{i,\sigma(i)}$ be an indecomposable $\ee$--representation of type $(I)$, for some vertex $i\leq n/2$. Let $L$ be a subrepresentation of $M$ such that $[L,\nabla L]=1$, e.g. $L=U_{j,\sigma(i)}$ for some $i\leq j\leq n/2$. Then the quotient $M/L$ does not surject onto $\nabla L$ (actually $[M/L,\nabla L]=0$), contradicting hypothesis \eqref{Eqn:Hypt2KeyLemma}. In this case $L$ is not an isotropic subrepresentation of $M$.
\end{example}
\begin{example}\label{Ex:NonIsoEmbedding2}
Let $\QQ$ be of type $\stackrel{\rightarrow}{A_5}$. We consider $L\in\ind(\QQ)$ and the \emph{orthogonal} $\QQ$--representation $M=S\oplus T^0\oplus\nabla T^0$ given by
$$
\xymatrix@!R=2pt@!C=2pt{
&&&&\bullet\ar@{->>}[dr]&&&&\\
&&&\nabla T^0\ar[ur]\ar@{->>}[dr]&&T^0\ar@{->>}[dr]&&&\\
&&\nabla T^1\ar[ur]\ar@{->>}[dr]&&\bullet\ar[ur]\ar@{->>}[dr]&&T^1\ar@{->>}[dr]&&\\
&H\ar[ur]\ar@{->>}[dr]&&L\ar[ur]\ar@{->>}[dr]&&\nabla L\ar[ur]\ar@{->>}[dr]&&\bullet\ar@{->>}[dr]&\\
\bullet\ar[ur]&&\bullet\ar[ur]&&S\ar[ur]&&\bullet\ar[ur]&&\bullet
}
$$
(the conclusion of this example applies in every situation where the mutual position of the summands in the AR-quiver is similar to the above). Then, $[L,M]=2$, $L$ embeds into $M$, the generic quotient of $M$ by $L$ is $Q=T^1\oplus \nabla T^0$ and there is a surjective homomorphism $\xymatrix@1@C=15pt{p:Q\ar@{->>}[r]&\nabla T^0\ar@{->>}[r]&\nabla L}$, thus  hypothesis~\eqref{Eqn:Hypt2KeyLemma} is satisfied. The kernel of $p$ is $K=T^1\oplus H$ and thus $[L,K]^1=1$ contradicting hypothesis \eqref{Eqn:Hypt3KeyLemma}. In this case it is not true that a generic embedding of $L$ into $M$ is isotropic. To see this, let us fix an orthogonal basis of $M$ as follows
$$
\xymatrix@R=0pt{
&&&u&&\\
M=&&v_1\ar[r]&v_2\ar[r]&v_3\ar[r]&v_4\\
&v_4^\ast\ar[r]&v_3^\ast\ar[r]&v_2^\ast\ar[r]&v_1^\ast&
}
$$ 
with $\langle u,u\rangle=\langle v_i,v_i^\ast\rangle=\langle v_i^\ast,v_i\rangle=1$. Then 
$$
\xymatrix@R=0pt{
\iota(L)=&&&xu+v_2^\ast\ar[r]&v_1^\ast&
}
$$
for some $x\neq 0$ and thus $\langle \iota(L),\iota(L)\rangle=\langle xu,xu\rangle=x^2\neq0$, proving that $\iota(L)$ is not isotropic. (If $x=0$, the map is still an embedding, it is isotropic but it is not generic, since the quotient is $\nabla H\oplus S\oplus\nabla T^0$ which is a degeneration of $Q$.)
\end{example}
Let us illustrate the proof of Proposition~\ref{Prop:KeyLemma} in an example.
\begin{example}
Let us consider the following indecomposable representation $L$ and the following \emph{orthogonal} representation $M=S\oplus T^0\oplus\nabla T^0$ of a quiver of type $\stackrel{\rightarrow}{A_5}$:
$$
\xymatrix@!R=3pt@!C=3pt{
&&&&S\ar@{->>}[dr]\ar@/^1pc/@{-->}^{\nabla\iota_S}[ddrr]&&&&\\
&&&\nabla T^1\ar[ur]\ar@{->>}[dr]&&T^1\ar@{->>}[dr]&&&\\
&&L\ar[ur]\ar@{->>}^{\iota_{T^0}}[dr]\ar@/^1pc/@{-->}^{\iota_S}[uurr]&&\bullet\ar[ur]\ar@{->>}[dr]&&\nabla L\ar@{->>}[dr]&&\\
&\bullet\ar[ur]\ar@{->>}[dr]&&T^0\ar[ur]\ar^\ell@{-->}[rr]\ar@{->>}[dr]&&\nabla T^0\ar_{\nabla\iota_{T^0}}[ur]\ar@{->>}[dr]&&\bullet\ar@{->>}[dr]&\\
\bullet\ar[ur]&&\bullet\ar[ur]&&\bullet\ar[ur]&&\bullet\ar[ur]&&\bullet
}
$$
We choose non-zero homomorphisms $\iota_S$, $\iota_{T^0}$ and $\ell$ as depicted. Then  the homomorphism
$
\iota=(\iota_S,\iota_{T^0},\ell\circ\iota_{T^0})^t:L\rightarrow S\oplus T^0\oplus\nabla T^0
$ is a generic embedding. The quotient is $Q=T^1\oplus\nabla T^0$. Then $\nabla\iota\circ \iota=\nabla\iota_S\circ\iota_S+2\nabla \iota_{T^0}\circ\ell\circ \iota_{T^0}$. There exists a non-zero complex number $x$ such that $\nabla \iota_S\circ\iota_S=x(2\nabla \iota_{T^0}\circ\ell\circ \iota_{T^0})$. We define $j_{T^0}:=\frac{\sqrt{-x}}{\sqrt{2}}\iota_{T^0}$ and $j:=(\iota_S, j_{T^0},\ell\circ j_{T^0})^t:L\rightarrow M$. Then $j$ is a generic embedding of $L$ into $M$ and $\nabla j\circ j=0$. 
\end{example}
\begin{example}\label{Ex:SplitDirectsummand}
Let $\QQ:1\rightarrow 2\rightarrow 3$ be of type $\overrightarrow{A}_3$. Its AR-quiver is shown in Example~\ref{Ex:ARQuiverA3}. Let $M=P_1^n$ and let $L=P_2$. Then $M$ is an orthogonal representation for every $n\geq1$ and it is also symplectic for $n$ even. If $n=1$, then $L$ does not embed isotropically into $M$ (compare with Example~\ref{Ex:NonIsoEmbedding1}). For $n\geq 2$, $L$ embeds isotropically into $M$ because we can change the form so that one copy of $P_1$ is \emph{isotropic}, i.e. we can write $M$ as $T^0\oplus\nabla T^0\oplus \overline{M}$ where $T^0\simeq \nabla T^0\simeq P_1$; then the embedding $P_2\hookrightarrow P_1$ is isotropic as well and it is generic. This example motivates our choice for the (unexpected) decomposition $M=T^0\oplus\nabla T^0\oplus\overline{M}$ which is not the natural decomposition into indecomposable $\ee$-representations (for $\ee=1$).
\end{example}

\subsection{Generic $\ee$-subquotients}\label{Sec:GenESubquot}
Let $M\in R(\A,V)^{\Psi,\ee}$ be an $\ee$-representation and let $L\in\ind(\QQ)$.
If there exists an isotropic embedding $\iota:L\rightarrow M$ then the quotient $\iota(L)^\perp/\iota(L)$ is called \emph{the $\ee$-subquotient of $M$ by $\iota(L)$}. In this section we study $\ee$-subquotients of $M$ induced by a generic isotropic embedding of $L$ into $M$.
It turns out that there are substantial differences between the split and the non-split types.

Let $\iota:L\rightarrow M$ be a monomorphism with generic quotient $\pi:\xymatrix{M\ar@{->>}[r]&Q}$ and assume further that $\iota(L)$ is isotropic. Let $\xymatrix@1{Y=\iota(L)^\perp/\iota(L)}$ be the $\ee$-subquotient. We use the notation introduced in Section~\ref{Sec:GenericQuot} and in the proof of Proposition~\ref{Prop:KeyLemma}
\begin{prop}\label{Prop:ESubquot}
\begin{enumerate}
\item
In the split  types,  $Y\simeq T^1\oplus\nabla T^1\oplus\overline{M}$. 
\item 
In the non-split types, there exists a generic embedding $\iota$ so that the corresponding $\ee$-subquotient
$Y$ is the generic quotient of $\nabla Q$ by $L$.
\end{enumerate}
\end{prop}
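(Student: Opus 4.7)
The plan is to start from the $\A$-module isomorphism $\iota(L)^\perp \simeq \nabla(M/\iota(L)) = \nabla Q$ furnished by Lemma~\ref{Lem:NperpNabla}. Under this isomorphism, the inclusion $\iota(L)\hookrightarrow \iota(L)^\perp$ becomes an $\A$-embedding $j\colon L\hookrightarrow\nabla Q$, so that $Y\simeq \nabla Q/j(L)$ as $\A$-modules. By Theorem~\ref{thm:isoclasses} this already determines $Y$ as an $\ee$-representation, so throughout it suffices to identify the $\A$-module class of the cokernel $\nabla Q/j(L)$.

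For part (i), I would first note that in the split types every indecomposable $\ee$-summand of $M$ has the form $Z\oplus\nabla Z$, so the setup of Proposition~\ref{Prop:KeyLemma} collapses to $S=0$ and $M=T^0\oplus\nabla T^0\oplus\overline M$. By Lemma~\ref{Lem:IsoSplitTypes} every embedding $L\hookrightarrow M$ is isotropic, so a generic embedding $\iota$ in the sense of Proposition~\ref{Prop:GenQuot} is automatically isotropic, and $Q=T^1\oplus\nabla T^0\oplus\overline M$. Using Corollary~\ref{cor:NoMatterForm} I would put $\Psi$ in block-hyperbolic shape on $T^0\oplus\nabla T^0$ and into its prescribed form on $\overline M$, and then invoke the description of the generic embedding from the proof of Proposition~\ref{Prop:GenQuot} to arrange, up to an $\A$-automorphism of $M$, that $\iota(L)\subset T^0$. (This adjustment does not change the $\A$-module class of $\iota(L)^\perp/\iota(L)$, because both the perp and its quotient can be described intrinsically as $\nabla(M/\iota(L))$ and as $\nabla Q/j(L)$.) The annihilator of $\iota(L)\subset T^0$ inside $\nabla T^0$ is then $\nabla(T^0/\iota(L))=\nabla T^1$, so $\iota(L)^\perp=T^0\oplus\nabla T^1\oplus\overline M$ and consequently $Y\simeq T^1\oplus\nabla T^1\oplus\overline M$.

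For part (ii) I would take $\iota$ to be the isotropic generic embedding produced by Proposition~\ref{Prop:KeyLemma}, whose existence is exactly what the non-split hypothesis buys. The quotient is again $Q=T^1\oplus\nabla T^0\oplus\overline M$, hence $\nabla Q\simeq \nabla T^1\oplus T^0\oplus\overline M$. The main task is to show that the induced map $j\colon L\hookrightarrow\nabla Q$ is itself a generic embedding in the sense of Proposition~\ref{Prop:GenQuot}; once this is done, its cokernel $Y$ is the generic quotient of $\nabla Q$ by $L$, as claimed. I would argue this by unpacking $j$ componentwise using the explicit form of $\iota=(\iota_1,\dots,\iota_N,0)^t$ built in the proof of Proposition~\ref{Prop:KeyLemma}: the $\preceq$-minimal indecomposable summands of $\nabla Q$ lying in the support of $\Hom(L,-)$ are precisely the summands $T^0_k$ (since the $\nabla T^1_i$ sit strictly above them in $\Gamma_\QQ$, and $\overline M$ contributes no new $\preceq$-minimal summands in this support), and on each of these the component of $j$ is, up to the nonzero scalars coming from the rescaling $g$ in Proposition~\ref{Prop:KeyLemma}, just the component of $\iota$ on $T^0_k$, which is nonzero by construction.

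The main obstacle will be this componentwise verification in part (ii): concretely, tracing the isomorphism $\iota(L)^\perp\simeq\nabla Q$ through the decomposition $M=S\oplus T^0\oplus\nabla T^0\oplus\overline M$ of the KeyLemma, and confirming both (a) that no $\preceq$-minimal summand of $\nabla Q$ in the support of $\Hom(L,-)$ is overlooked (in particular that summands from $\nabla T^1$ and from $\overline M$ are ruled out as $\preceq$-minimal) and (b) that the rescaling step does not accidentally annihilate any of the relevant components of $j$. Once these points are checked, Proposition~\ref{Prop:GenQuot} applied to $L\hookrightarrow\nabla Q$ identifies $Y=\nabla Q/j(L)$ with the generic quotient of $\nabla Q$ by $L$.
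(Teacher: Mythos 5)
Your reduction via Lemma~\ref{Lem:NperpNabla} to the computation of $\nabla Q/j(L)$ is exactly the paper's viewpoint, and your part~(i) is essentially correct: in the split types one can \emph{directly} choose a generic embedding with image inside $T^0$ (it is isotropic by Lemma~\ref{Lem:IsoSplitTypes}, and generic by Proposition~\ref{Prop:GenQuot} since its components on every $T^0_k$ are nonzero), and then the orthogonal complement is visibly $T^0\oplus\nabla T^1\oplus\overline M$. The phrasing ``arrange up to an $\A$-automorphism of $M$'' is shaky, since the automorphism $\ell$ from Proposition~\ref{Prop:GenQuot} need not be an isometry for $\Psi$, so it can change the $\A$-isomorphism class of $\iota(L)^\perp/\iota(L)$ (the class of $\iota(L)^\perp\simeq\nabla Q$ is fixed, but the class of the subquotient also depends on how $L$ sits inside it); but since only the existential statement is needed, the direct construction repairs this.

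Part~(ii), however, has a genuine gap, precisely at the point you yourself flag as the ``main obstacle.'' Your parenthetical justification that the $\nabla T^1_i$ ``sit strictly above'' the $T^0_k$ in $\Gamma_\QQ$, so that the $\preceq$-minimal summands of $\nabla Q$ in the support of $\Hom(L,-)$ are exactly the $T^0_k$, is false. In fact $\nabla T^1_i$ and $T^0_i$ lie in the \emph{same column} of $\Gamma_\QQ$ (see Example~\ref{Ex:GenericLperpmoduloL} and the two diagrams in the paper's proof of Proposition~\ref{Prop:ESubquot}), hence are $\preceq$-incomparable; so when $[L,\nabla T^1_i]\neq 0$, $\nabla T^1_i$ \emph{is} a new $\preceq$-minimal summand of $\nabla Q$ in the support of $\Hom(L,-)$. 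Showing the induced embedding $\nabla f\colon L\hookrightarrow \nabla T^1\oplus T^0\oplus\overline M$ has nonzero component on such a $\nabla T^1_i$ is not simply inherited from $\iota$; it is the content of the paper's analysis using the snake diagram~\eqref{Eq:DiagIsotropicEmbedding} and the relations~\eqref{Eq:EqualityJ1},~\eqref{Eq:JNSplitTypes} relating $j$, $p_S$, $p_{T^0}$ and $\ell$, together with the freedom to choose $\ell_k\neq 0$. Your plan, as written, would declare these summands out of play and therefore not produce a complete proof of genericity.
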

\begin{proof}
The proof of Proposition~\ref{Prop:ESubquot} needs a little preparation.
\\[1ex]
We decompose $M=S\oplus T^0\oplus\nabla T^0\oplus\overline{M}$ as in the proof of Proposition~\ref{Prop:KeyLemma}, with the convention that $S$ is zero in the split types. 
The direct summand $\overline{M}$ does not play any role, since it splits off in $M$, $Q$ and $Y$ for every choice of a generic embedding. Thus we assume, for simplicity of notation, that it is zero. 
\\[1ex]
By Theorem~\ref{thm:isoclasses}, or its Corollary~\ref{cor:NoMatterForm}, we can choose the $\ee$-form at our convenience. We thus assume that $\Psi:M\rightarrow\nabla M$ is given in the following block form 
$$
\xymatrix@C=100pt{
\Psi: S\oplus T^0\oplus\nabla T^0
\ar^
{
\left(
\begin{array}{ccc}
{\scriptstyle\Psi_S}&0&0\\
0&0&{\scriptstyle Id_{\scriptscriptstyle T^0}}\\
0& \ee {\scriptstyle Id}_{\scriptscriptstyle \nabla T^0}&0
\end{array}
\right)
}[r]&\nabla S\oplus\nabla T^0\oplus T^0.
}
$$
\\[1ex]
For every index $k$, such that $[L,\nabla T^0_k]=1$, we choose arbitrarily a morphism $\ell_k:T^0_k\rightarrow\nabla T^0_k$ and let $\ell=\textrm{diag}(\ell_k):T^0\rightarrow\nabla T^0$ be the induced diagonal morphism. 
\\[1ex]
As in the proof of Proposition~\ref{Prop:KeyLemma}, and by Lemma~\ref{Lem:IsoSplitTypes}, we assume that the generic embedding $\iota$ depends on the choice of $\ell$ and it is given by $\iota=(\iota_S,\,\iota_{T^0},\,\ell\circ\iota_{T^0})^t$. Thus, there is a short exact sequence of the form
$$
\xymatrix@C=80pt{
0\rightarrow L\ar^(.45){\iota=(\iota_S,\,\iota_{T^0},\,\ell\circ\iota_{T^0})^t}[r]&S\oplus T^0\nabla T^0\ar^(.45){\left(\begin{array}{ccc}{\scriptstyle p_S}&{\scriptstyle p_{T^0}}&0\\0&{\scriptstyle -\ell}&
{\scriptstyle 1}\end{array}\right)}[r]& T^1\oplus\nabla T^0\rightarrow 0.
}
$$
A short computation shows that condition $\nabla\iota\circ\Psi\circ\iota=0$ is equivalent to 
$$
\nabla \iota_S\circ\Psi_S\circ\iota_S+\nabla\iota_{T^0}\circ\ell\circ\iota_{T^0}+\ee\nabla\iota_{T^0}\circ\nabla\ell\circ\iota_{T^0}=0.
$$
In the split types this condition is redundant, by Lemma~\ref{Lem:ImageNableInvMorphism}. On the other hand, in the non-split types, since $\ee\nabla\ell=\ell$, this becomes $\nabla \iota_S\circ\Psi_S\circ\iota_S+2\nabla\iota_{T^0}\circ\ell\circ\iota_{T^0}=0$.
\\[1ex] 
By Lemma~\ref{Lem:NperpNabla}, since $\iota(L)$ is isotropic, it is contained in the kernel of the map $\nabla\iota\circ\Psi: M\rightarrow\nabla L$. Thus, there exists $j:T^1\rightarrow \nabla L$ and a surjective map that we denote by $f:T^1\oplus\nabla T^0\rightarrow \nabla L$ which makes the following diagram commutative:
$$
\xymatrix@C=80pt{
0\rightarrow L\ar^{\iota=\left(\begin{array}{c} {\scriptstyle\iota_S}\\ {\scriptstyle\iota_{T^0}}\\ 
{\scriptstyle\ell\circ\iota_{T^0}}\end{array}\right)}[r]&S\oplus T^0\nabla T^0
\ar^{\pi=\left(\begin{array}{ccc}{\scriptstyle p_S}&{\scriptstyle p_{T^0}}&{\scriptstyle 0}\\ {\scriptstyle 0}&{\scriptstyle -\ell}&{\scriptstyle 1}\end{array}\right)}[r]
\ar_{({\scriptstyle \nabla\iota_S\circ \Psi_S,\,\nabla\iota_{T^0}\circ\nabla\ell,\, \ee\nabla\iota_{T^0}})}^{{\scriptstyle=\nabla\iota\circ\Psi}}[d]& T^1\oplus\nabla T^0\rightarrow 0\ar^{{\scriptstyle f=(j,\ee\nabla\iota_{T^0})}}[d]\\
&\nabla L\ar@{=}[r]&\nabla L
}
$$
We notice that $j$ satisfies the following equalities
\begin{eqnarray}\label{Eq:EqualityJ1}
j\circ p_S&=&\nabla\iota_S\circ\Psi_S,\\\label{Eq:EqualityJ2}
j\circ p_{T^0}-\ee\nabla\iota_{T^0}\circ\ell&=&\nabla\iota_{T^0}\circ\nabla\ell.
\end{eqnarray}
In the split types, equation \eqref{Eq:EqualityJ1} is not present since $S$ is zero; moreover, since by Lemma~\ref{Lem:ImageNableInvMorphism} $\ee\nabla\ell=-\ell$,  equation \eqref{Eq:EqualityJ2} becomes $j\circ p_{T^0}=0$; since $p_{T^0}$ is surjective (since $(p_S,p_{T^0})=p_{T^0}$ is surjective) we arrive at  the following crucial observation:
\begin{equation}\label{Eq:JIsZeroSplitTypes}
j=0\textrm{ in the split types}.
\end{equation}
In the non-split types, since by Lemma~\ref{Lem:ImageNableInvMorphism} $\ee\ell=\nabla\ell$, equation~\eqref{Eq:EqualityJ2} becomes 
\begin{equation}\label{Eq:JNSplitTypes}
j\circ p_{T^0}=2\nabla\iota_{T^0}\circ\nabla\ell\textrm{ in the non-split types.}
\end{equation} 
We have constructed the following commutative diagrams whose rows and columns are short exact sequences 
\begin{equation}\label{Eq:DiagIsotropicEmbedding}
\vcenter{\xymatrix{
L\ar^(.3){\nabla f}@{^(->}[r]\ar@{=}[d]&\nabla Q=\nabla T^1\oplus T^0\ar@{->>}[r]\ar^{\Psi^{-1}\circ\nabla\pi}@{^(->}[d]&Y\ar@{^(->}[d]\\
L\ar^(.3)\iota@{^(->}[r]&M=S\oplus T^0\oplus\nabla T^0\ar^\pi@{->>}[r]\ar^{\nabla\iota\circ\Psi}@{->>}[d]&Q=T^1\oplus \nabla T^0\ar^f@{->>}[d]\\
&\nabla L\ar@{=}[r]&\nabla L.
}}
\end{equation}
We remark that all the maps involved depend on the choice of $\ell$. We need to show that in the split types $Y\simeq T^1\oplus\nabla T^1$, for every choice of $\ell$, and in the non-split types, there is a choice of $\ell$ such that $\nabla f$ is a generic embedding of $L$ into $\nabla Q$. 
\\[1ex]
In the split types, by \eqref{Eq:JIsZeroSplitTypes}, we have  $\nabla f =(0, \ee\iota)^t$ and thus $Y\simeq T^1\oplus\nabla T^1$ for every choice of $\ell$, proving the first part of the proposition.
\\[1ex]
In the non-split types, we choose $\ell_k\neq0$ for every possible index $k$ (see the proof of Proposition~\ref{Prop:KeyLemma}). Then $\nabla f=(\nabla j,\ee\iota _{T^0})$. We claim that we can choose $j$ such that this map is generic. In view of Proposition~\ref{Prop:GenQuot}, it is enough to show that $\nabla f$ hits all the $\preceq$-minimal indecomposable direct summands of $\nabla T^1\oplus T^0$ contained in the support of $\Hom(L,-)$. By definition of $\iota_{T^0}$, every direct summand of $T^0$ is hit by the map. Thus, we only need to discuss the direct summands of $\nabla T^1$. Let $\nabla T^1_i$ be a $\preceq$-minimal direct summand of $\nabla T^1\oplus T^0$ contained in the support of $\Hom(L,-)$. We claim that we can choose $j$ such that the component $\nabla j_i:L\rightarrow \nabla T^1_i$  is non-zero. Since $T^0_i\preceq T^1_i$, necessarily $T^1_i\not\preceq \nabla T^1_i$ and hence we are in one of the  following situations in $\Gamma_\QQ$
$$
\begin{array}{ccc}
\vcenter{
\xymatrix{
&T^0_i\ar@{..>}[dr]\ar@{~>}^{\ell_i}[r]&\nabla T^0_i\ar@{~>}[dr]&\\
L\ar@{~>}^{\nabla j_i}[r]\ar@{~>}^{(\iota_{T^0})_i}[ur]\ar@{~>}_{(\iota_{T^0})_{i+1}}[dr]&\nabla T^1_{i}\ar@{..>}[dr]\ar@{..>}[ur]& T^1_{i}\ar@{~>}^{j_i}[r]&\nabla L\\
&T^0_{i+1}\ar@{..>}[ur]\ar@{~>}_{\ell_{i+1}}[r]&\nabla T^0_{i+1}\ar@{~>}[ur]&
}}
&
\textrm{or}
&
\vcenter{
\xymatrix@C=5pt{
                                                                               &&T^0_i\ar@{..>}[drr]\ar@{~>}^{\ell_i}[rr]  &                                             &\nabla T^0_i\ar@{~>}[drr] &&\\
L\ar@{~>}^{\nabla j_i}[rr]\ar@{~>}^{(\iota_{T^0})_i}[urr]\ar@{~>}_{(\iota_S)_j}[drrr]&&\nabla T^1_{i}\ar@{..>}[dr]\ar@{..>}[urr]&                                             &T^1_{i}\ar@{~>}^{j_i}[rr]    &&\nabla L\\
								           &&                                                              &S_j\ar@{..>}[ur]\ar@{~>}[urrr]&                                         &&
}}
\end{array}
$$
where the dotted arrows $\xymatrix{\ar@{..>}[r]&}$ denote sectional paths, the wavy arrows $\xymatrix{\ar@{~>}[r]&}$ denote a composition of sectional paths and $S_j$ denotes an indecomposable direct summand of $S$. In the situation on the right, $\nabla j_i$ is non-zero by \eqref{Eq:EqualityJ1}. In the left-hand situation, if either $\ell_i$ or $\ell_{i+1}$ is non-zero then $\nabla j_i$ is non zero by \eqref{Eq:JNSplitTypes}. If both $\ell_i$ and $\ell_{i+1}$ are zero, then any choice of $\nabla j_i$ makes the equation~\ref{Eq:JNSplitTypes} true; in particular we can choose $(\nabla j)_i$ to be non-zero. This concludes the proof.
\end{proof}
\begin{definition}
The representation $Y$ constructed in Proposition~\ref{Prop:ESubquot} is called \emph{the generic $\ee$-subquotient of $M$ by $L$}.
\end{definition}
\begin{rem}
We called $Y$ \emph{generic} since it comes from a generic embedding. In the proof of Theorem~\ref{Thm:MainThm} we will see that  $Y$ has  ``generic  properties'' (see remark~\ref{Rk:GenericSubquotient}).
\end{rem}

\begin{example}
Let $\QQ$ be the quiver of Example~\ref{Ex:ARQuiverAEvenAltern} and let us consider the following indecomposable $\QQ$-representations:
$$
\xymatrix@!R=3pt@!C=3pt{
&T^0\ar[dr]&&\nabla T^0\ar[dr]&\\
L\ar[ur]\ar[dr]&&Y\ar[ur]\ar[dr]&&\nabla L\\
&\nabla T^1\ar[ur]\ar[dr]&&T^1\ar[ur]\ar[dr]&\\
\bullet\ar[ur]&&S\ar[ur]&&\bullet
}
$$
For $\ee=-1$ let $M=T^0\oplus\nabla T^0\oplus S$. The generic quotient of $M$ by $L$ is $Q=T^1\oplus\nabla T^0$. Then $\nabla Q=\nabla T^1\oplus T^0$ and the generic quotient of $\nabla Q$ by $L$ is $Y$ which, by Proposition~\ref{Prop:ESubquot} is the generic $(-1)$-subquotient of $M$ by $L$. 
\\[1ex]
For $\ee=1$ let $M=T^0\oplus\nabla T^0\oplus S\oplus \nabla S$. 
Then the generic quotient of $M$ by $L$ is $Q=T^1\oplus \nabla T^0\oplus \nabla S$ and 
by Proposition~\ref{Prop:ESubquot} the generic $(1)$-subquotient of $M$ by $L$ is $T^1\oplus\nabla T^1$.
\end{example}
\begin{example}\label{Ex:GenericLperpmoduloL}
Let $\QQ=\stackrel{\rightarrow}{A_4}$. Let us consider the $\QQ$-representations $M=T^0\oplus\nabla T^0$ and $L$ given by 
$$
\xymatrix@!R=3pt@!C=3pt{
&&&\bullet\ar@{->>}[dr]&&&\\
&&T^0\ar@{..>}^\ell[rr]\ar[ur]\ar@{->>}[dr]&&\nabla T^0\ar@{->>}[dr]&&\\
&L\ar[ur]\ar@{->>}[dr]&&S\ar[ur]\ar@{->>}[dr]&&\nabla L\ar@{->>}[dr]&\\
\bullet\ar[ur]&&\nabla T^1\ar[ur]&&T^1\ar[ur]&&\bullet
}
$$
Since $[L,\nabla L]=0$, every embedding of $L$ into $M$ is isotropic.
The generic quotient of $M$ by $L$ is $Q=T^1\oplus\nabla T^0$ and thus $\nabla Q=T^0\oplus\nabla T^1$. By Proposition~\ref{Prop:ESubquot}, the generic $\ee$-subquotient of $M$ by $L$ is $S$ if $\ee=-1$ and it is $T^1\oplus\nabla T^1$ if $\ee=1$.
\end{example}

\section{Main result}\label{Sec:MainResult}
\noindent
In this section we state and prove the main result of the paper. For convenience of the reader, we recall the notation used throughout the paper. Let $(\QQ,\sigma)$ be a symmetric quiver of type $A_n$, for some fixed $n\geq 2$. Let $\A=\mathrm{k}\QQ$ be its complex path algebra. Let $\ee$ be $+1$ or $-1$. Let $(V,\Psi)$ be an $\ee$-quadratic space for $(\QQ,\sigma)$ (see Section~\ref{Subsec:Psi}) and let $\mathbf{d}=\dimv V$. For simplicity of notation, we denote by $R_\mathbf{d}$ the variety $R(\A,V)$ of $\QQ$-representations with underlying vector space $V$ and by $R_\mathbf{d}^\ee$ the subvariety $R(\A,V)^{\Psi,\ee}$ of $\ee$-representations with respect to $(V,\Psi)$ (see \eqref{Eq:RepVarPsi} for the definition). We denote by $\G_\mathbf{d}=\prod_{i\in\QQ_0}\GL(V_i)$ the structure group and by $\G^\ee_\mathbf{d}:=\G^\bullet(V,\Psi)$ the subgroup of graded isometries of $(V, \Psi)$.  Given $M,N\in R_\mathbf{d}^\ee$ we define the $\ee$-degeneration order by $M\degg^\ee N$ if $N\in \overline{\G^\ee M}$. 

\begin{thm}\label{Thm:MainThm}
For every $M,N\in R_\mathbf{d}^\ee$ the following equivalences hold:
\begin{equation}\label{Eq:MainThm}
\xymatrix{
M\degg N\ar@{<=>}[r]&M\degg^\ee N\ar@{<=>}[r]&M\leq_{\Ext}^\ee N.
}
\end{equation}
\end{thm}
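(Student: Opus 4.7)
The theorem contains three equivalences. The implication $\leq_{\Ext}^\ee \Rightarrow \degg^\ee$ is Corollary~\ref{cor:ExtDeg}, and $\degg^\ee \Rightarrow \degg$ follows at once from the inclusion $\G^\ee_\mathbf{d} \subseteq \G_\mathbf{d}$. The entire content is therefore the converse implication $\degg \Rightarrow \leq_{\Ext}^\ee$, which I plan to prove by induction on $\dim V$ by imitating Bongartz's non-symmetric argument recalled in Corollary~\ref{Cor:EquivalenceOrdersDynkin}.

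Given $M \degg N$ in $R_\mathbf{d}^\ee$, I would select a $\preceq$-minimal indecomposable $\QQ$-summand $L$ of $N$. Minimality gives $[L,N]^1 = 0$; upper-semicontinuity of $\Ext^1(L,-)$ combined with the Euler--Ringel identity~\eqref{Eq:HomologicalInterpretationEulerForm} then yields $[L,M] = [L,N]$, and Theorem~\ref{Thm:Bongartz} produces an embedding $L \hookrightarrow M$ whose generic quotient $Q$ degenerates to $N/L = \nabla L \oplus \overline{N}$, where $\overline{N}$ denotes the $\ee$-complement of $L \oplus \nabla L$ in $N$. I would then promote this embedding to an isotropic one via Proposition~\ref{Prop:KeyLemma}: condition~\eqref{Eqn:Hypt2KeyLemma} is furnished by the closedness of the locus of quotients that surject onto $\nabla L$, applied to $Q \degg \nabla L \oplus \overline{N}$; condition~\eqref{Eqn:Hypt3KeyLemma}, namely $[L,K]^1 = 0$ for the generic kernel $K$, is deduced from $\preceq$-minimality of $L$ together with the Auslander--Reiten formulas~\eqref{Eq:ARFormula} applied to the indecomposable summands of $K$ (all of which lie strictly $\preceq$-above $L$). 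Theorem~\ref{Thm:IsoDeg} then yields the symmetric degeneration $M \degg^\ee L \oplus \nabla L \oplus Y$, with $Y$ the generic $\ee$-subquotient of $M$ by $L$; this is already the Ext-step $M \leq_{\Ext}^\ee L \oplus \nabla L \oplus Y$.

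To close the induction I need $Y \degg \overline{N}$, whereupon the inductive hypothesis furnishes $Y \leq_{\Ext}^\ee \overline{N}$ and concatenation produces $M \leq_{\Ext}^\ee L \oplus \nabla L \oplus \overline{N} = N$. In the non-split types Proposition~\ref{Prop:ESubquot}(ii) identifies $Y$ with the generic quotient of $\nabla Q$ by $L$; applying $\nabla$ to $Q \degg \nabla L \oplus \overline{N}$ and using $\nabla \overline{N} \simeq \overline{N}$ yields $\nabla Q \degg L \oplus \overline{N}$, and a second application of Theorem~\ref{Thm:Bongartz} to the pair $(\nabla Q, L \oplus \overline{N})$ cancels $L$ and produces $Y \degg \overline{N}$. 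In the split types Proposition~\ref{Prop:ESubquot}(i) returns the more symmetric $Y \simeq T^1 \oplus \nabla T^1 \oplus \overline{M}$, and here $Y \degg \overline{N}$ must be extracted from the Hom-inequalities implied by $Q \degg \nabla L \oplus \overline{N}$, with the evenness of $\delta_{M,N}$ on $\delta$-fixed indecomposables (Proposition~\ref{prop:DeltaEven}) indispensable for aligning the parities on each side of the ``swap'' of $\nabla T^0$ for $\nabla T^1$. The principal obstacle is precisely this split-type comparison, together with the special case where $L$ is an $\ee$-summand of $N$ of type $(I)$ (possible only in the non-split types): there $L \simeq \nabla L$ appears only once in $N$ while $L \oplus \nabla L = L \oplus L$ appears twice on the Ext-side, so the induction step must be recast as $L \oplus Y \degg \overline{N}$ and closed with an extra reduction absorbing the surplus copy of $L$.
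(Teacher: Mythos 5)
Your strategy mirrors the paper's: pick a $\preceq$-minimal indecomposable summand $L$ of $N$, embed it isotropically into $M$ by Proposition~\ref{Prop:KeyLemma}, pass to the generic $\ee$-subquotient $Y$ via Theorem~\ref{Thm:IsoDeg}, show $Y\degg\overline{N}$, and conclude by induction. However, both of your justifications for the hypotheses of Proposition~\ref{Prop:KeyLemma} in the non-split types are flawed. There is no ``closedness'' argument giving condition~\eqref{Eqn:Hypt2KeyLemma}: the property of surjecting onto $\nabla L$ is neither open nor closed in a representation variety, and degenerations can both create and destroy surjections. The actual tool is Theorem~\ref{Thm:Bongartz}(ii), whose hypothesis $[Q,\nabla L]=[\nabla L\oplus\overline{N},\nabla L]$ the paper obtains from $[Q,\nabla L]^1\leq[N,\nabla L]^1=[L,N]^1=0$ together with the Euler form~\eqref{Eq:HomologicalInterpretationEulerForm}. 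For condition~\eqref{Eqn:Hypt3KeyLemma}, it is not true that every indecomposable summand of the generic kernel $K$ lies strictly $\preceq$-above $L$: the proof of Proposition~\ref{Prop:KeyLemma} shows that $K$ contains the complement $\overline{M}$ as a direct summand, and $\overline{M}$ may have summands below or incomparable with $L$ in $\Gamma_\QQ$; your appeal to the Auslander--Reiten formula therefore has no purchase. The paper applies Theorem~\ref{Thm:Bongartz} a second time to get $K\degg\overline{N}$ and then upper semicontinuity yields $[L,K]^1\leq[L,\overline{N}]^1=0$.

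Moreover, the two steps you yourself flag as the obstacles are precisely where the real work lies, and neither is carried out. The split-type proof of $Y\degg\overline{N}$ occupies the bulk of the paper's argument: after reducing, via Corollary~\ref{Cor:EquivalenceOrdersDynkin}, to verifying $[Y,E]\leq[\overline{N},E]$ for every indecomposable $E$, one needs the commutative diagrams~\eqref{Eq:MainThmCommDiag} and~\eqref{Eq:MainThmCommDiagHom}, a delicate case analysis of the quantities $[Q,E]$, $[\nabla Q,E]$, $[M,E]$, $[X,E]$, the evenness result Proposition~\ref{prop:DeltaEven} for $\delta$-fixed $E$, and a geometric analysis locating $E$ inside the $\nabla$-invariant square~\eqref{Eq:SquareMainThm}. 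Finally, the case $L\simeq\nabla L$ is treated quite differently (and more cleanly) in the paper than the ``absorbing a surplus copy of $L$'' you gesture at: one shows that the relevant $\ee$-indecomposable $E$ (equal to $L$ in the non-split types and to $L\oplus\nabla L$ in the split types) satisfies $[E,N]^1=[N,E]^1=0$, whence by Theorem~\ref{Thm:Bongartz} it splits off $M$ as a direct summand, and the inductive hypothesis applies directly to $M/E\degg\overline{N}$.
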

\begin{proof}
The implication $\xymatrix@1@C=10pt{\degg&\degg^\ee\ar@{=>}[l]}$ is obvious since $\G_\mathbf{d}^\ee$ is a subgroup of $\GL^\bullet(V)$, and the implication $\xymatrix@1@C=10pt{\degg^\ee&\leq_{\Ext}^\ee\ar@{=>}[l]}$ holds in general by Corollary~\ref{cor:ExtDeg}. We hence prove $\xymatrix@1@C=10pt{\degg\ar@{=>}[r]&\leq_{\Ext}^\ee}.$ Let $M,N\in R_\mathbf{d}^\ee$ such that $M\degg N$. If $M$ and $N$ are isomorphic as $\QQ$-representations then they are isomorphic as $\ee$-representations by Theorem~\ref{thm:isoclasses}. In the rest of the proof we use the symbol $\simeq$ to denote that two $\ee$-representations or two $\QQ$-representations are isomorphic.
\\[1ex]
We hence assume that $M\lneq_{\deg} N$. Since $\Rep(\QQ)$ is representation-directed, there exists an indecomposable direct summand $L$ of $N$ such that $[L,N]^1=0$. Then $[L,M]^1\leq [L,N]^1=0$.   By \eqref{Eq:HomologicalInterpretationEulerForm} we get that $[L,N]=[L,M]$. By  Theorem~\ref{Thm:Bongartz},  $L$ embeds into $M$. Let $\iota:L\hookrightarrow M$ be a generic embedding and let $Q=M/\iota(L)$ be the (generic) quotient. 
\\[1ex]
Let us discuss, first, the case when $L\simeq \nabla L$. 
Then $N\simeq E\oplus X$ where $E=L$ (in the non-split types) or $E=L\oplus \nabla L$ (in the split types) is an indecomposable $\ee$--representation and $X$ is an $\ee$--representation. We have $[E,N]^1=0$ and also $[N,E]^1=0$ by duality. By Theorem~\ref{Thm:Bongartz}, $E$ embeds into $M$ and the generic quotient $M/E$ degenerates to $X$; it then follows that $[M/E,E]^1\leq [X,E]^1=0$ and thus $M\simeq E\oplus M/E$. Then $M/E$ is an $\ee$--representation and it degenerates to $X$. By  induction, we get that $M/E\leq^\ee_{\Ext} X$ and hence $M\simeq E\oplus M/E\leq_{\Ext}^\ee N$. 
\\[1ex]
We assume that $L\not\simeq \nabla L$. Then $N\simeq L\oplus \nabla L\oplus X$ where $X$ is an $\ee$--representation. 
\\[1ex]
In the split types the generic embedding $\iota$ is isotropic by Lemma~\ref{Lem:IsoSplitTypes}. 
\\[1ex]
In the non-split types,  we claim that we can apply Proposition~\ref{Prop:KeyLemma}. Indeed, by Theorem~\ref{Thm:Bongartz}, $Q\degg X\oplus \nabla L$. Then $[Q,\nabla L]^1\leq [X\oplus\nabla L,\nabla L]^1\leq [N,\nabla L]^1=[L,N]^1=0$ and hence $[Q,\nabla L]=[X\oplus\nabla L,\nabla L]$. Again by  Theorem~\ref{Thm:Bongartz}, since there is a surjection from $X\oplus \nabla L$ onto $\nabla L$, there is a surjection from $Q$ onto $\nabla L$, too. Let $K$ be the generic kernel of $\xymatrix@1{Q\ar@{->>}[r]&\nabla L}$; then, again by Theorem~\ref{Thm:Bongartz}, $K\degg X$ and hence $[L,K]^1\leq [L,X]^1=0$. We hence see that Proposition~\ref{Prop:KeyLemma} applies and we can thus assume that a generic embedding $\iota:L\hookrightarrow M$ is isotropic. 
\\[1ex]
We denote by $Y$ the generic $\ee$-subquotient of $M$ by $L$. We claim that 
\begin{equation}\label{Eq:MainThmMainClaim}
Y\degg X.
\end{equation}
If the claim holds then the proof finishes by induction as follows: 
$$
M\leq_{\Ext}^\ee L\oplus Y\oplus \nabla L\leq_{\Ext}^\ee L\oplus X\oplus \nabla L=N
$$
(the first inequality holds by Theorem~\ref{Thm:IsoDeg}, and the second is given by induction since $Y$ and $X$ are $\ee$-representations of smaller dimension.)
\\[1ex]
It remains to prove \eqref{Eq:MainThmMainClaim}. 
\\[1ex]
In the non-split types the claim is true since, by Proposition~\ref{Prop:ESubquot}, we can choose $\iota$ so that $Y$ is the generic quotient of $\nabla Q$ by $L$; thus, since $\nabla Q\degg L\oplus X$ and $[L,\nabla Q]=[L,L\oplus X]$, by Theorem~\ref{Thm:Bongartz} we get $Y\degg X$. 
\\[1ex]
We prove \eqref{Eq:MainThmMainClaim} in the split types. We write $M=T^0\oplus\nabla T^0\oplus\overline{M}$, where $T^0$ is the multiplicity-free direct summand of $M$ consisting of $\preceq$-minimal direct summands which lie in the support of $\Hom(L,-)$. By Proposition~\ref{Prop:GenQuot}, $Q\simeq T^1\oplus\nabla T^0\oplus\overline{M}$ and $\nabla Q\simeq T^0\oplus\nabla T^1\oplus\overline{M}$. By Proposition~\ref{Prop:ESubquot},  $Y=T^1\oplus\nabla T^1\oplus \overline{M}$. Thus there is a commutative diagram with exact rows and columns
\begin{equation}\label{Eq:MainThmCommDiag}
\xymatrix@R=10pt@C=10pt{
&&0\ar[d]&0\ar[d]&\\
0\ar[r]&\ar[r]L\ar@{=}[d]&\ar[r]\nabla Q=T^0\oplus \nabla T^1\oplus\overline{M}\ar[d]&\ar[r]Y=T^1\oplus\nabla T^1\oplus\overline{M}\ar[d]&0\\
0\ar[r]&L\ar[r]&\ar[r]M=T^0\oplus \nabla T^0\oplus\overline{M}\ar[d]&Q=T^1\oplus\nabla T^0\oplus\overline{M}\ar[d]\ar[r]&0\\
&&\nabla L\ar[d]\ar@{=}[r]&\nabla L\ar[d]&\\
&&0&0&
}
\end{equation}
\noindent
In view of Theorem~\ref{Cor:EquivalenceOrdersDynkin} it is enough to prove that 
\begin{equation}\label{Eq:MainThmMainClaimReformulated}
[Y,E]\leq [X,E]\qquad \forall E\in\textrm{ind}(\QQ).
\end{equation}
If $[L,E]=0$, then $[Y,E]=[\nabla Q,E]\leq [X\oplus L,E]=[X,E]$.
\\[1ex]
If $[\nabla L,E]^1=0$ then $[Y,E]\leq[Q,E]-[\nabla L,E]\leq [X,E]$. 
\\[1ex] Thus we can assume that $[L,E]=1=[\nabla L,E]^1$ (implying that $[\nabla L,E]=0$). In this case, there is a commutative diagram with exact rows and columns
\begin{equation}\label{Eq:MainThmCommDiagHom}
\xymatrix{
          &0\ar[d]                         &0\ar[d]               &\\
0\ar[r]&\Hom(Q,E)\ar[r]\ar[d]&\Hom(M,E)\ar^{h_3}[r]\ar^{h_1}[d]&\Hom(L,E)=\mathrm{k}\ar@{=}[d]\\
0\ar[r]&\Hom(Y,E)\ar[r]\ar^{k_2}[d]&\Hom(\nabla Q,E)\ar^{h_2}[r]\ar^{k_1}[d]&\Hom(L,E)=\mathrm{k}\\
&\Ext^1(\nabla L,E)\ar@{=}[r]&\Ext^1(\nabla L,E)=\mathrm{k}&
}
\end{equation}
We get $[Y,E]\leq [Q,E]+1\leq [X,E]+1$. Thus we can assume that $[Q,E]=[X,E]$. Similarly, we get  $[Y,E]\leq [\nabla Q,E]\leq [X,E]+1$. Thus we can assume that $[\nabla Q,E]=[X,E]+1$.
If $[\nabla Q, E]=[M,E]$ then $h_1$ is surjective; this implies that  $k_1=0$ and hence $k_2=0$. Thus, $[Y,E]=[Q,E]= [X,E]$ in this case.  If  $[Q,E]<[M,E]$ then $h_3$ is surjective; this implies that $h_2$ is surjective and hence $[Y,E]=[\nabla Q,E]-1=[X,E]$.  It remains to treat the case when $E$ is such that 
\begin{equation}\label{Eq:AssumptionsMainThm}
\begin{array}{cc}
[Q,E]=[M,E]=[X,E],&[\nabla Q,E]=[M,E]+1=[X,E]+1.
\end{array}
\end{equation}
We claim that under the assumptions \eqref{Eq:AssumptionsMainThm} there exists an index $i$ such that $E$ is contained in the $\nabla$-invariant square 
\begin{equation}\label{Eq:SquareMainThm}
\vcenter{\xymatrix{
&G_1\ar@{..>}[dr]&\\
\nabla T^1_i\ar@{..>}[ur]\ar@{..>}[dr]&&T^1_i\\
&G_2\ar@{..>}[ur]&
}}
\end{equation}
and moreover $G_1\not\preceq E$ and $G_2\not\preceq E$ (i.e. $E$ does not lie in the sectional paths ending in $T^1_i$). To prove this claim it is enough to prove that $[\nabla T^1,E]\neq 0$ and $[T^1,E]=[T^0,E]=0$. We have $[\nabla T^1,E]\neq 0$, since $[\nabla Q,E]>[M,E]$. To prove that  $[T^1,E]=[T^0,E]=0$ we consider the short exact sequence 
$$
0\rightarrow L\rightarrow T^0\rightarrow T^1\rightarrow 0.
$$
If $[T^1,E]\neq 0$ then $[T^1,E]^1=0$ (this is because  $T^1_i\not\preceq T^1_j$ for $i\neq j$, or because $T^1$ is part of a separating tilting object, see Remark~\ref{Rem:Tilting}) and hence $[T^0,E]=[T^1,E]+[L,E]>[T^1,E]$ which contradicts the hypothesis $[Q,E]=[M,E]$. Thus, $[T^1,E]=[T^0,E]=0$ and hence both $E$ and $\tau\nabla E$ are contained in the square \eqref{Eq:SquareMainThm}. If $E=\tau\nabla E$, i.e. if $E$ is $\delta$-invariant, then $\delta_{Y,X}(E)=[X,E]-[Y,E]$ is even by Proposition~\ref{prop:DeltaEven}. Then  $[Y,E]\neq [X,E]+1$ and hence $[Y,E]\leq [X,E]$. Suppose that $\tau\nabla E\neq E$. Without loss of generality we can assume that $\tau\nabla E\prec E$, since $\delta_{Y,E}(E)=\delta_{Y,X}(\tau\nabla E)$ by Lemma~\ref{Lem:DeltaSelfDual}. The rectangle from $\tau\nabla E$ to $E$ is a square
$$
\xymatrix{
&F_1\ar@{..>}[dr]&\\
\tau \nabla E\ar@{..>}[ur]\ar@{..>}[dr]&&E\\
&F_2\ar@{..>}[ur]&
}
$$
and its middle vertices $F_1$ and $F_2$ are $\delta$-invariants. This square is all contained in the rectangle \eqref{Eq:SquareMainThm} and gives rise to a short exact sequence 
$$
0\rightarrow \tau\nabla E\rightarrow F=F_1\oplus F_2\rightarrow E\rightarrow 0.
$$
The only indecomposable direct summand of $Y$ which could belong to this square is $\nabla T^1_i=\tau\nabla E$, since, if there are others then there would be a $\preceq$-minimal direct summands of $\overline{M}$ belonging to the support of $\Hom(L,-)$, against the definition of $T^0$. This implies that the induced map $\Hom(Y,F)\rightarrow\Hom(Y,E)$ is surjective and hence we have
$$
[Y,F]=[Y,\tau\nabla E]+[Y,E].
$$ 
Thus
$$
[Y,\tau\nabla E]+[Y,E]=[Y,F]\leq [X,F]\leq [X,\tau\nabla E]+[X,E]
$$
which implies by Lemma~\ref{Lem:DeltaSelfDual}
$$
2\delta_{Y,X}(E)\geq0
$$
proving the claim.
\end{proof}
\begin{rem}\label{Rk:GenericSubquotient}
In the proof of Theorem~\ref{Thm:MainThm} we see that if $M$ is an $\ee$-representation which degenerates to an $\ee$-representation of the form $L\oplus\nabla L\oplus X$ and $[L,N]=[L,M]$ then the generic $\ee$-subquotient of $M$ by $L$ degenerates to $X$. This suggests the existence of a ``cancellation theorem'' for isotropic subrepresentations. 
\end{rem}

\section{Examples}\label{Sec:Example}
\noindent
In this section we illustrate the proof of Theorem~\ref{Thm:MainThm} in some examples. Given two $\ee$-representations $M$, $N$ of a symmetric quiver of type $A$ such that $M\degg N$ we find a sequence $M=M(0), M(1), \cdots, M(h)=N$ of $\ee$-representations such that there is a one parameter subgroup $\lambda_i(t)\in\G_\mathbf{d}^\ee$ such that $\lim_{t\rightarrow 0}\lambda_i(t)\cdot M(i)=M(i+1)$. We write $\xymatrix@C=10pt{M(i)\ar@{=>}[r]&M(i+1)}$.  To construct $M(i+1)$ from $M(i)$ we follow the strategy of the proof of Theorem~\ref{Thm:MainThm}: we choose an indecomposable direct summand $L$ of $N$ such that $[L,N]=[L,M(i)]$ and we find the generic $\ee$-subquotient  of $M(i)$ by $L$ described in Proposition~\ref{Prop:ESubquot}. Then Theorem~\ref{Thm:IsoDeg} provides the aforementioned  one-parameter subgroup. If $L$ is a direct summand of both $N$ and $M(i)$ then we can remove it from both. The representations $M(i)$'s are described by the multiplicities of their direct summands. At  step $i$ we highlight as $\xymatrix{*+[F]{\bullet}}$ the indecomposable direct summand $L$ of $N$ used to construct $M(i+1)$ from $M(i)$. Different choices of $L$ give rise to different degeneration paths from $M$ to $N$.
\begin{example} 
Let $\QQ$ be $\vcenter{\xymatrix@R=5pt@C=5pt{1\ar[dr]&&3\ar[dr]\ar[dl]&\\&2&&4\\}}$ and $\ee=1$ (split type). ($\Gamma_\QQ$ is shown in Example~\ref{Ex:ARQuiverAEvenAltern}.)
$$
\def\g#1{\save [].[dddrrrr]!C="g#1"*+<5pt>[F-:<2pt>]\frm{}\restore}%
\xymatrix@R=2pt@C=2pt{
\g1 M&\bullet&&\bullet& && \g2\phantom{M}&1&&1& && \g3 N&1\ar@{-}[dr]&&1&        \\
*+[F]{\bullet}\ar@{-}[ur]&&4\ar@{-}[ur]\ar@{-}[ul]&&\bullet\ar@{-}[ul]              &&  1\ar@{-}[ur]&&2\ar@{-}[ur]\ar@{-}[ul]&&1\ar@{-}[ul]  &&
1\ar@{-}[ur]&&\bullet\ar@{-}[ur]\ar@{-}[dl]&&1\ar@{-}[ul]           \\
&\bullet\ar@{-}[ur]\ar@{-}[ul]&&\bullet\ar@{-}[ur]\ar@{-}[ul]&                    &&  &*+[F]{\bullet}\ar@{-}[ur]\ar@{-}[ul]&&\bullet\ar@{-}[ur]\ar@{-}[ul]&    &&
&1\ar@{-}[ur]\ar@{-}[ul]&&1\ar@{-}[ur]\ar@{-}[ul]&           \\
  \ar@{-}[ur]\bullet &&\bullet\ar@{-}[ur]\ar@{-}[ul]&&\bullet\ar@{-}[ul]        && 1\ar@{-}[ur]&&\bullet\ar@{-}[ur]\ar@{-}[ul]&&1\ar@{-}[ul] && 
 2\ar@{-}[ur]&&\bullet\ar@{-}[ur]\ar@{-}[ul]&&2\ar@{-}[ul]      
\ar @{=>} "g1"; "g2"
\ar @{=>} "g2"; "g3"
}
$$
\end{example}

\begin{example} 
Let $\QQ$ be $\vcenter{\xymatrix@R=5pt@C=5pt{1\ar[dr]&&3\ar[dr]\ar[dl]&\\&2&&4\\}}$ and $\ee=-1$ (non-split type). 
$$
\def\g#1{\save [].[dddrrrr]!C="g#1"*+<5pt>[F-:<2pt>]\frm{}\restore}%
\xymatrix@R=2pt@C=2pt{
\g1 M&\bullet&&\bullet& && \g2\phantom{M}&\bullet&&\bullet& && \g3\phantom{M}&*+[F]{\bullet}&&\bullet&     && \g4 N&1&&1& \\
*+[F]{\bullet}\ar@{-}[ur]&&4\ar@{-}[ur]\ar@{-}[ul]&&\bullet\ar@{-}[ul]              &&  1\ar@{-}[ur]&&3\ar@{-}[ur]\ar@{-}[ul]&&1\ar@{-}[ul]  &&
1\ar@{-}[ur]&&1\ar@{-}[ur]\ar@{-}[ul]&&1\ar@{-}[ul]     &&         1\ar@{-}[ur]&&\bullet\ar@{-}[ur]\ar@{-}[ul]&&1\ar@{-}[ul]              \\
&\bullet\ar@{-}[ur]\ar@{-}[ul]&&\bullet\ar@{-}[ur]\ar@{-}[ul]&                    &&  &*+[F]{\bullet}\ar@{-}[ur]\ar@{-}[ul]&&\bullet\ar@{-}[ur]\ar@{-}[ul]&    &&
&1\ar@{-}[ur]\ar@{-}[ul]&&1\ar@{-}[ur]\ar@{-}[ul]&         &&      &1\ar@{-}[ur]\ar@{-}[ul]&&1\ar@{-}[ur]\ar@{-}[ul]&          \\
  \ar@{-}[ur]\bullet &&\bullet\ar@{-}[ur]\ar@{-}[ul]&&\bullet\ar@{-}[ul]        &&  1\ar@{-}[ur]&&\bullet\ar@{-}[ur]\ar@{-}[ul]&&1\ar@{-}[ul] && 
 2\ar@{-}[ur]&&\bullet\ar@{-}[ur]\ar@{-}[ul]&&2\ar@{-}[ul]     &&     \ar@{-}[ur] 2 &&\bullet\ar@{-}[ur]\ar@{-}[ul]&&2\ar@{-}[ul]        
\ar @{=>} "g1"; "g2"
\ar @{=>} "g2"; "g3"
\ar @{=>} "g3"; "g4" 
}
$$
\end{example}

\begin{example}
Let $\QQ$ be of type $\stackrel{\rightarrow}{A_5}$ and $\ee=-1$ (split type):
$$
\def\g#1{\save [].[ddddrrrrrrrr]!C="g#1"*+<5pt>[F-:<2pt>]\frm{}\restore}
\xymatrix@R=2pt@C=2pt{
\g1 M&&&&*{6}\ar@{-}[dr]&&&&  &&&&   \g2\phantom{M}&&&&4\ar@{-}[dr]&&&&  \\
&&&\bullet\ar@{-}[ur]\ar@{-}[dr]&&\bullet\ar@{-}[dr]&&&   &&&&     &&&1\ar@{-}[ur]\ar@{-}[dr]&&1\ar@{-}[dr]&&& \\
&&\bullet\ar@{-}[ur]\ar@{-}[dr]&&\bullet\ar@{-}[ur]\ar@{-}[dr]&&\bullet\ar@{-}[dr]&&  &&&&   &&\bullet\ar@{-}[ur]\ar@{-}[dr]&&\bullet\ar@{-}[ur]\ar@{-}[dr]&&\bullet\ar@{-}[dr]&& \\
&\bullet\ar@{-}[ur]\ar@{-}[dr]&&\bullet\ar@{-}[ur]\ar@{-}[dr]&&\bullet\ar@{-}[ur]\ar@{-}[dr]&&\bullet\ar@{-}[dr]& &&&& &*+[F]{\bullet}\ar@{-}[ur]\ar@{-}[dr]&&\bullet\ar@{-}[ur]\ar@{-}[dr]&&\bullet\ar@{-}[ur]\ar@{-}[dr]&&\bullet\ar@{-}[dr]& \\
*+[F]{\bullet}\ar@{-}[ur]&&\bullet\ar@{-}[ur]&&\bullet\ar@{-}[ur]&&\bullet\ar@{-}[ur]&&\bullet		&&&&		 1\ar@{-}[ur]&&\bullet\ar@{-}[ur]&&\bullet\ar@{-}[ur]&&\bullet\ar@{-}[ur]&&1
\\
&&&&&&&& &&&& &&&&&&&& \\
\g5 N&&&&2\ar@{-}[dr]&&&&        &&&&  \g3\phantom{M}&&&&4\ar@{-}[dr]&&&&      \\
&&&1\ar@{-}[ur]\ar@{-}[dr]&&1\ar@{-}[dr]&&&        &&&& &&&\bullet\ar@{-}[ur]\ar@{-}[dr]&&\bullet\ar@{-}[dr]&&&       \\
&&1\ar@{-}[ur]\ar@{-}[dr]&&\bullet\ar@{-}[ur]\ar@{-}[dr]&&1\ar@{-}[dr]&&    
&&&&   &&*+[F]{\bullet}\ar@{-}[ur]\ar@{-}[dr]&&\bullet\ar@{-}[ur]\ar@{-}[dr]&&\bullet\ar@{-}[dr]&&     \\
&1\ar@{-}[ur]\ar@{-}[dr]&&\bullet\ar@{-}[ur]\ar@{-}[dr]&&\bullet\ar@{-}[ur]\ar@{-}[dr]&&1\ar@{-}[dr]&         &&&&   &1\ar@{-}[ur]\ar@{-}[dr]&&1\ar@{-}[ur]\ar@{-}[dr]&&1\ar@{-}[ur]\ar@{-}[dr]&&1\ar@{-}[dr]&     \\
1\ar@{-}[ur]&&\bullet\ar@{-}[ur]&&\bullet\ar@{-}[ur]&&\bullet\ar@{-}[ur]&&1
&&&&	1\ar@{-}[ur]&&\bullet\ar@{-}[ur]&&\bullet\ar@{-}[ur]&&\bullet\ar@{-}[ur]&&1	
\ar @{=>} "g1"; "g2"
\ar @{=>} "g2"; "g3" <20pt>
{\ar @{=>} "g3"; "g5" }
}
$$
\end{example}
\begin{example}
Let $\QQ$ be of type $\stackrel{\rightarrow}{A_5}$ and $\ee=1$ (non-split type):
$$
\def\g#1{\save [].[ddddrrrrrrrr]!C="g#1"*+<5pt>[F-:<2pt>]\frm{}\restore}
\xymatrix@R=2pt@C=1pt{
\g1 M&&&&*{6}\ar@{-}[dr]&&&&  &&   \g2\phantom{M}&&&&5\ar@{-}[dr]&&&&  &&    \g3\phantom{M}&&&&4\ar@{-}[dr]&&&&\\
&&&\bullet\ar@{-}[ur]\ar@{-}[dr]&&\bullet\ar@{-}[dr]&&&   &&     &&&\bullet\ar@{-}[ur]\ar@{-}[dr]&&\bullet\ar@{-}[dr]&&& &&    &&&\bullet\ar@{-}[ur]\ar@{-}[dr]&&\bullet\ar@{-}[dr]&&&\\
&&\bullet\ar@{-}[ur]\ar@{-}[dr]&&\bullet\ar@{-}[ur]\ar@{-}[dr]&&\bullet\ar@{-}[dr]&&  &&   &&\bullet\ar@{-}[ur]\ar@{-}[dr]&&1\ar@{-}[ur]\ar@{-}[dr]&&\bullet\ar@{-}[dr]&& &&   &&*+[F]{\bullet}\ar@{-}[ur]\ar@{-}[dr]&&1\ar@{-}[ur]\ar@{-}[dr]&&\bullet\ar@{-}[dr]&&\\
&\bullet\ar@{-}[ur]\ar@{-}[dr]&&\bullet\ar@{-}[ur]\ar@{-}[dr]&&\bullet\ar@{-}[ur]\ar@{-}[dr]&&\bullet\ar@{-}[dr]& && &*+[F]{\bullet}\ar@{-}[ur]\ar@{-}[dr]&&\bullet\ar@{-}[ur]\ar@{-}[dr]&&\bullet\ar@{-}[ur]\ar@{-}[dr]&&\bullet\ar@{-}[dr]& && &1\ar@{-}[ur]\ar@{-}[dr]&&\bullet\ar@{-}[ur]\ar@{-}[dr]&&\bullet\ar@{-}[ur]\ar@{-}[dr]&&1\ar@{-}[dr]&\\
*+[F]{\bullet}\ar@{-}[ur]&&\bullet\ar@{-}[ur]&&\bullet\ar@{-}[ur]&&\bullet\ar@{-}[ur]&&\bullet		&&		 1\ar@{-}[ur]&&\bullet\ar@{-}[ur]&&\bullet\ar@{-}[ur]&&\bullet\ar@{-}[ur]&&1
&&		 1\ar@{-}[ur]&&\bullet\ar@{-}[ur]&&1\ar@{-}[ur]&&\bullet\ar@{-}[ur]&&1
\\
&&&&&&&& && &&&&&&&& && &&&&&&&&\\
&&&&&&&&      &&           \g5 N&&&&2\ar@{-}[dr]&&&&        &&           \g4 \phantom{M}&&&&3\ar@{-}[dr]&&&&\\
&&&&&&&&       &&         &&&1\ar@{-}[ur]\ar@{-}[dr]&&1\ar@{-}[dr]&&&        &&                   &&&*+[F]{\bullet}\ar@{-}[ur]\ar@{-}[dr]&&\bullet\ar@{-}[dr]&&&\\
&&&&&&&&     &&        &&1\ar@{-}[ur]\ar@{-}[dr]&&\bullet\ar@{-}[ur]\ar@{-}[dr]&&1\ar@{-}[dr]&&    
&&   
&&1\ar@{-}[ur]\ar@{-}[dr]&&1\ar@{-}[ur]\ar@{-}[dr]&&1\ar@{-}[dr]&&\\
&&&&&&&&      &&         &1\ar@{-}[ur]\ar@{-}[dr]&&\bullet\ar@{-}[ur]\ar@{-}[dr]&&\bullet\ar@{-}[ur]\ar@{-}[dr]&&1\ar@{-}[dr]&         &&               &1\ar@{-}[ur]\ar@{-}[dr]&&\bullet\ar@{-}[ur]\ar@{-}[dr]&&\bullet\ar@{-}[ur]\ar@{-}[dr]&&1\ar@{-}[dr]&\\
&&&&&&&&	&&		 1\ar@{-}[ur]&&\bullet\ar@{-}[ur]&&\bullet\ar@{-}[ur]&&\bullet\ar@{-}[ur]&&1
&&		1\ar@{-}[ur]&&\bullet\ar@{-}[ur]&&\bullet\ar@{-}[ur]&&\bullet\ar@{-}[ur]&&1
\\
\ar @{=>} "g1"; "g2"
\ar @{=>} "g2"; "g3"
{\ar @{=>} "g3"; "g4" <20pt>}
\ar @{=>} "g4"; "g5"
}
$$
\end{example}
\begin{example}
Let $\QQ$ be 
$\vcenter{\xymatrix@R=5pt@C=5pt{&2\ar[dl]\ar[dr]&&&\\1&&3\ar[dr]&&5\ar[dl]\\&&&4&}}$ and $\ee=1$ (non-split type). ($\Gamma_\QQ$ is shown in Example~\ref{Ex:ARQuiverAOddAltern}.) 
$$
\def\g#1{\save [].[ddddrrrrrr]!C="g#1"*+<5pt>[F-:<2pt>]\frm{}\restore}
\xymatrix@R=2pt@C=2pt{
\g1 M&\bullet\ar@{-}[dr]&&\bullet\ar@{-}[dr]\ar@{-}[dl]&&\bullet\ar@{-}[dl]&                                    &&      \g2 \phantom{M}&1\ar@{-}[dr]&&\bullet\ar@{-}[dr]\ar@{-}[dl]&&1\ar@{-}[dl]&                                       &&
\g3 \phantom{M}&1\ar@{-}[dr]&&\bullet\ar@{-}[dr]\ar@{-}[dl]&&1\ar@{-}[dl]&                                  \\
&&\bullet\ar@{-}[dr]\ar@{-}[dl]&&\bullet\ar@{-}[dr]\ar@{-}[dl]&&                                                       &&      &&\bullet\ar@{-}[dr]\ar@{-}[dl]&&\bullet\ar@{-}[dr]\ar@{-}[dl]&&                                                       &&
&&\bullet\ar@{-}[dr]\ar@{-}[dl]&&\bullet\ar@{-}[dr]\ar@{-}[dl]&&                                                  \\
&\bullet\ar@{-}[dr]\ar@{-}[dl]&&4\ar@{-}[dr]\ar@{-}[dl]&&\bullet\ar@{-}[dr]\ar@{-}[dl]&             &&      &\bullet\ar@{-}[dr]\ar@{-}[dl]&&4\ar@{-}[dr]\ar@{-}[dl]&&\bullet\ar@{-}[dr]\ar@{-}[dl]&            &&
&\bullet\ar@{-}[dr]\ar@{-}[dl]&&3\ar@{-}[dr]\ar@{-}[dl]&&\bullet\ar@{-}[dr]\ar@{-}[dl]&        \\
*+[F]{\bullet}\ar@{-}[dr]&&\bullet\ar@{-}[dr]\ar@{-}[dl]&&\bullet\ar@{-}[dr]\ar@{-}[dl]&&\bullet\ar@{-}[dl]   &&      1\ar@{-}[dr]&&\bullet\ar@{-}[dr]\ar@{-}[dl]&&\bullet\ar@{-}[dr]\ar@{-}[dl]&&1\ar@{-}[dl]   &&
1\ar@{-}[dr]&&*+[F]{\bullet}\ar@{-}[dr]\ar@{-}[dl]&&\bullet\ar@{-}[dr]\ar@{-}[dl]&&1\ar@{-}[dl]  \\
&1&&\bullet&&1&                                                                                                                &&     &*+[F]{\bullet}&&\bullet&&\bullet&   &&    &1&&1&&1&\\
&&&&&& && &&&&&& && &&&&&& && \\
\g6 N&3\ar@{-}[dr]&&2\ar@{-}[dr]\ar@{-}[dl]&&3\ar@{-}[dl]&                                    &&      \g5 \phantom{M}&*+[F]{2}\ar@{-}[dr]&&1\ar@{-}[dr]\ar@{-}[dl]&&2\ar@{-}[dl]&                                       &&
\g4 \phantom{M}&*+[F]{1}\ar@{-}[dr]&&\bullet\ar@{-}[dr]\ar@{-}[dl]&&1\ar@{-}[dl]&                                  \\
&&\bullet\ar@{-}[dr]\ar@{-}[dl]&&\bullet\ar@{-}[dr]\ar@{-}[dl]&&                                                       &&      &&\bullet\ar@{-}[dr]\ar@{-}[dl]&&\bullet\ar@{-}[dr]\ar@{-}[dl]&&                                                       &&
&&\bullet\ar@{-}[dr]\ar@{-}[dl]&&\bullet\ar@{-}[dr]\ar@{-}[dl]&&                                                  \\
&\bullet\ar@{-}[dr]\ar@{-}[dl]&&\bullet\ar@{-}[dr]\ar@{-}[dl]&&\bullet\ar@{-}[dr]\ar@{-}[dl]&             &&      &\bullet\ar@{-}[dr]\ar@{-}[dl]&&1\ar@{-}[dr]\ar@{-}[dl]&&\bullet\ar@{-}[dr]\ar@{-}[dl]&            &&
&\bullet\ar@{-}[dr]\ar@{-}[dl]&&2\ar@{-}[dr]\ar@{-}[dl]&&\bullet\ar@{-}[dr]\ar@{-}[dl]&        \\
1\ar@{-}[dr]&&1\ar@{-}[dr]\ar@{-}[dl]&&1\ar@{-}[dr]\ar@{-}[dl]&&1\ar@{-}[dl]   &&      1\ar@{-}[dr]&&1\ar@{-}[dr]\ar@{-}[dl]&&1\ar@{-}[dr]\ar@{-}[dl]&&1\ar@{-}[dl]   &&
1\ar@{-}[dr]&&1\ar@{-}[dr]\ar@{-}[dl]&&1\ar@{-}[dr]\ar@{-}[dl]&&1\ar@{-}[dl]  \\
&1&&\bullet&&1&                                                                                                                &&     &1&&\bullet&&1&   &&    &1&&\bullet&&1&\\
\ar @{=>} "g1"; "g2"
\ar @{=>} "g2"; "g3"
\ar @{=>} "g3"; "g4" <10pt>
\ar @{=>} "g4"; "g5"
\ar @{=>} "g5"; "g6"
}
$$
\end{example}
\begin{example}
Let $\QQ$ be 
$\vcenter{\xymatrix@R=5pt@C=5pt{&2\ar[dl]\ar[dr]&&&\\1&&3\ar[dr]&&5\ar[dl]\\&&&4&}}$ and $\ee=-1$ (split type). 
$$
\def\g#1{\save [].[ddddrrrrrr]!C="g#1"*+<5pt>[F-:<2pt>]\frm{}\restore}%
\xymatrix@R=2pt@C=2pt{
\g1 M&\bullet\ar@{-}[dr]&&\bullet\ar@{-}[dr]\ar@{-}[dl]&&\bullet\ar@{-}[dl]&                                    &&      \g2 \phantom{M}&1\ar@{-}[dr]&&\bullet\ar@{-}[dr]\ar@{-}[dl]&&1\ar@{-}[dl]&                                       &&
\g3 \phantom{M}&2\ar@{-}[dr]&&\bullet\ar@{-}[dr]\ar@{-}[dl]&&2\ar@{-}[dl]&                                  \\
&&\bullet\ar@{-}[dr]\ar@{-}[dl]&&\bullet\ar@{-}[dr]\ar@{-}[dl]&&                                                       &&      &&\bullet\ar@{-}[dr]\ar@{-}[dl]&&\bullet\ar@{-}[dr]\ar@{-}[dl]&&                                                       &&
&&\bullet\ar@{-}[dr]\ar@{-}[dl]&&\bullet\ar@{-}[dr]\ar@{-}[dl]&&                                                  \\
&\bullet\ar@{-}[dr]\ar@{-}[dl]&&4\ar@{-}[dr]\ar@{-}[dl]&&\bullet\ar@{-}[dr]\ar@{-}[dl]&             &&      &*+[F]{\bullet}\ar@{-}[dr]\ar@{-}[dl]&&4\ar@{-}[dr]\ar@{-}[dl]&&\bullet\ar@{-}[dr]\ar@{-}[dl]&            &&
&1\ar@{-}[dr]\ar@{-}[dl]&&2\ar@{-}[dr]\ar@{-}[dl]&&1\ar@{-}[dr]\ar@{-}[dl]&        \\
*+[F]{\bullet}\ar@{-}[dr]&&1\ar@{-}[dr]\ar@{-}[dl]&&1\ar@{-}[dr]\ar@{-}[dl]&&\bullet\ar@{-}[dl]   &&      1\ar@{-}[dr]&&\bullet\ar@{-}[dr]\ar@{-}[dl]&&\bullet\ar@{-}[dr]\ar@{-}[dl]&&1\ar@{-}[dl]   &&
1\ar@{-}[dr]&&1\ar@{-}[dr]\ar@{-}[dl]&&1\ar@{-}[dr]\ar@{-}[dl]&&1\ar@{-}[dl]  \\
&\bullet&&\bullet&&\bullet&                                                                                                                &&     &\bullet&&2&&\bullet&   &&    &*+[F]{\bullet}&&\bullet&&\bullet&\\
&&&&&& && &&&&&& && &&&&&& && \\
&&&&&&                              &&      \g5 N&2\ar@{-}[dr]&&\bullet\ar@{-}[dr]\ar@{-}[dl]&&2\ar@{-}[dl]&                                       &&
\g4 \phantom{M}&2\ar@{-}[dr]&&\bullet\ar@{-}[dr]\ar@{-}[dl]&&2\ar@{-}[dl]&                                  \\
&&&&&&                                                     &&      &&\bullet\ar@{-}[dr]\ar@{-}[dl]&&\bullet\ar@{-}[dr]\ar@{-}[dl]&&                                                       &&
&&\bullet\ar@{-}[dr]\ar@{-}[dl]&&\bullet\ar@{-}[dr]\ar@{-}[dl]&&                                                  \\
&&&&&&             &&      &1\ar@{-}[dr]\ar@{-}[dl]&&\bullet\ar@{-}[dr]\ar@{-}[dl]&&1\ar@{-}[dr]\ar@{-}[dl]&            &&
&1\ar@{-}[dr]\ar@{-}[dl]&&2\ar@{-}[dr]\ar@{-}[dl]&&1\ar@{-}[dr]\ar@{-}[dl]&        \\
&&&&&&   &&      1\ar@{-}[dr]&&1\ar@{-}[dr]\ar@{-}[dl]&&1\ar@{-}[dr]\ar@{-}[dl]&&1\ar@{-}[dl]   &&
1\ar@{-}[dr]&&\bullet\ar@{-}[dr]\ar@{-}[dl]&&\bullet\ar@{-}[dr]\ar@{-}[dl]&&1\ar@{-}[dl]  \\
&&&&&&                                                                                                            &&     &2&&2&&2&   &&    &*+[F]{1}&&2&&1&\\
\ar @{=>} "g1"; "g2"
\ar @{=>} "g2"; "g3"
\ar @{=>} "g3"; "g4" <10pt>
\ar @{=>} "g4"; "g5"
}
$$
\end{example}

\bibliographystyle{amsplain}

\begin{thebibliography}{99}
	
\bibitem{AbeasisDelFraEquioriented}
S.~Abeasis, A.~Del~Fra, \emph{Degenerations for the representations of an equioriented quiver of type $A_m$}, Bull.~Un.~Mat.~Ital.~Suppl. (1980), n.~2, 157-171.
\bibitem{AbeasisDelFra}
S.~Abeasis, A.~Del~Fra, \emph{Degenerations for the representations of a quiver of type $A_m$}, J.~Algebra \textbf{93} (1982), 376-412.

\bibitem{AR}
M. Auslander, I. Reiten,
\emph{Modules determined by their composition factors},
Illinois J. Math. 29 (1985), no. 2, 280-301.
\bibitem{ARS}
M.~Auslander, I.~Reiten, S.~Smalo, \emph{
Representation theory of Artin algebras.} Cambridge studies in advanced mathematics, 36. 1995.
\bibitem{AssemSkowronski}
I.~Assem, A.~Skowro\'nski, \emph{Minimal representation-finite coil algebras}, Manuscripta Math., \textbf{67} (1990), 305-331.
\bibitem{ASS}
I.~Assem, D.~Simson, A.~Skowronski, \emph{
Elements of the representation theory of associative algebras. Vol. 1.
Techniques of representation theory.} London Mathematical Society Student Texts, 65. Cambridge University Press, Cambridge, 2006.
\bibitem{Bongartz}
K.~Bongartz, \emph{On Degenerations and Extensions of
Finite Dimensional Modules}, Adv. Math. \textbf{121} (1996), 245--287.
\bibitem{Brenner}
S.~Brenner, \emph{A combinatorial characterization of finite Auslander-Reiten quivers}, Lecture Notes in Math., \textbf{1177} (1986).
\bibitem{BC}
M.~Boos, G.~Cerulli~Irelli. \emph{Symmetric degenerations are not in general induced by type A degenerations}. In preparation. 2021.
\bibitem{BCIE}
M.~Boos, G.~Cerulli~Irelli, F.~Esposito. \emph{Parabolic orbits of 2-nilpotent elements for classical groups}, J. Lie Th. 29(4), 2019.
\bibitem{BR}
M.~Butler, M.~Ringel, \emph{Auslander-reiten sequences with few middle terms and applications to string algebras}, Comm. Alg., \textbf{15}, no.1, 145--179.
\bibitem{CB}
W.~Crawley-Boevey, \emph{Lectures on representations of quivers}. Available on the author's webpage.
\bibitem{CFFFR}
G.~Cerulli~Irelli, X.~Fang, E.~Feigin, G.~Fourier, M.~Reineke, \emph{Linear degeneration of flag varieties}. Math. Z. \textbf{287} (2017), 615-654.
\bibitem{CFFFR2}
G.~Cerulli~Irelli, X.~Fang, E.~Feigin, G.~Fourier, M.~Reineke, \emph{Linear degeneration of flag varieties: partial flags, defining equations and group actions}. Math. Z. \textbf{296} (2020), 453-477.

\bibitem{DW}
H.~Derksen and J.~Weyman, \emph{Generalized quivers associated to reductive groups}. Colloq. Math. \textbf{94} (2002), no.~ 2, 151-173.
\bibitem{HR}
D.~Happel and C.~M.~Ringel, \emph{Tilted algebras}. Trans. AMS \textbf{274} (1982), no.~2, 399-443.
\bibitem{K1} V.~G.~Kac, \emph{Some remarks on nilpotent orbits}. J.~Algebra \textbf{64} (1980), 190-213.
\bibitem{K2} V.~G.~Kac, \emph{Infinite root systems, representations of graphs and invariant theory}. Inv.~Math. \textbf{56} (1980), 57-92.
\bibitem{KP}
H.~Kraft, C.~Procesi \emph{On the geometry of conjugacy classes in classical groupes}. Comm. Math. Helvetici \textbf{57} (1982), 539~602.
\bibitem{MWZ}
P.~Magyar, J.~Weyman and A.~Zelevinsky, \emph{Symplectic multiple flag varieties of finite type}. J. Algebra \textbf{230}, 245--265 (2000).
\bibitem{R}
R.~W.~Richardson, \emph{On orbits of algebraic groups and Lie groups}. Bull. Austral. Math. Soc. \textbf{25} (1982), 
1-28.
\bibitem{Riedtmann}
C.~Riedtmann, \emph{Degenerations for representations of quivers with relations}. Ann.~Sc.~ENS, \textbf{19} (1986), 275-301.
\bibitem{Ringel} C.~M.~Ringel, \emph{Tame algebras and integral quadratic forms}. Lecture Notes in Mathematics. \textbf{1099} (1980). Springer.
\bibitem{Schiffler}
R.~Schiffler, \emph{Quiver Representations}. CMS Books in Mathematics (2014). Springer. 
\bibitem{Shm}
D.~A.~Shmelkin, \emph{Signed quivers, symmetric quivers and root systems}. J. Lond.Math. Soc. (2) \textbf{73} (2006), no. 3. 
\bibitem{V}
E.~B.~Vinberg, \emph{The Weyl group of a graded Lie algebras}. Math. USSR-Izv \textbf{10} (1976), 463-495.
\bibitem{Zwara}
G.~Zwara, \emph{Degenerations for modules over representation-finite algebras}. Proc. AMS \textbf{127} (1999), n.5, 1313-1322.
\bibitem{ZwaraSpecialBiserial}
G.~Zwara, \emph{Degenerations for modules over representation-finite biserial algebras}. J.~Algebra \textbf{198} (1997), 563-581.

\end{thebibliography}

\end{document}